\renewcommand{\section}{%
\@startsection{section}{1}%
  \z@{.7\linespacing\@plus\linespacing}{.5\linespacing}%
 {\normalfont\large\bfseries\centering}}
\newtheorem{theorem}{Theorem}[section]
\newtheorem{lemma}[theorem]{Lemma}
\newtheorem{proposition}[theorem]{Proposition}
\newtheorem*{theorem*}{Theorem} 
\newtheorem*{corollary*}{Corollary}
\newtheorem*{conjecture*}{Conjecture}
\newtheorem*{lemma*}{Lemma}
\newtheorem*{proposition*}{Proposition}
\newtheorem*{problem*}{Problem}
\newtheorem*{axiom*}{Axiom}
\newtheorem*{example*}{Example}
\newtheorem*{exercise*}{Exercise}
\theoremstyle{definition}
\newtheorem*{remark*}{Remark}
\newtheorem*{definition*}{Definition}
\renewcommand{\l}{\left}
\renewcommand{\r}{\right}
\newcommand{\eps}{\varepsilon}
\newcommand{\N}{{\mathbb N}}
\newcommand{\R}{{\mathbb R}}
\newcommand{\Z}{{\mathbb Z}}
\newcommand{\re}{{\rm Re}}
\newcommand{\ds}{\displaystyle}
\newcommand{\del}{\partial}
\newcommand{\wto}{\rightharpoonup}
\newcommand{\til}{\widetilde}
\newcommand{\ce}{\mathrel{\mathop:}=} 
\def\norm[#1]{\left\Vert #1 \right\Vert}
\def\tbra[#1,#2]{\left\langle #1 , #2\right\rangle} 
\def\rbra[#1,#2]{\left( #1 , #2 \right)} 
\def\sbra[#1,#2]{\left[ #1 , #2 \right]} 
\newcommand{\scA}{{\mathscr A}}
\newcommand{\scB}{{\mathscr B}}
\newcommand{\scK}{{\mathscr K}}
\newcommand{\scM}{{\mathscr M}}
\newcommand{\cE}{{\mathcal E}}
\newcommand{\cG}{{\mathcal G}}
\newcommand{\cJ}{{\mathcal J}}
\newcommand{\cK}{{\mathcal K}}
\newcommand{\cM}{{\mathcal M}}
\newcommand{\cP}{{\mathcal P}}
\newcommand{\cS}{{\mathcal S}}
\begin{document}

\title[]{Stability of algebraic solitons for nonlinear Schr\"{o}dinger equations of derivative type: variational approach}


\author{Masayuki Hayashi}
\address{Research Institute for Mathematical Sciences, Kyoto University, Kyoto 606-8502, Japan}
\curraddr{}
\email{hayashi@kurims.kyoto-u.ac.jp}
\thanks{}

\subjclass[2010]{Primary 35A15, 35Q51, 35Q55,; Secondary 35B35}
\keywords{derivative nonlinear Schr\"{o}dinger equation, solitons, variational methods, orbital stability}

\date{}

\dedicatory{}


\begin{abstract}
We consider the following nonlinear Schr\"{o}dinger equation of derivative type:
\begin{equation}
\label{eq:1}
i \partial_t u + \partial_x^2 u +i |u|^{2} \partial_x u +b|u|^4u=0 , \quad (t,x) \in \R \times \R, \ b \in \R .
\end{equation}
If $b=0$, this equation is a gauge equivalent form of well-known derivative nonlinear Schr\"{o}dinger (DNLS) equation. 
The soliton profile of the DNLS equation satisfies a certain double power elliptic equation with cubic-quintic nonlinearities. The quintic nonlinearity in \eqref{eq:1} only affects the coefficient in front of the quintic term in the elliptic equation, so the additional nonlinearity is natural as a perturbation preserving soliton profiles of the DNLS equation.
If $b>-\frac{3}{16}$, the equation \eqref{eq:1} has algebraically decaying solitons, which we call \textit{algebraic solitons}, as well as exponentially decaying solitons.
In this paper we study stability properties of solitons for \eqref{eq:1} by variational approach, and prove that if $b<0$, all solitons including algebraic solitons are stable in the energy space. 
The existence of stable algebraic solitons in \eqref{eq:1} shows an interesting mathematical example because stable algebraic solitons are not known in the context of the corresponding double power NLS.
\end{abstract}

\maketitle

\tableofcontents

\numberwithin{equation}{section} 
\section{Introduction}
\subsection{Setting of the problem}
In this paper we consider the following nonlinear Schr\"{o}dinger equation of derivative type:
\begin{equation}
 \label{eq:1.1}
i \partial_t u + \partial_x^2 u +i |u|^{2} \partial_x u +b|u|^4u=0 , \quad (t,x) \in \R \times \R, \ b \in \R .
\end{equation}
This equation has the following conserved quantities:  
\begin{align*}
\tag{Energy}
 E(u)&=\frac{1}{2}\left\| \partial_x u  \right\|_{L^2}^2 
  - \frac{1}{4}\rbra[i|u|^2\del_xu ,u]
-\frac{b}{6}\| u\|_{L^6}^6 ,
\\
\tag{Mass}
 M(u)&= \| u \|_{L^2}^2,
\\
\tag{Momentum}
P(u)&=\rbra[i\partial_x u,u],
\end{align*}
where $\rbra[\cdot,\cdot]$ is an inner product defined by
\begin{align*}
\rbra[v ,w] =\re\int_{\R} v(x)\overline{w(x)} dx\quad\text{for}~v, w\in L^2(\R ).
\end{align*}
We note that \eqref{eq:1.1} can be rewritten as 
\begin{align}
\label{eq:1.2}
i \partial_t u= E ' (u).
\end{align}
The equation \eqref{eq:1.1} is $L^2$-critical in the sense that the equation and $L^2$-norm are invariant under the scaling transformation 
\begin{align}
\label{eq:1.3}
u_{\lambda}(t,x)=\lambda^{\frac{1}{2}} u(\lambda^2 t,\lambda x), \quad \lambda >0. 
\end{align}
It is well known (see \cite{HO94a, Oz96}) that \eqref{eq:1.1} is locally well-posed in the energy space $H^1(\R)$ and that the energy, mass and momentum of the $H^1(\R)$-solution are conserved by the flow. 

When $b=0$, the equation
is a gauge equivalent form\footnote{The equation \eqref{eq:1.1} for $b=0$ and \eqref{DNLS} are equivalent under the following transformation: 
\begin{align*}
\psi(t,x) = u (t,x)\exp\l( -\frac{i}{2} \int_{-\infty}^{x} |u (t,y)|^2 dy\r).
\end{align*}
} 
of well-known derivative nonlinear Schr\"{o}dinger (DNLS) equation:
\begin{equation}
\label{DNLS}
\tag{DNLS}
i \partial_t \psi + \partial_x^2 \psi +i  \partial_x( |\psi |^{2} \psi )=0 , \quad (t,x) \in \R \times \R,
\end{equation}
which originally appeared in plasma physics as a model for the propagation of Alfv\'{e}n waves in magnetized plasma (see \cite{MOMT76, M76}). Kaup and Newell \cite{KN78} showed that \eqref{DNLS} is completely integrable, or more precisely that \eqref{DNLS} arises as a compatibility condition between two linear equations of a Lax pair. 
There is a large literature of the studies on \eqref{DNLS}, and it is beyond the scope of this paper to review it here. We just refer to \cite{H19, JLPS20} and references therein for further information.

The soliton profile of \eqref{DNLS} satisfies a double power elliptic equation with cubic-quintic nonlinearities (see \eqref{eq:1.7}). The quintic nonlinearity in \eqref{eq:1.1} only affects the coefficient in front of the quintic term in the elliptic equation, so in this sense the additional nonlinearity is not artificial, or rather natural as a perturbation preserving soliton profiles of \eqref{DNLS}. 
We note that the equation \eqref{eq:1.1} for $b\neq0$ is not expected anymore to be completely integrable while the quintic term preserves the $L^2$-critical structure of \eqref{DNLS}. Therefore, the equation \eqref{eq:1.1} can be seen as an important model to investigate the speciality of integrable structure of \eqref{DNLS} in the $L^2$-critical framework. 

We emphasize that the equation \eqref{eq:1.1} itself is an interesting mathematical model possessing a two-parameter family of solitons. For example, when $b>0$, this equation possesses both stable and unstable solitons in the $L^2$-critical framework (see \cite{O14}), which cannot be seen in other critical equations such as $L^2$-critical NLS and $L^2$-critical generalized KdV equation. The elliptic equation which soliton profiles of \eqref{eq:1.1} satisfy connects the problem on stability properties of standing waves in the double power NLS.\footnote{See \eqref{eq:1.17} below for more details.} These properties come from the rich structure of a two-parameter family of solitons.


In this paper we are interested in the equation \eqref{eq:1.1} for the case $b<0$.
The aim of this work is to study stability properties of solitons of \eqref{eq:1.1} by variational approach. We prove that if $b<0$, all solitons \textit{including algebraic solitons} are stable in $H^1(\R)$. The existence of stable algebraic solitons in \eqref{eq:1.1} shows an interesting mathematical example because stable algebraic solitons are not proved in the context of the double power NLS.



\subsection{Solitons}
\label{sec:1.2}
It is known  (see \cite{O14, H19}) that the equation \eqref{eq:1.1} has a two-parameter family of solitons. Consider solutions of (\ref{eq:1.1}) of the form
\begin{align}
\label{eq:1.4}
u_{\omega,c}(t,x)=e^{i\omega t} \phi_{\omega,c}(x-ct),
\end{align}
where $(\omega,c) \in\R^2$. 
 It is clear that $\phi_{\omega ,c}$ must satisfy the following equation:
\begin{align}
\label{eq:1.5}
-\phi ''+ \omega \phi +ic \phi ' - i|\phi|^{2} \phi '-b|\phi |^4\phi =0, \quad x \in\R.
\end{align}
Applying the gauge transformation to $\phi_{\omega ,c}$
\begin{align}
\label{eq:1.6}
\phi_{\omega,c}(x) 
&= \Phi_{\omega,c}(x) \exp\l( \frac{i}{2}cx - \frac{i}{4} \int_{-\infty}^{x} \l|\Phi_{\omega,c}(y)\r|^2 dy \r),
\end{align}
then $\Phi_{\omega ,c}$ satisfies the equation
\begin{align}
\label{eq:1.7}
- \Phi ''+ \l(\omega- \frac{c^2}{4}\r) \Phi +\frac{c}{2} |\Phi|^{2} \Phi - \frac{3}{16}\gamma |\Phi|^{4}\Phi =0, 
\quad x\in\R,
\end{align}
where $\gamma\ce 1+\frac{16}{3}b$. The positive radial (even) solution of (\ref{eq:1.7}) is explicitly obtained as follows (see also \cite{PPT79, O95}); 
if $\gamma >0$ or equivalently $b>-\frac{3}{16}$, 
\begin{align*}
\Phi_{\omega,c}^2(x) &=
\l\{
\begin{array}{ll}
\ds
 \frac{ 2(4\omega - c^2) }{\sqrt{c^2+\gamma (4\omega -c^2)} \cosh ( \sqrt{4\omega- c^2}x)-c }
& 
\ds \text{if}~-2\sqrt{\omega}<c<2\sqrt{\omega},
\\
& 
\\ 
\ds
\frac{4c}{(cx)^2+\gamma} 
& 
\ds \text{if}~c=2\sqrt{\omega},
\end{array}
\r.
\end{align*}
if $\gamma \leq 0$ or equivalently $b\leq -\frac{3}{16}$,
\begin{align*}
\Phi_{\omega,c}^2(x) &=
\begin{array}{ll}\ds
\frac{ 2(4\omega - c^2) }{\sqrt{c^2+\gamma (4\omega -c^2)} \cosh ( \sqrt{4\omega- c^2}x)-c } 
& 
\ds \text{if}~-2\sqrt{\omega} <c<-2s_{\ast}\sqrt{\omega}, 
\end{array}
\end{align*}
where $s_\ast$ is defined by
\begin{align*}
s_{\ast} =s_{\ast}(\gamma)=\sqrt{ \frac{-\gamma}{1-\gamma} } \in (0,1).
\end{align*}
Through the formula of $\Phi_{\omega ,c}$, the soliton of \eqref{eq:1.1} is explicitly represented as
\begin{align*}
u_{\omega ,c}(t,x) =\exp\l( i\omega t +\frac{i}{2}c(x-ct)-\frac{i}{4}\int_{-\infty}^{x-ct}|\Phi_{\omega ,c}(y)|^2 dy \r)\Phi_{\omega ,c}(x-ct).
\end{align*}

We note that the condition of two parameters $(\omega,c)$:
\begin{align} 
\label{eq:1.8}
\begin{array}{ll}
\ds\text{if}~\gamma >0\Leftrightarrow b>-\frac{3}{16},& \ds -2\sqrt{\omega} <c\leq 2\sqrt{\omega} ,\\[7pt]
\ds\text{if}~\gamma \leq 0\Leftrightarrow b\leq -\frac{3}{16},& \ds -2\sqrt{\omega} <c<-2s_{\ast}\sqrt{\omega} 
\end{array}
\end{align}
is a necessary and sufficient condition for the existence of non-trivial solutions of \eqref{eq:1.7} vanishing at infinity (see \cite[Theorem 5]{BeL83}). For $(\omega ,c)$ satisfying \eqref{eq:1.8}, one can rewrite
$(\omega ,c) =(\omega ,2s\sqrt{\omega})$, where the parameter $s$ satisfies 
\begin{align}
\label{eq:1.9}
\begin{array}{ll}
\ds\text{if}~b>-\frac{3}{16},& \ds -1 <s\leq 1,\\[7pt]
\ds\text{if}~b\leq -\frac{3}{16},& \ds -1 <s<-s_{\ast}. 
\end{array}
\end{align}
We note that the curve
\begin{align}
\label{eq:1.10}
\R^+ \ni\omega \mapsto (\omega , 2s\sqrt{\omega}) \in \R^2 
\end{align}
for each $s$ gives the scaling of the soliton, i.e.,
\begin{align}
\label{eq:1.11}
\phi_{\omega ,2s\sqrt{\omega}}(x) =\omega^{1/4} \phi_{1,2s} (\sqrt{\omega}x)\quad\text{for}~x\in\R,~\omega>0.
\end{align}
We note that the value $b=-\frac{3}{16}$ gives the turning point where the structure of the solitons of (\ref{eq:1.1}) changes. In particular algebraic solitons, which correspond to the case $c=2\sqrt\omega$, exist only for the case $b>-\frac{3}{16}$, which is the main interest in this paper. 

We now give the precise definition of stability of solitons in the energy space.  
\begin{definition*}
We say that the soliton $u_{\omega ,c}$ of \eqref{eq:1.1} is (orbitally) \textit{stable} in $H^1(\R)$ if for any $\eps>0$ there exists $\delta >0$ such that if $u_0\in H^1(\R)$ satisfies $\| u_0-\phi_{\omega ,c}\|_{H^1}<\delta$, then the maximal solution $u(t)$ of \eqref{eq:1.1} with $u(0)=u_0$ exists globally in time and satisfies 
\begin{align}
\label{eq:1.12}
\sup_{t\in\R}\inf_{(\theta ,y)\in\R^2}
\| u(t)-e^{i\theta}\phi_{\omega ,c}(\cdot -y)\|_{H^1}<\eps .
\end{align}
Otherwise, we say that the soliton is (orbitally) \textit{unstable} in $H^1(\R)$.\footnote{The rotations and space translations appearing in \eqref{eq:1.12} come from the invariant of the equation \eqref{eq:1.1}.}
\end{definition*}
Colin and Ohta \cite{CO06} proved the stability of exponentially decaying solitons for \eqref{DNLS}, which correspond to the solitons $u_{\omega,c}$ of \eqref{eq:1.1} for $b=0$ and $\omega>c^2/4$. 
The proof depends on variational methods related to the argument in \cite{S83} (see Section \ref{sec:1.4} for more details). Liu, Simpson and Sulem \cite{LSS13b} calculated linearized operators of a generalized derivative nonlinear Schr\"{o}dinger (gDNLS) equation:
\begin{align}
\tag{gDNLS}
\label{GD}
i\del_t u+\del_x^2 u+i|u|^{2\sigma}\del_xu=0,\quad (t,x) \in \R \times \R, ~\sigma >0,
\end{align}
and studied stability/instability of exponentially decaying solitons by applying the abstract theory of Grillakis, Shatah and Strauss \cite{GSS87, GSS90}. In particular they gave an alternative proof of the stability result in \cite{CO06} (see also \cite{GW95} for partial results in this direction). We note that the abstract theory \cite{GSS87, GSS90} is not applicable for algebraic solitons of either \eqref{DNLS}, \eqref{GD}, or \eqref{eq:1.1}, because of the lack of coercivity property of the linearized operator.    
As for algebraic solitons of \eqref{DNLS}, some kinds of stability properties were studied in \cite{KPR06, KW18} while stability/instability in the energy space remains an open problem.

The situation in \eqref{eq:1.1} for $b>0$ becomes different from the case $b=0$ due to the focusing effect from the quintic term. Ohta \cite{O14} extended the work of \cite{CO06} and proved that for each $b>0$ there exists a unique $s^*=s^*(b) \in (0,1)$ such that the soliton $u_{\omega ,c}$ is stable if $-2\sqrt{\omega}<c<2s^*\sqrt{\omega}$, and unstable if $2s^*\sqrt{\omega}<c<2\sqrt{\omega}$ (see Figure \ref{fig:1}). 
In \cite{NOW17} it was proved that algebraic soliton $u_{\omega ,2\sqrt{\omega}}$ is unstable for small $b>0$, where the assumption of smallness is used for construction of the unstable direction. If we observe momentum of solitons, the momentum is positive in the stable region, and negative in the unstable region. This implies that momentum of solitons has an essential effect on stability properties. In the borderline case $c=2s^*\sqrt{\omega}$, momentum of the soliton is zero, which corresponds to the degenerate case. Recently, in \cite{N20} instability for this case was proved for small $b>0$; later in \cite{FHpre}, instability with a large set of unstable directions was proved for all $b>0$. 

On the other hand, stability properties of solitons for the case $b<0$ seem to have been less studied. In this case momentum of all solitons is positive, which suggests that they are stable. Indeed, this is true as we show in this paper.
\begin{figure}[t]
\centering
\includegraphics[width=6cm]{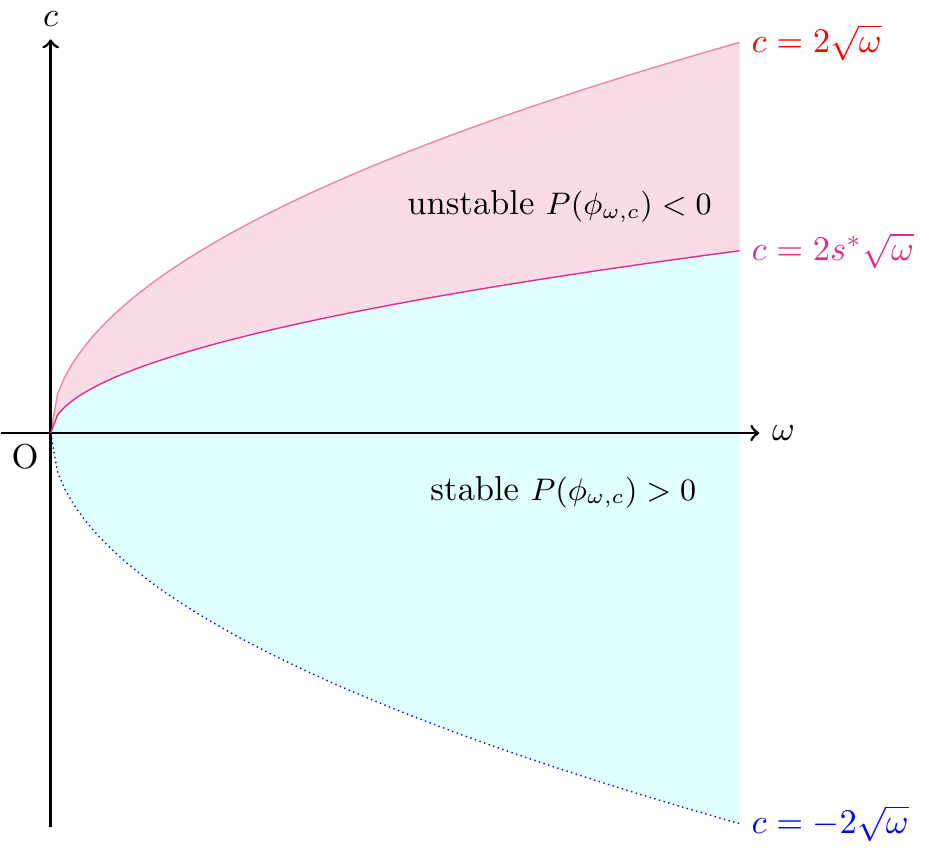}
\caption{The stable/unstable region of solitons in the case $b>0$.}
\label{fig:1}
\end{figure}

\subsection{Statement of the results}
\label{sec:1.3}
Our first theorem gives the connection between algebraic solitons and exponential decaying solitons, which would be of independent interest.
To state the result, we introduce the set $\Omega$ defined by 
\begin{align*}
\Omega = \l\{ (\omega ,c)\in\R^2 : -2\sqrt{\omega}<c<2\sqrt{\omega}\r\}.
\end{align*}
Then we have the following result.
\begin{theorem}
\label{thm:1.1}
Let $b>-\frac{3}{16}$. Suppose that $(\omega_0 ,c_0)$ satisfies $c_0=2\sqrt{\omega_0}$. Then, we have
\begin{align*}
\lim_{ \substack{(\omega ,c)\to (\omega_0 ,c_0)\\ (\omega ,c)\in\Omega} } 
\| \phi_{\omega ,c}-\phi_{\omega_0,c_0} \|_{H^m(\R )} =0
\end{align*}
for any $m\in\Z_{\geq 0}$.
\end{theorem}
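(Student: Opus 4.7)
My approach is to exploit the explicit formulas of Subsection~\ref{sec:1.2} and apply dominated convergence, after using the scaling identity \eqref{eq:1.11} to reduce the two-parameter limit to a one-parameter limit in $s = c/(2\sqrt{\omega}) \to 1^-$. Writing
\begin{align*}
\phi_{\omega ,c} - \phi_{\omega_0 ,c_0}
= \l[\phi_{\omega ,c} - \phi_{\omega ,2\sqrt{\omega}} \r]
+ \l[\phi_{\omega ,2\sqrt{\omega}} - \phi_{\omega_0 ,c_0}\r] ,
\end{align*}
the second bracket tends to zero in $H^m(\R)$ as $\omega \to \omega_0$ by the smooth $\omega$-dependence from \eqref{eq:1.11} combined with $\phi_{1,2} \in H^m(\R)$. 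A further use of \eqref{eq:1.11} on the first bracket reduces the problem to showing $\phi_{1, 2s} \to \phi_{1, 2}$ in $H^m(\R)$ as $s \to 1^-$.

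The heart of the argument is a uniform-in-$s$ algebraic decay estimate for $\Phi_{1,2s}$. Setting $\mu = \sqrt{1-s^2}$ and using the identities $\cosh(2y) - 1 = 2\sinh^2 y$ and $\sqrt{A+B} - \sqrt{A} = B/(\sqrt{A+B}+\sqrt{A})$, I rewrite the exponentially-decaying soliton formula as
\begin{align*}
\Phi_{1, 2s}^2(x) = \frac{4}{A(s) + 2B(s)\, \sinh^2(\mu x)/\mu^2} ,
\end{align*}
where $A(s) = \gamma/(\sqrt{s^2+\gamma(1-s^2)}+s) \to \gamma/2 > 0$ and $B(s) = \sqrt{s^2+\gamma(1-s^2)} \to 1$ as $s \to 1^-$. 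The elementary inequality $\sinh(y) \geq y$ for $y \geq 0$ yields $\sinh^2(\mu x)/\mu^2 \geq x^2$, and hence, uniformly for $s$ in a left neighborhood of $1$,
\begin{align*}
\Phi_{1, 2s}^2(x) \leq \frac{C}{1+x^2} .
\end{align*}
Pointwise convergence $\Phi_{1, 2s}^2(x) \to \Phi_{1, 2}^2(x) = 8/(\gamma+4x^2)$ is immediate, so dominated convergence yields $\Phi_{1, 2s} \to \Phi_{1, 2}$ in $L^2(\R)$.

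For higher derivatives I would differentiate the explicit formula and apply the same $\sinh(y) \geq y$ bound to obtain $|\partial_x^k \Phi_{1, 2s}(x)|^2 \leq C_k/(1+x^2)^{k+1}$ uniformly in $s$; alternatively these bounds can be obtained by bootstrapping on the elliptic equation $\Phi'' = (1-s^2)\Phi + s\Phi^3 - \frac{3\gamma}{16}\Phi^5$ coming from \eqref{eq:1.7}. Combined with pointwise convergence of each derivative, dominated convergence delivers $\Phi_{1,2s} \to \Phi_{1, 2}$ in $H^m(\R)$ for every $m \geq 0$. To transfer the convergence to $\phi_{1,2s}$, I use the gauge identity \eqref{eq:1.6}: the real phase $\theta_{1, 2s}(x) = sx - \frac{1}{4}\int_{-\infty}^x |\Phi_{1, 2s}|^2\, dy$ converges pointwise in $x$ because the integrand is dominated by $C/(1+y^2) \in L^1(\R)$. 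Each derivative $\partial_x^k \phi_{1, 2s}$ is a polynomial expression in $\Phi_{1, 2s}, \Phi_{1, 2s}', \ldots$ and $|\Phi_{1, 2s}|^2$ times the unimodular factor $e^{i\theta_{1, 2s}}$, so pointwise convergence of the ingredients together with the uniform decay dominator allow one final invocation of dominated convergence.

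The main obstacle is the marginal $1/|x|$ decay of $\Phi_{\omega_0, c_0}$ at infinity: one must dominate $|\Phi_{\omega, c}(x)|^2$ by a single $L^1$ function uniformly in the parameter, even though the family transitions between algebraic-like and exponential decay around the moving scale $|x| \sim 1/\mu$. The inequality $\sinh(y) \geq y$ applied to the rewritten formula is precisely the ingredient that bridges this transition and makes dominated convergence applicable throughout.
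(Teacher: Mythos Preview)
Your argument is correct and offers a genuinely different route to the $L^2$ convergence than the paper's proof. The paper (Section~\ref{sec:3}) first establishes pointwise convergence of $\Phi_{1,2s}$ by Taylor-expanding numerator and denominator of the explicit formula, then invokes the explicit mass formula (Lemma~\ref{lem:2.1}) to get convergence of $\|\Phi_{1,2s}\|_{L^2}$, and finally appeals to the Br\'ezis--Lieb lemma to upgrade these two facts to $\|\Phi_{1,2s}-\Phi_{1,2}\|_{L^2}\to 0$. Your rewriting $\Phi_{1,2s}^2 = 4/\bigl(A(s)+2B(s)\sinh^2(\mu x)/\mu^2\bigr)$ together with $\sinh y\ge y$ produces a single $L^1$ dominator $C/(1+x^2)$ uniformly in $s$, so dominated convergence applies directly; this bypasses both the mass formula and Br\'ezis--Lieb, and as a bonus supplies the uniform decay bound needed to control the phase integral in the gauge formula~\eqref{eq:1.6}. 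From $H^2$ onward the two proofs essentially coincide: both bootstrap on the elliptic equation \eqref{eq:1.7} (your alternative of differentiating the explicit formula also works, and the first-integral identity $(\Phi')^2=(1-s^2)\Phi^2+\tfrac{s}{2}\Phi^4-\tfrac{\gamma}{16}\Phi^6$ combined with your bound on $\Phi^2$ confirms the claimed pointwise decay of $\Phi'$). Your reduction of the two-parameter limit via the scaling identity~\eqref{eq:1.11} is also the one the paper uses.
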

\begin{remark*}
By Theorem \ref{thm:1.1} and Sobolev's embedding theorem, we obtain that 
\begin{align*}
\lim_{ \substack{(\omega ,c)\to (\omega_0 ,c_0)\\ (\omega ,c)\in\Omega} } 
\| \phi_{\omega ,c}-\phi_{\omega_0,c_0} \|_{W^{m,\infty}(\R )} =0
\end{align*}
for any $m\in\Z_{\geq 0}$.
\end{remark*}
Theorem \ref{thm:1.1} shows that algebraic solitons can be obtained as strong limits of exponentially decaying solitons. 
This relation may be useful for further study on algebraic solitons. Here we adapt the approach in \cite{H18} and give a simple proof by using explicit formulae of solitons. Recently, in \cite{FH20} a similar result of Theorem \ref{thm:1.1} was proved in the context of a double power NLS. The argument in \cite{FH20} depends on variational characterization of ground states, where explicit formulae of solitons are not necessary. 

Now we state our main result. The main result in this paper is the following stability result.
\begin{theorem}
\label{thm:1.2}
Let $-\frac{3}{16}<b<0$ and let $(\omega ,c)$ satisfy $-2\sqrt{\omega}<c\leq 2\sqrt{\omega}$. Then the soliton $u_{\omega,c}$ of \eqref{eq:1.1} is stable. In particular the algebraic soliton is stable.
\end{theorem}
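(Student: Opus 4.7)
\emph{Plan.} The proof is variational, built around the action
\[
S_{\omega,c}(u) = E(u) + \tfrac{\omega}{2} M(u) + \tfrac{c}{2} P(u),
\]
for which $\phi_{\omega,c}$ is a critical point by \eqref{eq:1.5}. I would treat the exponentially decaying case $-2\sqrt{\omega}<c<2\sqrt{\omega}$ first and then pass to the algebraic case $c=2\sqrt{\omega}$ using Theorem \ref{thm:1.1}.

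\emph{Variational characterization and coercivity.} For $-2\sqrt{\omega}<c<2\sqrt{\omega}$, the gauge transform \eqref{eq:1.6} reduces $\phi_{\omega,c}$ to the unique positive even solution of the double power equation \eqref{eq:1.7}. Since $-\tfrac{3}{16}<b<0$ gives $\gamma\in(0,1)$, equation \eqref{eq:1.7} falls into a regime where the positive solution is the unique minimizer (up to translations) of a standard action on its Nehari manifold; pushing this back through \eqref{eq:1.6} characterizes $\phi_{\omega,c}$ as a minimizer of $E$ on the constraint set $\{M(u)=M(\phi_{\omega,c}),\ P(u)=P(\phi_{\omega,c})\}$. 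Writing $u=\rho e^{i\theta}$ and completing the square gives the identity
\[
E(u)= \tfrac{1}{2}\|\partial_x\rho\|_{L^2}^2 + \tfrac{1}{2}\int \rho^2\bigl(\theta'+\tfrac{1}{4}\rho^2\bigr)^2 dx - \bigl(\tfrac{1}{32}+\tfrac{b}{6}\bigr)\int \rho^6\,dx,
\]
in which the $L^6$ coefficient $\tfrac{1}{32}+\tfrac{b}{6}$ is positive but strictly smaller than at $b=0$ when $-\tfrac{3}{16}<b<0$. Combined with the mass constraint and one-dimensional Gagliardo--Nirenberg, this shows the action is bounded below on the constraint set and that minimizing sequences are bounded in $H^1$.

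\emph{Compactness and stability in the exponential case.} Apply Lions' concentration--compactness to a minimizing sequence. Vanishing is excluded because the constraint $P(\phi_{\omega,c})>0$ (positivity of momentum holds throughout the admissible range when $b<0$, as noted in the introduction) is incompatible with $L^p$-vanishing. Dichotomy is ruled out by strict subadditivity of the infimum, which in turn follows from the $L^2$-critical scaling \eqref{eq:1.3} together with the sign of the quintic contribution. Minimizing sequences are therefore precompact in $H^1$ modulo translation and phase rotation, which via the standard Cazenave--Lions argument yields orbital stability of $\phi_{\omega,c}$ for $-2\sqrt{\omega}<c<2\sqrt{\omega}$.

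\emph{Algebraic case and main obstacle.} For $c=2\sqrt{\omega}$, argue by contradiction: suppose there exist $\varepsilon_0>0$, times $t_n$, and initial data $u_{0,n}\to \phi_{\omega,2\sqrt{\omega}}$ in $H^1$ with $\inf_{(\theta,y)}\|u_n(t_n)-e^{i\theta}\phi_{\omega,2\sqrt{\omega}}(\cdot-y)\|_{H^1}\geq \varepsilon_0$. By Theorem \ref{thm:1.1} the algebraic soliton is the $H^1$-limit of exponential solitons $\phi_{\omega_k,c_k}$ with $c_k<2\sqrt{\omega_k}$, and the conserved quantities $E,M,P$ are continuous on $H^1$; hence $u_n(t_n)$ is a near-minimizing sequence for the variational problem with constraint values converging to those of the algebraic soliton. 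Running the compactness argument of the previous paragraph uniformly in the parameters along the approximating family and invoking Theorem \ref{thm:1.1} to identify the limit forces $u_n(t_n)\to \phi_{\omega,2\sqrt{\omega}}$ modulo symmetries, the desired contradiction. The hardest point will be precisely here: the algebraic soliton sits at the boundary of the admissible region, the Grillakis--Shatah--Strauss framework fails because the linearized operator loses its spectral gap, and one must verify that the variational problem $\inf\{E(u):M(u)=m,P(u)=p\}$ varies continuously and remains attained at the boundary parameters, which is exactly where Theorem \ref{thm:1.1} is indispensable.
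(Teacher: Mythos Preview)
Your route diverges from the paper's at almost every step, and several of your steps have real gaps.

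\textbf{The two-constraint characterization is not established.} You assert that the Nehari characterization of $\Phi_{\omega,c}$ for \eqref{eq:1.7} ``pushes back through \eqref{eq:1.6}'' to show that $\phi_{\omega,c}$ minimizes $E$ on $\{M=M(\phi_{\omega,c}),\ P=P(\phi_{\omega,c})\}$. This implication is not automatic: a Nehari minimizer of an action is not in general a constrained minimizer of the energy under mass and momentum constraints, and the gauge map \eqref{eq:1.6} does not carry one variational problem into the other. You would need a separate argument showing the two-constraint infimum is attained and identifying the minimizer.

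\textbf{Dichotomy cannot be excluded by the $L^2$-critical scaling.} The scaling \eqref{eq:1.3} preserves mass, so it gives \emph{equality}, not strict subadditivity, in the mass parameter; this is precisely why Cazenave--Lions does not work directly for $L^2$-critical problems. Your sentence ``strict subadditivity\dots follows from the $L^2$-critical scaling together with the sign of the quintic contribution'' is where the proof would fail. Indeed, the paper explicitly restricts its Cazenave--Lions argument (Section~\ref{sec:5}) to $c<0$, where a \emph{one}-constraint problem in the gauge-equivalent variables is coercive; for $c\ge 0$ this route is not available.

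\textbf{The limiting argument for the algebraic case is incomplete.} Stability of each approximating soliton with its own $\delta(\varepsilon,\omega_k,c_k)$ does not yield stability of the limit unless you prove these $\delta$'s are bounded below uniformly, or equivalently that the compactness of minimizing sequences is uniform up to the boundary. You note this is ``the hardest point'' but do not resolve it; Theorem~\ref{thm:1.1} alone does not supply the needed uniformity.

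\textbf{What the paper actually does.} The paper works throughout in the gauge-equivalent form \eqref{ME}, where the energy is $\cE(v)=\tfrac12\|\partial_xv\|_{L^2}^2-\tfrac{\gamma}{32}\|v\|_{L^6}^6$ (your polar-decomposition identity is essentially this gauge). It uses the Nehari characterization (Proposition~\ref{prop:4.1}) and the associated potential wells $\scA_{\omega,c}^\pm=\scB_{\omega,c}^\pm$ (Proposition~\ref{prop:4.2}), in the spirit of Shatah and Colin--Ohta, \emph{not} a Cazenave--Lions two-constraint minimization. The new ingredient is Proposition~\ref{prop:4.4}: one perturbs along the scaling curve $\tau\mapsto((\mu+\tau)^2,2s(\mu+\tau))$ and uses $g(\tau)=d((\mu+\tau)^2,2s(\mu+\tau))=(\mu+\tau)^2 d(1,2s)$ together with $s\cP(\varphi_{1,2s})>0$ (which holds for all $s\in(0,1]$ precisely because $b<0$) to trap the flow between two potential wells. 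This yields $\cK_{\omega,c}(v_n(t_n))\to 0$ for any putative unstable sequence, after which the Nehari compactness (Proposition~\ref{prop:4.3}) closes the argument. The algebraic case $s=1$ is handled \emph{simultaneously} with the exponential case by this device; Theorem~\ref{thm:1.1} is not used in the stability proof at all.
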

\subsection{Comments on the main result}
\label{sec:1.4}
The stability result of algebraic solitons gives the counterpart of the previous instability result for the case $b>0$. 
As pointed out before, the case $c=2\sqrt{\omega}$ (regardless of $b\in\R$) cannot be treated by the abstract theory \cite{GSS87, GSS90}. 
It is difficult to study stability properties for this case, based on the study of the linearized operator $S_{\omega ,c}''(\phi_{\omega ,c})$ (see below for the definition of $S_{\omega ,c}$), because of the lack of coercivity property of $S_{\omega ,c}''(\phi_{\omega ,c})$.\footnote{The essential spectrum of $S_{\omega ,c}''(\phi_{\omega ,c})$ is given by $\sigma_{\rm ess}\l(S_{\omega ,c}''(\phi_{\omega ,c})\r)=\l[\omega-c^2/4,\infty \r)$, which gives the lack of coercivity property for the case $c=2\sqrt{\omega}$ (see \cite{FHpre} for more details).}
For the proof of Theorem \ref{thm:1.2} we use variational approach inspired from the works in \cite{S83, CO06, O14}, which enables us to treat the case $c=2\sqrt{\omega}$. 

First we review the stability theory in the papers \cite{CO06, O14}. We define the action functional $S_{\omega,c}$ by
\begin{align}
\label{eq:1.13}
S_{\omega ,c}(\phi )=E(\phi )+\frac{\omega}{2} M(\phi )+\frac{c}{2}P(\phi ),
\end{align}
and set $d(\omega ,c)=S_{\omega ,c}(\phi_{\omega ,c})$.
We note that \eqref{eq:1.5} can be rewritten as $S_{\omega ,c}' (\phi) =0$ and $\phi_{\omega ,c}$ is a critical point of $S_{\omega ,c}$.
When $b\ge 0$ the following stability result is known.
\begin{proposition}[\cite{CO06, O14}]
\label{prop:1.3}
Let $b\ge 0$ and let $(\omega ,c)$ satisfy $\omega >c^2/4$. If there exists $\xi\in\R^2$ such that
\begin{align}
\label{eq:1.14}
\tbra[d'(\omega ,c), \xi] \neq 0, ~ \tbra[d''(\omega ,c)\xi ,\xi] >0,
\end{align}
then the soliton $u_{\omega,c}$ of \eqref{eq:1.1} is stable.
\end{proposition}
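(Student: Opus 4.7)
My plan is to follow Shatah's variational framework, as adapted by Colin-Ohta and Ohta to the derivative NLS setting. The first step is to record the slope identity $d'(\omega,c) = \tfrac{1}{2}(M(\phi_{\omega,c}),\,P(\phi_{\omega,c}))$, which follows immediately from $S_{\omega,c}'(\phi_{\omega,c})=0$ and the definition of $d$. I would then parametrise the given direction by $g(\lambda) := d(\omega+\lambda\xi_1,\,c+\lambda\xi_2)$, so that the hypotheses become $g'(0)\neq 0$ and $g''(0)>0$, making $g'$ strictly monotone on a neighbourhood of $\lambda=0$. For every $u$ in a small $H^1$-tube around $\phi_{\omega,c}$, the equation
$$2 g'(\lambda) = \xi_1 M(u) + \xi_2 P(u)$$
therefore defines a unique $\lambda(u)$ close to $0$, continuous in $u$. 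Because $M$ and $P$ are conserved by \eqref{eq:1.1}, $\lambda(u(t))$ is constant along solutions, so the functional
$$\Lambda(u) := S_{\omega+\lambda(u)\xi_1,\,c+\lambda(u)\xi_2}(u) - d(\omega+\lambda(u)\xi_1,\,c+\lambda(u)\xi_2)$$
is conserved by the flow (using also that $S_{\omega',c'}(u)$ is linear in $(\omega',c')$ and $E$ is conserved). Clearly $\Lambda(\phi_{\omega,c})=0$.

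A direct Taylor expansion of $\Lambda$ around $\phi_{\omega,c}$, with $v := u - \phi_{\omega,c}$, produces the identity
$$\Lambda(u) = \tfrac{1}{2}\bigl\langle S_{\omega,c}''(\phi_{\omega,c}) v,\,v\bigr\rangle + \tfrac{1}{2} g''(0)\,\lambda(u)^2 + O(\|v\|_{H^1}^3).$$
The linear terms cancel because $S_{\omega,c}'(\phi_{\omega,c})=0$ and because the defining equation of $\lambda(u)$ is precisely chosen so that the first-order $\lambda$-dependence is absorbed; at leading order $\lambda(u) \simeq (2g''(0))^{-1}\bigl\langle \xi_1 M'(\phi_{\omega,c}) + \xi_2 P'(\phi_{\omega,c}),\,v\bigr\rangle$.

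The crux of the proof, and the step I expect to be the main obstacle, is the coercivity bound
$$\Lambda(u) \ge C \inf_{(\theta,y)\in\R^2} \| u - e^{i\theta}\phi_{\omega,c}(\cdot-y)\|_{H^1}^2$$
on a sufficiently small tube about the orbit. The operator $S_{\omega,c}''(\phi_{\omega,c})$ has the two-dimensional kernel spanned by $i\phi_{\omega,c}$ and $\partial_x\phi_{\omega,c}$ coming from the phase and translation invariance, which is absorbed by taking the distance modulo $(\theta,y)$. On the orthogonal complement of this kernel, however, $S_{\omega,c}''(\phi_{\omega,c})$ typically carries one negative direction associated with moving the soliton along the curve $\lambda\mapsto(\omega+\lambda\xi_1,\,c+\lambda\xi_2)$. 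The extra term $g''(0)\lambda(u)^2$ is precisely what compensates this negative eigenvalue, provided the negative eigenvector pairs nontrivially with $\xi_1 M'(\phi_{\omega,c})+\xi_2 P'(\phi_{\omega,c})$; this nondegeneracy is guaranteed by $g'(0)\neq 0$, while the sign of the compensation is controlled by $g''(0)>0$. Making this rigorous requires quantitative spectral information about $S_{\omega,c}''(\phi_{\omega,c})$, which is where the technical work concentrates.

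Granted the coercivity estimate, stability follows by the usual contradiction argument: an unstable soliton would yield sequences $u_{0,n}\to\phi_{\omega,c}$ in $H^1$ and times $t_n>0$ for which, by continuous dependence on initial data, one can arrange $\inf_{(\theta,y)}\|u_n(t_n)-e^{i\theta}\phi_{\omega,c}(\cdot-y)\|_{H^1}=\eps_0$ for some fixed small $\eps_0>0$. But conservation of $\Lambda$ gives $\Lambda(u_n(t_n))=\Lambda(u_{0,n})\to 0$, contradicting the coercive lower bound $\Lambda(u_n(t_n))\ge C\eps_0^2>0$.
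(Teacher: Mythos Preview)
Your plan follows the Grillakis--Shatah--Strauss route (Lyapunov functional, Taylor expansion, coercivity of $S_{\omega,c}''(\phi_{\omega,c})$ modulo the negative direction compensated by $g''(0)\lambda(u)^2$), and this is \emph{not} the argument used in \cite{CO06, O14} or summarized in the paper. The paper's proof avoids the linearized operator entirely: one first shows that $\phi_{\omega,c}$ is the minimizer of $S_{\omega,c}$ on the Nehari manifold $\{K_{\omega,c}=0\}$; this yields that the potential wells $\scK_{\omega,c}^{\pm}=\{S_{\omega,c}<d(\omega,c),\ \pm K_{\omega,c}>0\}$ are invariant under the flow. The hypotheses $g'(0)\neq 0$, $g''(0)>0$ are then used to show that initial data near $\phi_{\omega,c}$ lie in $\scK_{(\omega,c)+\eps\xi}^{+}\cap\scK_{(\omega,c)-\eps\xi}^{-}$, which traps an auxiliary functional (the analogue of $\cJ_c$ in Section~\ref{sec:4}) between $d((\omega,c)-\eps\xi)$ and $d((\omega,c)+\eps\xi)$ for all time. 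This forces $K_{\omega,c}(u(t_n))\to 0$ along any putative unstable sequence, and a concentration--compactness result for minimizing sequences on the Nehari manifold (the analogue of Proposition~\ref{prop:4.3}) produces the contradiction. No spectral information about $S_{\omega,c}''(\phi_{\omega,c})$ enters.

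Your scheme is legitimate in principle---it is essentially what \cite{LSS13b} carries out for exponentially decaying solitons---but the step you yourself flag as ``the main obstacle'' is a genuine gap, not a routine verification. Establishing the coercivity of $\Lambda$ requires knowing that $S_{\omega,c}''(\phi_{\omega,c})$ has exactly one negative eigenvalue and that its kernel is exactly $\mathrm{span}\{i\phi_{\omega,c},\partial_x\phi_{\omega,c}\}$; for the derivative-type nonlinearity this spectral analysis is delicate (the operator is a nontrivial matrix operator after separating real and imaginary parts) and is precisely the work that the variational approach of \cite{CO06, O14} was designed to bypass. Your heuristic that ``$g'(0)\neq 0$ guarantees the nondegeneracy'' is not sufficient: what is needed is that the negative eigendirection is \emph{not} orthogonal to $\xi_1 M'(\phi_{\omega,c})+\xi_2 P'(\phi_{\omega,c})$, and this must be checked. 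So as a plan, you have identified the correct conserved Lyapunov functional, but the proof as written stops exactly at the hard analytic step; to complete it you would have to replicate the linearized-operator computations of \cite{LSS13b} rather than the potential-well argument the paper actually invokes.
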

Proposition \ref{prop:1.3} is proved in the following variational argument.\footnote{This can be regarded as a certain extension of the argument in \cite{S83} to a two-parameter family of solitons.}
First we prove that the soliton profile $\phi_{\omega ,c}$ is a minimizer on the Nehari manifold
\begin{align*}
\l\{ \varphi \in H^1(\R)\setminus\{ 0\} :
K_{\omega ,c}(\varphi )=0  \r\},
\end{align*}
where $K_{\omega ,c}(\varphi)\ce\l.\frac{d}{d\lambda}S_{\omega ,c}(\lambda\varphi)\r|_{\lambda=1}$.
Next we consider the potential wells
\begin{align*}
\scK_{\omega ,c}^{+}&=\l\{ u\in H^1(\R)\setminus\{0\} : S_{\omega ,c}(u) <d(\omega ,c), 
K_{\omega ,c}(u)>0\r\},\\
 \scK_{\omega ,c}^{-}&=\l\{ v\in H^1(\R)\setminus\{0\} : S_{\omega ,c}(u) <d(\omega ,c), 
K_{\omega ,c}(u)<0\r\}.
\end{align*}
By using the variational characterization on the Nehari manifold, we see that $\scK_{\omega ,c}^{+}$ and $\scK_{\omega ,c}^{-}$ are invariant under the flow of \eqref{eq:1.1}. Then, under the condition \eqref{eq:1.14}, one can control the flow around the soliton, based on the calculation of the function $\tau\mapsto d( (\omega,c)+\tau\xi)$ and properties of potential wells.

By computing $d''(\omega ,c)$ we have the following identity 
(see \cite[Lemma 1]{O14}):
\begin{align}
\label{eq:1.15}
\mathrm{det}[d''(\omega ,c)]=\frac{-2P(\phi_{\omega ,c})}{\sqrt{4\omega -c^2}\l\{c^2+\gamma (4\omega -c^2) \r\} }.
\end{align}
Here we note that $P(\phi_{\omega ,c})$ is positive if $(\omega ,c)$ satisfies that
\begin{align}
\label{eq:1.16}
\begin{array}{ll}
\ds\text{if}~b >0,& \ds -2\sqrt{\omega}<c<2s^{*}\sqrt{\omega} ,\\[7pt]
\ds\text{if}~b = 0,& \ds -2\sqrt{\omega}<c<2\sqrt{\omega}.
\end{array}
\end{align}
Therefore, we deduce that $\det d''(\omega ,c)<0$ under the condition \eqref{eq:1.16}. This yields the existence of $\xi\in\R^2$ satisfying \eqref{eq:1.14} because $d''(\omega ,c)$ has one positive eigenvalue. Hence, it follows from Proposition \ref{prop:1.3} that if \eqref{eq:1.16} holds, the soliton $u_{\omega ,c}$ is stable. This is a summary of the stability results in \cite{CO06, O14}.  

There are a few difficulties to study stability properties of solitons in the case $b<0$. 
When $b<0$ the defocusing effect from the quintic term $b|u|^4u$ gives an obstacle for the variational characterization.
To overcome that, we consider the following gauge equivalent form of \eqref{eq:1.1}:\footnote{The equation \eqref{ME} is also used in previous works \cite{Wu15, H19}.}
\begin{align}
\label{ME}
\tag{1.1$'$}
i\del_t v+\del_x^2 v+\frac{i}{2}|v|^2\del_x v-\frac{i}{2}v^2\del_x\overline{v}+\frac{3}{16}\gamma |v|^4v=0,\quad (t,x) \in \R \times \R.
\end{align}
Considering this form, one can characterize solitons on the Nehari manifold if $b\geq -\frac{3}{16}$. However, the equation \eqref{ME} does not have the ``good" Hamiltonian structure as in \eqref{eq:1.2}, so it becomes more delicate to control the flow around the soliton. Another problem arises when we treat algebraic solitons (the case $c=2\sqrt{\omega}$). We note that $d''(\omega ,c)$ does not make sense when $c=2\sqrt{\omega}$ (see \eqref{eq:1.15}) because this case corresponds to the boundary of existence region of solitons. Therefore the stability criteria in Proposition \ref{prop:1.3} does not make sense for the case $c=2\sqrt{\omega}$. 

In the present paper, we use the scaling curve \eqref{eq:1.10} effectively for the control of the flow, based on variational characterization of solitons of \eqref{ME}. This approach enables us to prove stability for algebraic solitons and exponential decaying solitons in a unified way. Also, our variational argument along the scaling curve offers new perspectives to stability theory of a two-parameter family of solitons (see the end of Section \ref{sec:4} for more details).

As a relevant work of this paper, Guo \cite{G18} studied stability of algebraic solitons of \eqref{GD} for the case $0<\sigma <1$ by variational approach.
Compared with our setting, stability problems become rather easier because the case $0<\sigma <1$ corresponds to $L^2$-subcritical problem. 
We note that the well-posedness of \eqref{GD} in $H^1(\R)$ is assumed in \cite{G18}, and the well-posedness remains an open problem in the case $0<\sigma <1$. For related topics on \eqref{GD} we refer to \cite{HO16, FHI17, LPS19} and references therein.

Algebraic solitons also appear in the following double power NLS:
\begin{align}
\label{eq:1.17}
i\del_t u+\Delta u-|u|^{p-1}u+|u|^{q-1}u=0,\quad (t,x)\in\R\times\R^d,
\end{align}
where $1<p<q<1+4/(d-2)_+$. If we consider the standing wave (soliton) solution
$
e^{i\omega t}\phi_{\omega}(x),
$
then $\phi_{\omega}$ satisfies the elliptic equation
\begin{align}
\label{eq:1.18}
-\Delta \phi +\omega\phi +|\phi|^{p-1}\phi -|\phi|^{q-1}\phi =0,\quad x\in\R^d.
\end{align}
We note that the equation \eqref{eq:1.7} for $0<c\leq 2\sqrt{\omega}$ and $\gamma>0$ corresponds to \eqref{eq:1.18} for $p=3$, $q=5$ and $d=1$.\footnote{Although there is a link of soliton profiles in between \eqref{eq:1.1} and \eqref{eq:1.17}, stability properties may change (see \cite[Remark 1]{CO06}).} Due to the defocusing effect from the lower power order nonlinearity, \eqref{eq:1.18} has algebraically decaying standing waves with $\omega=0$ as well as usual standing waves decaying exponentially with $\omega>0$. Instability and strong instability of these two types of standing waves were studied in \cite{FH20} (see also \cite{IK93, O95} for earlier results), where variational characterization of ground states plays a key role in the proof.
We remark that in the context of \eqref{eq:1.17}, stable algebraic solitons are not known for any cases of $(p,q)$.

Stability of solitions are closely related to the mass condition yielding global solutions of \eqref{eq:1.1} in the energy space. We define the mass threshold value as
\begin{align}
\label{eq:1.19}
M^*(b)=
\l\{
\begin{array}{ll}
M(\phi_{1,2s^*(b)}) &\text{if}~b> 0,\\[3pt]
\frac{4\pi}{\gamma^{\frac{3}{2}}}  &\text{if}~-\frac{3}{16}<b\leq 0(\Leftrightarrow 0<\gamma\le1).
\end{array}
\r.
\end{align}
In \cite{H19} the author obtained the new mass condition for \eqref{eq:1.1} such that if the initial data $u_0\in H^1(\R)$ of \eqref{eq:1.1} satisfies $M (u_0) < M^*(b)$, then the corresponding $H^1(\R)$-solution is global and bounded.\footnote{If $b\le -\frac{3}{16}$, for any initial data $u_0\in H^1(\R)$, the corresponding $H^1(\R)$-solution is global and bounded.}
For the case $b=0$, this mass condition is nothing but $4\pi$-mass condition, which was first proved in \cite{Wu15}.
We note that when $b>0$, $M^*(b)$ is the mass of the soliton lying in the borderline case in the stable/unstable region.
On the other hand, when $-\frac{3}{16}<b<0(\Leftrightarrow 0<\gamma<1)$, we have the following relation:
\begin{align*}
M (\phi_{1,2})  = \frac{4\pi}{\sqrt{\gamma}}<
\frac{4\pi}{\gamma^{\frac{3}{2}}} =M(\phi_{1,2})+P(\phi_{1,2}),
\end{align*}
which indicates that positive momentum of algebraic solitons boosts the threshold value. This fact and the global result above are compatible with the stability of algebraic solitons because the stability implies that the flow around algebraic solitons is global and bounded.

In the recent progress of studies on \eqref{DNLS}, global well-posedness without the smallness assumption of the mass was established in weighted Sobolev spaces by taking advantage of completely integrable structure (see \cite{PSS17, JLPS20}).\footnote{After this work was completed, it was proved in \cite{BP} that \eqref{DNLS} is globally well-posed in Sobolev spaces without the smallness assumption of the mass.} 
These results give a remarkable difference with other critical equations such as $L^2$-critical NLS and $L^2$-critical generalized KdV, while the dynamics of \eqref{DNLS} in the energy space is not yet clear including the fundamental problem of stability/instability of algebraic solitons. 
%
\subsection{Stability results for the case $b\le -\frac{3}{16}$}
The proof of Theorem \ref{thm:1.2} is not applicable to the case $b< -\frac{3}{16}$ because the argument depends on variational characterization on the Nehari manifold, which does not hold for this case.
However, by using another variational approach inspired from Cazenave and Lions \cite{CL82}, we obtain the following result.
\begin{theorem}
\label{thm:1.4}
Let $b\leq -\frac{3}{16}$ and let $(\omega ,c)$ satisfy $-2\sqrt{\omega}<c< -2s_*\sqrt{\omega}$. Then the soliton $u_{\omega,c}$ of \eqref{eq:1.1} is stable.
\end{theorem}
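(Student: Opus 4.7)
The Nehari-based variational argument used for Theorem~\ref{thm:1.2} fails here because for $\gamma\le 0$ the quintic term in the gauge-equivalent form \eqref{ME} becomes defocusing, so the soliton $\phi_{\omega,c}$ is no longer a minimizer on the Nehari manifold. Following the hint at the end of Section~\ref{sec:1.4}, I would employ a Cazenave--Lions \cite{CL82} concentration-compactness argument for the \emph{doubly constrained} variational problem
\begin{align*}
I(m_0,p_0)\ce\inf\bigl\{E(u):u\in H^1(\R),\ M(u)=m_0,\ P(u)=p_0\bigr\},
\end{align*}
where $m_0=M(\phi_{\omega_0,c_0})$ and $p_0=P(\phi_{\omega_0,c_0})$ for the soliton of interest. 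The cubic derivative term is absorbed via Young's inequality $|(i|u|^2\partial_x u,u)|\le \eps\|\partial_x u\|_{L^2}^2+C_\eps\|u\|_{L^6}^6$, and the favourable sign $-\tfrac{b}{6}>0$ of the quintic term (combined with the mass constraint) then yields $I(m_0,p_0)>-\infty$ and forces minimizing sequences to be bounded in $H^1(\R)$.

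The central step is compactness of minimizing sequences modulo spatial translations and phase rotations. I would apply Lions' concentration-compactness lemma to $\{|u_n|^2\}$. Vanishing is excluded by combining the lower bound $\|\partial_x u_n\|_{L^2}\ge |p_0|/\sqrt{m_0}>0$ (positivity of $p_0$ when $b<0$ was noted at the end of Section~\ref{sec:1.2}) with the strict inequality $I(m_0,p_0)<p_0^2/(2m_0)$, which is checked by evaluating the energy on $\phi_{\omega_0,c_0}$. Dichotomy is excluded by proving strict subadditivity $I(m_0,p_0)<I(m_1,p_1)+I(m_2,p_2)$ for any admissible splitting $(m_0,p_0)=(m_1+m_2,p_1+p_2)$; this is obtained from the $L^2$-critical scaling $u(x)\mapsto \lambda^{1/2}u(\lambda x)$, under which $M$ is invariant while $P$ and $E$ rescale by $\lambda$ and $\lambda^2$ respectively, giving the homogeneity $I(m,\lambda p)=\lambda^2 I(m,p)$. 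An $H^1$-strong limit of a suitably translated, phase-corrected subsequence then attains $I(m_0,p_0)$, satisfies the Euler--Lagrange equation $S'_{\omega_*,c_*}(\phi_*)=0$ for some multipliers $(\omega_*,c_*)$, and, by the uniqueness of positive even solutions to \eqref{eq:1.5} provided by the explicit formulae in Section~\ref{sec:1.2}, must be of the form $e^{i\theta}\phi_{\omega_0,c_0}(\cdot-y)$.

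Stability is deduced by the standard Cazenave--Lions contradiction argument: for $u_0$ close to $\phi_{\omega_0,c_0}$ in $H^1$, the corresponding solution $u(t)$ is globally defined (all $H^1$ solutions are global in this regime, cf.\ Section~\ref{sec:1.4}) and conserves $E$, $M$, $P$, so any time sequence $\{u(t_n)\}$ is a near-minimizing sequence for $I\bigl(M(u_0),P(u_0)\bigr)$; continuity of $I(m,p)$ near $(m_0,p_0)$ together with the compactness result above keeps $u(t)$ in a neighbourhood of the orbit $\{e^{i\theta}\phi_{\omega_0,c_0}(\cdot-y)\}$. The principal technical obstacle is strict subadditivity: in the two-constraint setting the naive one-parameter scaling is not enough, and one has to combine it with a concavity argument exploiting the freedom of splitting $(m_0,p_0)$. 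A secondary difficulty is to identify the Lagrange multipliers $(\omega_*,c_*)$ with the prescribed $(\omega_0,c_0)$, which should follow from the monotonicity of $(m,p)\mapsto(\omega,c)$ along the scaling curves \eqref{eq:1.10}.
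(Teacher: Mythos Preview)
Your approach is genuinely different from the paper's, and the gap you yourself flag---strict subadditivity for the doubly constrained problem---is real and not obviously closable with the tools you list. The homogeneity $I(m,\lambda p)=\lambda^2 I(m,p)$ you derive from the $L^2$-critical scaling only relates problems at the \emph{same} mass level; it says nothing about a splitting $(m_0,p_0)=(m_1,p_1)+(m_2,p_2)$ in which mass is also divided. In the $L^2$-critical regime this is precisely the case where the usual Cazenave--Lions scaling arguments become delicate, and you have not supplied a substitute. Without strict subadditivity the dichotomy alternative is not excluded, and the whole compactness step collapses.

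The paper sidesteps this difficulty by reducing to a \emph{single} mass constraint. Working in the gauge-equivalent form \eqref{ME} and applying the further substitution $\varphi=e^{\frac{i}{2}cx}\psi$, one obtains (see \eqref{eq:5.2})
\[
\cS_{\omega,c}(e^{\frac{i}{2}cx}\psi)=\cE_c(\psi)+\tfrac{1}{2}\bigl(\omega-\tfrac{c^2}{4}\bigr)\|\psi\|_{L^2}^2,
\qquad
\cE_c(\psi)=\tfrac{1}{2}\|\partial_x\psi\|_{L^2}^2+\tfrac{c}{8}\|\psi\|_{L^4}^4-\tfrac{\gamma}{32}\|\psi\|_{L^6}^6,
\]
so the momentum constraint disappears and the problem becomes: minimize $\cE_c$ on $\{\|\psi\|_{L^2}^2=m\}$. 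The point is that for $b\le -\tfrac{3}{16}$ the velocity $c$ is \emph{necessarily negative}, which makes the quartic term attractive, while $\gamma\le 0$ makes the sextic term repulsive (or absent). Hence $\cE_c$ is coercive modulo a single $L^2$-subcritical attractive term, boundedness below and $-\nu(c,m)<0$ are immediate (Lemma~\ref{lem:5.1}), and compactness follows from Lieb's lemma plus Br\'ezis--Lieb and a one-parameter dilation trick (Proposition~\ref{prop:5.2})---no two-parameter subadditivity is ever needed. The Lagrange multiplier is a single number $\lambda=\omega-\tfrac{c^2}{4}$, and its identification with the prescribed $\omega$ uses only the monotonicity of $\omega\mapsto\|\Phi_{\omega,c}\|_{L^2}^2$ from Lemma~\ref{lem:2.2}.

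In short: your plan could perhaps be pushed through, but the paper's key observation---that the second gauge transformation together with $c<0$ collapses the problem to a standard one-constraint $L^2$-subcritical minimization---yields a much cleaner argument and avoids exactly the obstacle you identified.
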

It may be somewhat new to apply the approach of \cite{CL82} to a two-parameter family of solitons. The key point in the proof is to solve a certain variational problem with mass constraint. To this end we consider the gauge equivalent form \eqref{ME} again. If velocity of the soliton of \eqref{ME} is negative, one can prove that the soliton is a solution of the minimization problem with mass constraint. Since velocity of all solitons for the case $b\leq -\frac{3}{16}$ is negative, we can apply this variational argument to prove stability of these solitons.
We note that the proof of Theorem \ref{thm:1.4} still works for the case $b>-\frac{3}{16}$ and $-2\sqrt{\omega} <c<0$.

One can also apply the abstract theory of \cite{GSS87, GSS90} to exponentially decaying solitons, based on spectral analysis of linearized operators. However, as can be seen in \cite{LSS13b, FHpre}, the calculation of  linearized operators for \eqref{eq:1.1} is complex because the nonlinearity contains derivative. We note that our variational proofs of Theorem \ref{thm:1.2} and \ref{thm:1.4} do not need any calculation of linearized operators.

\subsection{Organization of the paper}
The rest of this paper is organized as follows.
In Section \ref{sec:2}, we recall the fundamental properties of a two-parameter family of solitons of \eqref{eq:1.1} which are used throughout the paper. In Section~\ref{sec:3}, we study the connection between algebraic solitons and exponentially decaying solitons, and prove Theorem \ref{thm:1.1}. In Section~\ref{sec:4}, we study stability of two types of solitons for the case $-\frac{3}{16}<b<0$, and prove Theorem \ref{thm:1.2}. The key claim in the proof is Proposition \ref{prop:4.4}, where we control the flow around the solitons by using the scaling curve \eqref{eq:1.10} effectively. Finally, in Section~\ref{sec:5} we study stability of solitons with negative velocity, and prove Theorem \ref{thm:1.4}.

\section{Preliminaries}
\label{sec:2}
In this section we organize the fundamental properties of solitons of \eqref{eq:1.1}. We refer to \cite{H19} for the proof of the results in this section. 

In next sections, we mainly use the equation \eqref{ME} which is a gauge equivalent form of \eqref{eq:1.1}. Therefore we state the properties of solitons of \eqref{ME}, which also yield the properties of solitons of \eqref{eq:1.1} through the gauge transformation. We first note that \eqref{ME} is transformed from \eqref{eq:1.1} through the gauge transformation 
\begin{align*}
v(t,x)=\cG(u)(t,x)\ce u (t,x)\exp\l( \frac{i}{4} \int_{-\infty}^{x} |u (t,y)|^2 dy\r).
\end{align*}
The equation \eqref{ME} has the following conserved quantities: 
\begin{align*}
\tag{Energy}
 \cE (v) &=\frac{1}{2} \| \del_x v\|_{L^2}^2-\frac{\gamma}{32} \| v\|_{L^6}^6 ,
\\
\tag{Mass}
 \cM (v)&=\| v \|_{L^2}^2,
\\
\tag{Momentum}
\cP (v)&=\rbra[i\del_x v,v] +\frac{1}{4} \| v\|_{L^4}^4.
\end{align*}
We note that the well-posedness in $H^1(\R)$ for each of \eqref{eq:1.1} and \eqref{ME} is equivalent because $u\mapsto\cG(u)$ is locally Lipschitz continuous on $H^1(\R)$.

Let $(\omega ,c )$ satisfy \eqref{eq:1.8}. A two-parameter family of solitons of \eqref{ME} is given by
\begin{align}
\label{eq:2.1}
v_{\omega ,c}(t ,x) =\cG(u_{\omega ,c})(t,x)=e^{i\omega t}\varphi_{\omega ,c}(x-ct),
\end{align}
where $\varphi_{\omega ,c}$ is represented as
\begin{align}
\label{eq:2.2}
\varphi_{\omega ,c} (x) =e^{\frac{i}{2}cx}\Phi_{\omega ,c}(x),
\end{align}
where $\Phi_{\omega ,c}(x)$ is defined by the formulae below \eqref{eq:1.7}.
We note that $\varphi_{\omega ,c}$ satisfies the equation
\begin{align}
\label{eq:2.3}
 -\varphi'' +\omega\varphi +ic\varphi' +\frac{c}{2}|\varphi |^2\varphi
-\frac{3}{16}\gamma |\varphi|^4 \varphi =0,\quad x\in\R.
\end{align}
We define the action functional with respect to \eqref{ME} by
\begin{align*}
\cS_{\omega ,c} (\varphi ) =\cE (\varphi )+\frac{\omega}{2}\cM (\varphi )
+\frac{c}{2}\cP (\varphi ).
\end{align*}
We note that \eqref{eq:2.3} can be rewritten as $\cS_{\omega ,c} ' (\varphi )=0$ and $\varphi_{\omega ,c}$ is a critical point of $\cS_{\omega ,c}$. Concerning the conserved quantities we have the relation
\begin{align*}
\cE (\cG (u) ) =E(u),
~ \cM (\cG (u)) =M(u),~ 
\cP (\cG (u)) =P(u),
\end{align*}
which yields that 
\begin{align}
\label{eq:2.4}
\cS_{\omega ,c} (\varphi_{\omega ,c}) 
=\cS_{\omega ,c} (\cG(\phi_{\omega ,c})) 
=S_{\omega ,c} (\phi_{\omega ,c}) =d(\omega ,c).
\end{align}
In the same way as \eqref{eq:1.11}, for the parameter $s$ satisfying \eqref{eq:1.9} we have 
\begin{align*}
\varphi_{\omega ,2s\sqrt{\omega}}(x) =\omega^{1/4} \varphi_{1,2s} (\sqrt{\omega}x)\quad\text{for}~x\in\R,
\end{align*}
which implies that
\begin{align*}
\cE(\varphi_{\omega ,2s\sqrt{\omega}}) =\omega \cE(\varphi_{1,2s}),~
\cM(\varphi_{\omega ,2s\sqrt{\omega}})=\cM(\varphi_{1,2s}),~
\cP(\varphi_{\omega ,2s\sqrt{\omega}}) =\sqrt{\omega}\cP(\varphi_{1,2s}).
\end{align*}
In particular we have
\begin{align}
\label{eq:2.5}
d(\omega ,2s\sqrt{\omega}) =\omega d(1,2s).
\end{align}

Concerning mass of the solitons we have the following result.  
\begin{lemma}
\label{lem:2.1}
Let $(\omega ,c)$ satisfy \eqref{eq:1.8}. Then we have
\begin{align*}
\cM \l( \varphi_{\omega ,c}\r)  =
\l\{ 
\begin{aligned}
&~\frac{8}{\sqrt{\gamma}} \tan^{-1} \sqrt{ \frac{1+\alpha}{1-\alpha} } &&\text{if}~\gamma>0,\\
&~\frac{4\sqrt{4\omega -c^2}}{-c} &&\text{if}~\gamma=0,\\
&~\frac{4}{\sqrt{-\gamma}} \log \l( -\alpha +\sqrt{\alpha^2 -1}\r) &&\text{if}~\gamma<0,
\end{aligned}
\r.
\end{align*}
where $\alpha \ce c\l( c^2+\gamma (4\omega -c^2)\r)^{-1/2}$.
Furthermore, each of the functions
\begin{align*}
(-1, 1] \ni s \mapsto \cM\l(\varphi_{1,2s} \r) \in \l( 0,  \frac{4\pi}{ \sqrt{\gamma} }\r]\quad \text{if}~\gamma>0
\end{align*}
and 
\begin{align*}
\l( -1, -s_{\ast} \r) \ni s \mapsto \cM\l(\varphi_{1,2s}\r) \in ( 0,  \infty )
\quad \text{if}~\gamma\le 0
\end{align*}
is continuous, strictly increasing and surjective. 
\end{lemma}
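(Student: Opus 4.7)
Since $\varphi_{\omega,c}(x) = e^{icx/2}\Phi_{\omega,c}(x)$ by \eqref{eq:2.2}, one has $|\varphi_{\omega,c}|^2 = \Phi_{\omega,c}^2$, so the task reduces to computing $\cM(\varphi_{\omega,c}) = \int_\R \Phi_{\omega,c}^2(x)\,dx$ from the closed-form formulas recalled in Section \ref{sec:1.2}. Setting $a \ce \sqrt{4\omega - c^2}$ and $\beta \ce \sqrt{c^2 + \gamma(4\omega - c^2)}$, so that $\alpha = c/\beta$, the first formula for $\Phi_{\omega,c}^2$ gives, after $y = ax$,
\[
\cM(\varphi_{\omega,c}) = 2a \int_\R \frac{dy}{\beta\cosh y - c},
\]
and the Weierstrass substitution $t = \tanh(y/2)$ converts this into
\[
4a\int_{-1}^{1}\frac{dt}{(\beta+c)t^2 + (\beta-c)}.
\]

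The plan is to evaluate this integral in each of the three cases. When $\gamma > 0$, both $\beta \pm c$ are positive and one gets an arctangent; using $\beta^2 - c^2 = \gamma a^2$ the answer collapses to $\tfrac{8}{\sqrt{\gamma}}\tan^{-1}\sqrt{(\beta+c)/(\beta-c)}$, which coincides with the stated formula after substituting $\alpha = c/\beta$. When $\gamma = 0$ one has $\beta = -c > 0$, hence $\beta\cosh(ax) - c = -2c\cosh^2(ax/2)$, and the integral collapses to $\int \cosh^{-2} u\,du = 2$, yielding $4\sqrt{4\omega-c^2}/(-c)$. When $\gamma < 0$ we have $\beta + c < 0$, $\beta - c > 0$, and since $\beta^2 < c^2$ the roots of the quadratic lie outside $[-1,1]$, so the integral evaluates to $\tfrac{4}{\sqrt{-\gamma}\,a}\tanh^{-1}\sqrt{(|c|-\beta)/(|c|+\beta)}$. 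The only nontrivial algebraic step is to match this against the stated $\log$--form; this follows by squaring and the identity
\[
\frac{\bigl(\sqrt{|c|+\beta}+\sqrt{|c|-\beta}\bigr)^2}{2\beta} = \frac{|c| + \sqrt{-\gamma}\,a}{\beta} = -\alpha + \sqrt{\alpha^2 - 1},
\]
so that $\tanh^{-1}\sqrt{(|c|-\beta)/(|c|+\beta)} = \tfrac{1}{2}\log\bigl(-\alpha + \sqrt{\alpha^2-1}\bigr)$.

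For the monotonicity statement, I restrict to the scaling curve $\omega = 1$, $c = 2s$. A short computation gives
\[
\alpha(s) = \frac{s}{\sqrt{s^2(1-\gamma) + \gamma}}, \qquad \alpha'(s) = \frac{\gamma}{\bigl(s^2(1-\gamma)+\gamma\bigr)^{3/2}},
\]
so $\alpha$ is strictly increasing when $\gamma > 0$ (sending $(-1,1]$ onto $(-1,1]$) and strictly decreasing when $\gamma < 0$ (sending $(-1,-s_*)$ onto $(-1,-\infty)$). In each case the mass formula is a composition of $\alpha(s)$ with a strictly monotone function on the relevant range of $\alpha$: namely $\alpha \mapsto \tan^{-1}\sqrt{(1+\alpha)/(1-\alpha)}$ is increasing on $(-1,1)$, and $\alpha \mapsto -\alpha + \sqrt{\alpha^2 - 1}$ is decreasing on $(-\infty,-1)$, so that in both cases $s \mapsto \cM(\varphi_{1,2s})$ is strictly increasing. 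The intermediate $\gamma = 0$ case is handled by direct differentiation, which gives $\cM'(s) = 4/(s^2\sqrt{1-s^2}) > 0$. Continuity is immediate from the formulas, and the boundary values plug in to give the claimed images $(0, 4\pi/\sqrt{\gamma}]$ and $(0,\infty)$; surjectivity then follows from the intermediate value theorem. The only delicate step is the identity linking $\tanh^{-1}$ and $\log(-\alpha + \sqrt{\alpha^2-1})$ in the $\gamma < 0$ case; everything else is bookkeeping with standard one-dimensional integrals.
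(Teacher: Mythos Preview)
Your approach is correct and is essentially the only reasonable one: direct integration of the explicit profile followed by elementary monotonicity analysis of the resulting closed forms. Note that the paper does not actually prove Lemma~\ref{lem:2.1}; it simply cites \cite{H19} for all of Section~\ref{sec:2}, so there is no in-paper argument to compare against. Your computation is the natural self-contained substitute.

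Two small points of bookkeeping. First, in the $\gamma<0$ case your intermediate constant is off: the $t$-integral equals $\tfrac{2}{\sqrt{-\gamma}\,a}\tanh^{-1}\!\sqrt{(|c|-\beta)/(|c|+\beta)}$, so after multiplying by $4a$ one gets $\cM=\tfrac{8}{\sqrt{-\gamma}}\tanh^{-1}(\cdot)$, and only then does your identity $\tanh^{-1}(\cdot)=\tfrac12\log(-\alpha+\sqrt{\alpha^2-1})$ produce the stated $\tfrac{4}{\sqrt{-\gamma}}\log(\cdot)$. Second, the lemma also covers the endpoint $c=2\sqrt{\omega}$ when $\gamma>0$, where your $\cosh$ formula degenerates ($a=0$); you implicitly handle this via the limit $\alpha\to1$, $\tan^{-1}\sqrt{(1+\alpha)/(1-\alpha)}\to\pi/2$, giving $\cM=4\pi/\sqrt{\gamma}$, but it is worth saying so explicitly (or integrating the algebraic profile $\Phi_{\omega,2\sqrt{\omega}}^2=4c/((cx)^2+\gamma)$ directly). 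Everything else---the Weierstrass substitution, the sign analysis of $\beta\pm c$, the formula $\alpha'(s)=\gamma/(s^2(1-\gamma)+\gamma)^{3/2}$, and the composition-of-monotone-maps argument---is clean and complete.
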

By Lemma \ref{lem:2.1} and elementary calculations we have the following claim which is useful to study stability of the soliton with $c<0$.
\begin{lemma}
\label{lem:2.2}
Let $(\omega ,c)$ satisfy \eqref{eq:1.8} and $\omega >c^2/4$. Then we have
\begin{align*}
\del_{\omega}\cM( \varphi_{\omega,c}) =
\frac{-8c}{\sqrt{4\omega -c^2}\l\{c^2+\gamma (4\omega -c^2) \r\} }.
\end{align*}
\end{lemma}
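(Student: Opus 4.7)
The plan is to differentiate the closed form expressions for $\cM(\varphi_{\omega,c})$ provided by Lemma~\ref{lem:2.1} with respect to $\omega$, handling each of the three cases ($\gamma>0$, $\gamma=0$, $\gamma<0$) separately and checking that the results match the single unified formula. Writing $D\ce c^2+\gamma(4\omega-c^2)$ so that $\alpha=cD^{-1/2}$, a direct computation gives
$$\partial_\omega\alpha=-\frac{2c\gamma}{D^{3/2}},\qquad 1-\alpha^2=\frac{\gamma(4\omega-c^2)}{D},\qquad \alpha^2-1=\frac{-\gamma(4\omega-c^2)}{D},$$
and these are the main algebraic ingredients that will be fed into the chain rule.

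For $\gamma>0$, the elementary identity $\frac{d}{d\alpha}\tan^{-1}\sqrt{\tfrac{1+\alpha}{1-\alpha}}=\tfrac{1}{2}(1-\alpha^2)^{-1/2}$ combined with $\partial_\omega\alpha$ and the expression for $1-\alpha^2$ above produces, after the cancellations of $\sqrt{\gamma}$ and a power of $\sqrt{D}$, the target formula $\partial_\omega\cM=-8c/[\sqrt{4\omega-c^2}\,D]$. For $\gamma<0$ (so that $\alpha<-1$ by the parameter restriction in \eqref{eq:1.8}), the analogous identity $\frac{d}{d\alpha}\log(-\alpha+\sqrt{\alpha^2-1})=-(\alpha^2-1)^{-1/2}$ leads to the same final expression after using $\sqrt{-\gamma}\sqrt{-\gamma(4\omega-c^2)}=-\gamma\sqrt{4\omega-c^2}$ (the sign coming from $\gamma<0$). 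The borderline case $\gamma=0$ reduces to a one-line differentiation of $\cM=4\sqrt{4\omega-c^2}/(-c)$, giving $-8/[c\sqrt{4\omega-c^2}]$, which agrees with the claimed formula specialized to $D=c^2$.

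There is no real obstacle here: the lemma is pure bookkeeping starting from Lemma~\ref{lem:2.1}, and the only point that requires care is tracking signs of $\sqrt{-\gamma}$ versus $\sqrt{\gamma}$ across the two branches. One could instead attempt a branch-free proof by differentiating $\cM(\varphi_{\omega,c})=\int_{\R}\Phi_{\omega,c}^2\,dx$ directly under the integral using the explicit formula for $\Phi_{\omega,c}^2$, or by exploiting the scaling relation and the Pohozaev-type identities satisfied by $\Phi_{\omega,c}$, but since the closed forms are already on the table the direct differentiation is the shortest route.
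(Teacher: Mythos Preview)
Your proposal is correct and matches the paper's approach exactly: the paper simply states that Lemma~\ref{lem:2.2} follows from Lemma~\ref{lem:2.1} ``and elementary calculations,'' which is precisely the case-by-case differentiation you carry out. Your computations (the derivative identities for $\tan^{-1}\sqrt{(1+\alpha)/(1-\alpha)}$ and $\log(-\alpha+\sqrt{\alpha^2-1})$, the formula for $\partial_\omega\alpha$, and the sign tracking when $\gamma<0$) are all accurate.
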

The momentum of the solitons is represented as follows.
\begin{lemma}
\label{lem:2.3}
Let $(\omega ,c)$ satisfy \eqref{eq:1.8}. Then we have
\begin{align}
\label{eq:2.6}
\cP(\varphi_{\omega ,c})=
\l\{
\begin{aligned}
&\frac{c}{2} \l( -1+ \frac{1}{\gamma} \r) \cM(\varphi_{\omega ,c}) +\frac{2}{\gamma}\sqrt{4\omega -c^2} &&\text{if}~\gamma\gtrless 0,
\\
&-\frac{2\omega +c^2}{3c} \cM(\varphi_{\omega ,c}) &&
\text{if}~\gamma=0.
\end{aligned}
\r.
\end{align}
\end{lemma}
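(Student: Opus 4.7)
The plan is to compute $\cP(\varphi_{\omega,c})$ directly by exploiting the explicit gauge representation of $\varphi_{\omega,c}$ together with the first integral of the ODE that $\Phi_{\omega,c}$ satisfies.

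First I would use \eqref{eq:2.2} to factor out the phase. Since $\varphi_{\omega,c}(x) = e^{icx/2}\Phi_{\omega,c}(x)$ with $\Phi_{\omega,c}$ real and even, a short computation gives $\partial_x\varphi_{\omega,c} = (ic/2)e^{icx/2}\Phi_{\omega,c} + e^{icx/2}\Phi'_{\omega,c}$, and after multiplying by $i\overline{\varphi_{\omega,c}}$ and taking the real part, the $i\Phi_{\omega,c}\Phi'_{\omega,c}$ term contributes nothing. Hence
\begin{align*}
\cP(\varphi_{\omega,c}) \;=\; (i\partial_x\varphi_{\omega,c},\varphi_{\omega,c}) + \tfrac{1}{4}\|\varphi_{\omega,c}\|_{L^4}^4 \;=\; -\tfrac{c}{2}\,\cM(\varphi_{\omega,c}) + \tfrac{1}{4}\|\Phi_{\omega,c}\|_{L^4}^4,
\end{align*}
so the task reduces to expressing $\|\Phi_{\omega,c}\|_{L^4}^4$ in terms of $\cM(\varphi_{\omega,c})$ and $\sqrt{4\omega-c^2}$.

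Next, multiplying \eqref{eq:1.7} by $\Phi'_{\omega,c}$ and integrating, with vanishing boundary contributions at infinity, produces the first integral
\begin{align*}
(\Phi'_{\omega,c})^2 \;=\; A\,\Phi_{\omega,c}^2 + B\,\Phi_{\omega,c}^4 - C\,\Phi_{\omega,c}^6, \qquad A=\omega-\tfrac{c^2}{4},\ B=\tfrac{c}{4},\ C=\tfrac{\gamma}{16}.
\end{align*}
Because $\Phi_{\omega,c}$ is positive, even, and monotonically decreasing from its maximum on $(0,\infty)$, the substitution $t = \Phi_{\omega,c}(x)^2$ (for $x>0$) converts the $L^p$-integrals to
\begin{align*}
\|\Phi_{\omega,c}\|_{L^4}^4 = \int_0^{t_\ast}\frac{t\,dt}{\sqrt{A+Bt-Ct^2}}, \qquad \cM(\varphi_{\omega,c}) = \int_0^{t_\ast}\frac{dt}{\sqrt{A+Bt-Ct^2}},
\end{align*}
where $t_\ast = \Phi_{\omega,c}(0)^2$ is the smallest positive root of $A+Bt-Ct^2 = 0$ (the existence and positivity of $t_\ast$ in each regime of $\gamma$ follow from the parameter constraint \eqref{eq:1.8}).

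For $\gamma \neq 0$ the key algebraic identity is the antiderivative relation
\begin{align*}
\int \frac{t\,dt}{\sqrt{A+Bt-Ct^2}} \;=\; -\tfrac{1}{C}\sqrt{A+Bt-Ct^2} + \tfrac{B}{2C}\int\frac{dt}{\sqrt{A+Bt-Ct^2}},
\end{align*}
verified by differentiation. Since the square root vanishes at $t=t_\ast$ and equals $\sqrt{A} = \tfrac{1}{2}\sqrt{4\omega-c^2}$ at $t=0$, evaluating between $0$ and $t_\ast$ gives
\begin{align*}
\|\Phi_{\omega,c}\|_{L^4}^4 \;=\; \tfrac{\sqrt{A}}{C} + \tfrac{B}{2C}\,\cM(\varphi_{\omega,c}) \;=\; \tfrac{8\sqrt{4\omega-c^2}}{\gamma} + \tfrac{2c}{\gamma}\,\cM(\varphi_{\omega,c}),
\end{align*}
which, when inserted into the expression for $\cP(\varphi_{\omega,c})$ above, yields the formula for $\gamma\gtrless 0$. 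The case $\gamma=0$ is handled separately by the elementary evaluation of $\int_0^{-A/B}\! t(A+Bt)^{-1/2}\,dt$ and comparison with $\cM(\varphi_{\omega,c}) = 4\sqrt{4\omega-c^2}/(-c)$ from Lemma~\ref{lem:2.1}. The main obstacle is not conceptual but bookkeeping: verifying that the substitution is legitimate in each of the three regimes $\gamma>0$, $\gamma<0$, $\gamma=0$, and that $t_\ast$ is indeed the smallest positive root of the relevant polynomial in each case so that the antiderivative evaluation at the endpoint is clean.
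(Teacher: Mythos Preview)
Your argument is correct. The paper does not prove Lemma~\ref{lem:2.3} at all---it defers to \cite{H19} for all the results in Section~\ref{sec:2}---so there is nothing to compare against, but your direct computation via the first integral of \eqref{eq:1.7} and the substitution $t=\Phi_{\omega,c}^2$ is sound, including at the endpoint $c=2\sqrt{\omega}$ where $A=0$ (both boundary terms of $-\tfrac{1}{C}\sqrt{A+Bt-Ct^2}$ vanish and the remaining integrals still converge).
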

Positivity of momentum of the solitons plays an essential role in the stability theory. Concerning the sign of the momentum we have the following result.
\begin{proposition}
\label{prop:2.4}
Let $s$ satisfy \eqref{eq:1.9}. Then the following properties hold:
\begin{enumerate}[\rm (i)]
\item  If $b>0$, there exists a unique $\thickmuskip=1mu s^{*}=s^*(b)\in (0,1)$ such that $\thickmuskip=0mu\thinmuskip=0mu  \cP(\varphi_{1,2s^*})=0$. Moreover, we have $\cP(\varphi_{1,2s})>0$ for $s\in (-1, s^*)$ and $\cP(\varphi_{1,2s})<0$ for $s\in(s^{*},1]$. 

\item If $b =0$, $\cP(\varphi_{1,2s}) >0$ for $s\in (-1,1)$ and $\cP(\varphi_{1,2})=0$.

\item If $b<0$, $\cP(\varphi_{1,2s})>0$ for any $s$.

\end{enumerate}
\end{proposition}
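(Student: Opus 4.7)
The plan is to combine the closed-form expression for $\cP(\varphi_{\omega,c})$ already in Lemma \ref{lem:2.3} with a complementary ``gauge-side'' representation obtained from \eqref{eq:2.2}. First I would plug $\varphi_{\omega,c}(x) = e^{icx/2}\Phi_{\omega,c}(x)$ into the definition of $\cP$; because $\Phi_{\omega,c}$ is real the cross term $i\Phi_{\omega,c}\Phi_{\omega,c}'$ has vanishing real part, and a short computation yields
\[
\cP(\varphi_{\omega, c}) \;=\; -\tfrac{c}{2}\,\cM(\varphi_{\omega, c}) + \tfrac{1}{4}\|\varphi_{\omega, c}\|_{L^4}^4.
\]
Since the second term is strictly positive, this identity immediately gives $\cP(\varphi_{\omega, c}) > 0$ whenever $c \le 0$. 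This alone settles (iii) for every $s \le 0$ (which in particular covers the full range $b \le -\tfrac{3}{16}$), and reduces what remains of (i), (ii), (iii) to the subrange $s > 0$.

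For $s > 0$ I rewrite Lemma \ref{lem:2.3} (for $\gamma \ne 0$) as $\gamma\,\cP(\varphi_{1, 2s}) = s(1-\gamma)\,\cM(\varphi_{1, 2s}) + 4\sqrt{1-s^2}$. In case (ii) we have $\gamma = 1$ and this collapses to $\cP(\varphi_{1, 2s}) = 4\sqrt{1-s^2}$, which is positive on $(-1, 1)$ and zero at $s=1$. In the remaining part of (iii), where $-\tfrac{3}{16} < b < 0$ and hence $0 < \gamma < 1$, both summands are nonnegative on $(0, 1]$ with at least one strictly positive, so $\cP(\varphi_{1, 2s}) > 0$.

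The only delicate subcase is (i) with $s > 0$, where $\gamma > 1$. Setting $f(s) := \gamma\,\cP(\varphi_{1, 2s}) = -(\gamma-1)\,s\,\cM(\varphi_{1, 2s}) + 4\sqrt{1-s^2}$, I have $f(0) = 4 > 0$ and $f(1) = (1-\gamma)\,\cM(\varphi_{1, 2}) < 0$, so existence of a zero in $(0, 1)$ follows from the intermediate value theorem. For uniqueness I would argue that $f$ is strictly decreasing on $[0, 1]$: the map $s \mapsto s\,\cM(\varphi_{1, 2s})$ is a product of two strictly increasing positive functions on $(0, 1]$, where the monotonicity of $\cM$ is Lemma \ref{lem:2.1}, so $-(\gamma-1)\,s\,\cM(\varphi_{1, 2s})$ is strictly decreasing; and $4\sqrt{1-s^2}$ is trivially strictly decreasing. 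Hence the zero $s^{*}$ is unique, and the signs of $\cP(\varphi_{1, 2s})$ on the two subintervals $(0, s^{*})$ and $(s^{*}, 1]$ follow. The step I expect to require the most care is the first identity; once the phase factor $e^{icx/2}$ has been accounted for correctly, the rest of the argument is essentially a sign audit on the two explicit expressions.
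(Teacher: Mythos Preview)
Your argument is correct. The paper itself does not prove Proposition~\ref{prop:2.4}; it is one of the results in Section~\ref{sec:2} that are simply quoted from \cite{H19}. So there is no in-paper proof to compare against, but your approach is self-contained and uses only the ingredients already stated in the paper (Lemmas~\ref{lem:2.1} and~\ref{lem:2.3} together with the representation~\eqref{eq:2.2}).

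A couple of confirmations on the points you flagged. The identity
\[
\cP(\varphi_{\omega,c})=-\tfrac{c}{2}\,\cM(\varphi_{\omega,c})+\tfrac14\|\varphi_{\omega,c}\|_{L^4}^4
\]
is indeed immediate from \eqref{eq:2.2}: writing $\varphi=e^{icx/2}\Phi$ with $\Phi$ real, one has $i\partial_x\varphi\,\overline{\varphi}=-\tfrac{c}{2}\Phi^2+i\Phi'\Phi$, and the imaginary part drops under $\re\int$. This already disposes of all cases with $c\le 0$, in particular the entire range $b\le -\tfrac{3}{16}$, since there $s<-s_\ast\le 0$. For the monotonicity step in~(i), your use of Lemma~\ref{lem:2.1} is exactly what is needed: on $(0,1]$ both factors $s$ and $\cM(\varphi_{1,2s})$ are strictly positive and strictly increasing, so their product is strictly increasing on $[0,1]$ (the value at $s=0$ being $0$), and $4\sqrt{1-s^2}$ is strictly decreasing; hence $f$ has a unique zero $s^\ast\in(0,1)$, and the sign pattern follows. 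Finally, Lemma~\ref{lem:2.3} is stated for all $(\omega,c)$ satisfying~\eqref{eq:1.8}, which for $\gamma>0$ includes the endpoint $c=2\sqrt{\omega}$, so evaluating the formula at $s=1$ is legitimate.
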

Finally we state the energy of the solitons. The following claim is an immediate consequence of the Pohozaev identity. 
\begin{lemma}
\label{lem:2.5}
Let $s$ satisfy \eqref{eq:1.9}. Then we have
\begin{align*}
\cE(\varphi_{1,2s}) =-\frac{s}{2} \cP(\varphi_{1, 2s}).
\end{align*}
\end{lemma}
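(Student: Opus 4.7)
The plan is to exploit the $L^2$-critical scaling \eqref{eq:1.3} at the level of the profile. Concretely, for $\varphi=\varphi_{1,2s}\in H^1(\R)$ I define the one-parameter family
\[
\varphi_\lambda(x)\ce\lambda^{1/2}\varphi(\lambda x),\qquad\lambda>0,
\]
which is a smooth curve in $H^1(\R)$ with $\varphi_1=\varphi$. Since $\varphi$ is a critical point of $\cS_{1,2s}$, the chain rule gives
\[
\left.\frac{d}{d\lambda}\cS_{1,2s}(\varphi_\lambda)\right|_{\lambda=1}=0,
\]
and the identity of Lemma \ref{lem:2.5} will fall out of this single differentiation once I record the homogeneity of each conserved functional along the curve $\varphi_\lambda$.

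A direct change of variables shows that
\[
\|\del_x\varphi_\lambda\|_{L^2}^2=\lambda^2\|\del_x\varphi\|_{L^2}^2,\quad
\|\varphi_\lambda\|_{L^6}^6=\lambda^2\|\varphi\|_{L^6}^6,\quad
\|\varphi_\lambda\|_{L^4}^4=\lambda\|\varphi\|_{L^4}^4,
\]
\[
\|\varphi_\lambda\|_{L^2}^2=\|\varphi\|_{L^2}^2,\qquad
\rbra[i\del_x\varphi_\lambda,\varphi_\lambda]=\lambda\,\rbra[i\del_x\varphi,\varphi].
\]
Reading off from the definitions of $\cE,\cM,\cP$ in Section~\ref{sec:2} I obtain
\[
\cE(\varphi_\lambda)=\lambda^2\cE(\varphi),\qquad
\cM(\varphi_\lambda)=\cM(\varphi),\qquad
\cP(\varphi_\lambda)=\lambda\cP(\varphi),
\]
which is simply the well-known fact that the curve $\varphi_\lambda$ is $L^2$-critical.

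Plugging these homogeneities into $\cS_{1,2s}(\varphi_\lambda)=\cE(\varphi_\lambda)+\tfrac{1}{2}\cM(\varphi_\lambda)+s\,\cP(\varphi_\lambda)$ and differentiating at $\lambda=1$ yields
\[
0=\left.\frac{d}{d\lambda}\cS_{1,2s}(\varphi_\lambda)\right|_{\lambda=1}
=2\cE(\varphi)+s\,\cP(\varphi),
\]
which is precisely $\cE(\varphi_{1,2s})=-\tfrac{s}{2}\cP(\varphi_{1,2s})$. No serious obstacle is anticipated here: the only points to check are that $\varphi_\lambda$ is differentiable in $\lambda$ as an $H^1$-valued curve (which is standard from the explicit formulae in Section~\ref{sec:1.2}, giving enough regularity and decay), and that $\cS'_{1,2s}(\varphi)=0$ may be paired with the tangent vector $\tfrac{d}{d\lambda}\varphi_\lambda|_{\lambda=1}=\tfrac{1}{2}\varphi+x\varphi'$, which lies in $H^{-1}\cap L^2$-dualities covered by the integrability afforded by the explicit profiles.
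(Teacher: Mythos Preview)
Your proof is correct and is essentially the same approach the paper has in mind: the paper says the lemma is ``an immediate consequence of the Pohozaev identity,'' and your computation---differentiating $\cS_{1,2s}$ along the $L^2$-critical scaling curve $\varphi_\lambda$---is precisely the variational derivation of that identity (the tangent $\tfrac{1}{2}\varphi+x\varphi'$ being the Pohozaev multiplier). The only care needed, which you flag, is the endpoint $s=1$ where $x\varphi'\notin L^2$; there the pairing is justified by the pointwise vanishing of $\cS'_{1,2}(\varphi_{1,2})$ together with the explicit algebraic decay of $\Phi_{1,2}$, or equivalently by integrating over $[-R,R]$ and checking the boundary terms vanish.
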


\section{Connection between two types of the solitons}
\label{sec:3}
In this section we study connection between algebraic solitons and exponential decaying solitons, and prove Theorem \ref{thm:1.1}. From the scaling relation \eqref{eq:1.11}, it is enough to discuss the convergence of $\phi_{1,2s}$ as $s\to 1$. First we prove the pointwise convergence.
\begin{proposition}
\label{prop:3.1}
 Let $b>-\frac{3}{16}$. For any $x\in\R$ we have 
\begin{align*}
\lim_{s\to 1-0}\phi_{1,2s}(x) =\phi_{1,2} (x).
\end{align*}
\end{proposition}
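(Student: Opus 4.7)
The plan is to prove the pointwise convergence by working directly with the explicit soliton formulas stated just below \eqref{eq:1.7} and then transferring the information through the gauge transformation \eqref{eq:1.6}. Set $\eps = 1 - s^2 > 0$, so $s \to 1^-$ is equivalent to $\eps \to 0^+$, and rewrite the formula for the exponentially decaying branch at $(\omega,c) = (1,2s)$ as
$$
\Phi_{1,2s}^2(x) = \frac{4\eps}{\sqrt{s^2 + \gamma\eps}\,\cosh(2\sqrt{\eps}\,x) - s}.
$$
A first-order Taylor expansion in $\eps$ gives $\sqrt{s^2 + \gamma\eps} = s + \gamma\eps/(2s) + O(\eps^2)$ and $\cosh(2\sqrt{\eps}\,x) = 1 + 2\eps x^2 + O(\eps^2)$, so the denominator equals $\eps\bigl(2sx^2 + \gamma/(2s)\bigr) + O(\eps^2)$. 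Dividing numerator and denominator by $\eps$ and sending $s \to 1^-$ yields the pointwise limit $\Phi_{1,2s}^2(x) \to 8/(\gamma + 4x^2) = \Phi_{1,2}^2(x)$ for every $x \in \R$; taking positive square roots gives $\Phi_{1,2s}(x) \to \Phi_{1,2}(x)$.

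To upgrade this to the stated convergence of $\phi_{1,2s}(x)$, I still need to control the integral $\int_{-\infty}^x \Phi_{1,2s}(y)^2\,dy$ appearing in \eqref{eq:1.6}. The plan is to invoke dominated convergence, which requires an $s$-uniform integrable majorant for $\Phi_{1,2s}^2$. Using $\cosh t \ge 1 + t^2/2$ together with $\gamma > 0$ and, say, $s \in [1/2, 1)$, one obtains
$$
\sqrt{s^2 + \gamma\eps}\,\cosh(2\sqrt{\eps}\,y) - s \;\ge\; 2s\eps y^2 + \bigl(\sqrt{s^2 + \gamma\eps} - s\bigr) \;\ge\; \eps\bigl(y^2 + c_0\bigr),
$$
where $c_0 = c_0(\gamma) > 0$, since $\sqrt{s^2+\gamma\eps} - s = \gamma\eps/(\sqrt{s^2+\gamma\eps}+s)$ is bounded below by a positive multiple of $\eps$. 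Hence $\Phi_{1,2s}^2(y) \le C/(y^2 + c_0)$ uniformly in $s$ close to $1$, which is integrable on $\R$, and dominated convergence gives $\int_{-\infty}^x \Phi_{1,2s}^2 \to \int_{-\infty}^x \Phi_{1,2}^2$ for each fixed $x$.

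Combining the two steps with the gauge transformation \eqref{eq:1.6} and continuity of the complex exponential yields $\phi_{1,2s}(x) \to \phi_{1,2}(x)$, finishing the proof. The only nontrivial point is producing the uniform dominating function in the second step; the Taylor expansion of the first step is straightforward, and the gauge factor is harmless once the pointwise limit of the integral is established.
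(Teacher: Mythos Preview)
Your proof is correct and follows essentially the same approach as the paper: Taylor-expand the denominator of $\Phi_{1,2s}^2(x)$ in $\eps=1-s^2$ to cancel the common factor and obtain pointwise convergence of $\Phi_{1,2s}$, then pass through the gauge formula \eqref{eq:1.6}. The only difference is that the paper simply cites the dominated convergence theorem for the integral in \eqref{eq:1.6} without exhibiting a majorant, whereas you explicitly construct one via $\cosh t\ge 1+t^2/2$; your version is therefore slightly more complete on that point.
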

\begin{proof}
Fix any $x\in\R$. 
It is enough to prove that
\begin{align}
\label{eq:3.1}
\lim_{s\to 1-0}\Phi_{1,2s}(x) =\Phi_{1,2} (x),
\end{align}
because of the relation \eqref{eq:1.6} and the dominated convergence theorem.
From the explicit formula of $\Phi_{\omega,c}$, we have
\begin{align}
\label{eq:3.2}
\Phi_{1,2s}^2(x) = \frac{4(1-s^2)}{ \sqrt{s^2+\gamma (1-s^2)}\cosh\l( 2\sqrt{1-s^2} x\r) -s  }
\end{align}
for $s \in (-1, 1)$. 
By the Taylor expansion of $x\mapsto\cosh x$ around zero, the denominator is rewritten as
\begin{align}
\label{eq:3.3}
\sqrt{s^2+\gamma (1-s^2)}\l( 1+2(1-s^2)x^2+O\l( (1-s^2)^2\r)\r) -s.
\end{align}
By the Taylor expansion of the function $h\mapsto\sqrt{s^2+h}$ around zero, we have
\begin{align*}
\sqrt{s^2+\gamma (1-s^2)} -s =\frac{\gamma}{2s}(1-s^2)+O\l( (1-s^2)^2\r),
\end{align*}
which is valid for $s\in(0,1)$. Thus we have
\begin{align*}
(\ref{eq:3.3})&=\frac{\gamma}{2s}(1-s^2)+2(1-s^2)\sqrt{s^2+\gamma (1-s^2)}x^2 +O\l( (1-s^2)^2\r)\\
&=(1-s^2) \l( \frac{\gamma}{2s}+2\sqrt{s^2+\gamma (1-s^2)}x^2 +O\l( 1-s^2\r) \r).
\end{align*}
We note that the numerator and denominator share a common factor $1-s^2$. Therefore we deduce that
\begin{align*}
\Phi_{1,2s}^2(x) &=\frac{4}{ \frac{\gamma}{2s}+2\sqrt{s^2+\gamma (1-s^2)}x^2 +O\l( 1-s^2\r) } \\
&\hspace{-10pt}\underset{s\to 1-0}{\longrightarrow} \frac{8}{\gamma +4x^2} =\Phi_{1,2}^2(x),
\end{align*}
which proves \eqref{eq:3.1}.
\end{proof}
To complete the proof of Theorem~\ref{thm:1.1}, we effectively use the Br\'ezis--Lieb lemma:
\begin{lemma}[{\cite{BL83}}]
\label{lem:3.2}
Let $1\leq p < \infty$. Let $\{f_n\}_{n\in\N}$ be a bounded sequence in $L^p(\R)$ and $f_n \to f$ a.e. in $\R$ as $n\to \infty$. Then we have 
\begin{align*}
\| f_n\|_{L^p}^p - \| f_n-f\|_{L^p}^p - \| f \|_{L^p}^p \to 0
\end{align*}
as $n \to \infty$. 
\end{lemma}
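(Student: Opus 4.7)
The plan is to combine pointwise a.e.\ convergence with a dominated-convergence argument, the main subtlety being that the natural dominating functions are \emph{not} just $|f_n|^p+|f|^p$ (which is only bounded, not uniformly integrable), so we must subtract out the bulk and control only the remainder. First I would observe that by Fatou's lemma applied to $|f_n|^p$, the limit $f$ belongs to $L^p(\R)$, so every quantity below is meaningful. The pointwise a.e.\ convergence $f_n\to f$ immediately yields
\begin{align*}
g_n(x) \ce \bigl|\, |f_n(x)|^p - |f_n(x)-f(x)|^p - |f(x)|^p\,\bigr| \longrightarrow 0 \quad \text{a.e.\ on } \R,
\end{align*}
and the whole task is to upgrade this to $L^1$ convergence.

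The key tool is the elementary inequality: for every $\eps>0$ there exists $C_\eps>0$ such that
\begin{align*}
\bigl|\,|a+b|^p - |a|^p\,\bigr| \leq \eps\,|a|^p + C_\eps\,|b|^p \quad \text{for all } a,b\in\C.
\end{align*}
This follows from the inequality $\bigl|\,|a+b|^p-|a|^p\bigr|\le p\,(|a|+|b|)^{p-1}|b|$ (from the mean value theorem applied to $t\mapsto|a+tb|^p$) followed by Young's inequality. Applying it with $a=f_n-f$ and $b=f$ gives
\begin{align*}
g_n \leq \eps\,|f_n-f|^p + (1+C_\eps)\,|f|^p \quad \text{a.e.}
\end{align*}
Now I would introduce the truncation
\begin{align*}
W_n^\eps \ce \max\bigl(\,g_n - \eps\,|f_n-f|^p\,,\,0\,\bigr),
\end{align*}
so that $0\le W_n^\eps \le (1+C_\eps)|f|^p \in L^1(\R)$ and $W_n^\eps\to 0$ a.e.

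Next, the dominated convergence theorem yields $\int_\R W_n^\eps\,dx \to 0$ as $n\to\infty$ for each fixed $\eps>0$. Since $g_n \le W_n^\eps + \eps\,|f_n-f|^p$ pointwise, and since boundedness of $\{f_n\}$ in $L^p$ together with Fatou gives $\sup_n\|f_n-f\|_{L^p}^p \le K$ for some constant $K$, we obtain
\begin{align*}
\limsup_{n\to\infty}\int_\R g_n\,dx \leq \limsup_{n\to\infty}\int_\R W_n^\eps\,dx + \eps K = \eps K.
\end{align*}
Letting $\eps\to 0$ concludes the proof.

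The main obstacle is precisely the step that forces the introduction of $W_n^\eps$: pointwise convergence of $g_n$ to zero is not by itself enough, because the naive bound $g_n \le 2|f_n|^p + 2|f|^p$ only gives an $L^1$-bounded sequence, not one with a fixed integrable majorant. The elementary inequality above is exactly what lets us split $g_n$ into a piece already controlled in $L^p$-norm (the $\eps\,|f_n-f|^p$ tail) and a piece dominated by a fixed function in $L^1$, which is the essential bookkeeping that makes the Brézis--Lieb lemma work.
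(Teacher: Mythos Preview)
Your proof is correct and is essentially the original argument of Br\'ezis and Lieb. The paper does not give its own proof of this lemma; it simply cites \cite{BL83} and uses the result as a black box, so there is nothing further to compare.
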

\begin{proof}[Proof of Theorem \ref{thm:1.1}]
From Lemma \ref{lem:2.1} and Proposition \ref{prop:3.1}, we have
\begin{align*}
&\lim_{s\to 1-0}\phi_{1,2s}(x) =\phi_{1,2} (x) ~\text{for all}~x\in\R, \\
&\lim_{s\to 1-0} \l\| \phi_{1,2s}\r\|_{L^2}^2  =\l\| \phi_{1,2}\r\|_{L^2}^2 .
\end{align*}
Applying Lemma \ref{lem:3.2}, we have
\begin{align*}
\lim_{s\to 1-0} \| \phi_{1,2s} -\phi_{1,2}\|_{L^2}^2 =0.
\end{align*}
In the same way we also have
\begin{align}
\label{eq:3.4}
\lim_{s\to 1-0} \| \Phi_{1,2s} -\Phi_{1,2}\|_{L^2}^2 =0.
\end{align}
Here we recall that $\Phi_{1,2s}$ is a solution of the equation
\begin{align}
\label{eq:3.5}
-\Phi'' +(1-s^2) \Phi +s|\Phi|^2\Phi -\frac{3}{16}\gamma |\Phi|^4\Phi  =0,\quad x\in\R.
\end{align}
We note that
\begin{align*}
\| \Phi_{1,2s}\|_{L^{\infty}}^2 &=\Phi^2_{1,2s} (0) \\
 &= \frac{ 4(1-s^2) }{ \sqrt{s^2+\gamma (1-s^2)}-s } \\
 &=\frac{4}{\gamma} \l( \sqrt{s^2+\gamma (1-s^2)}+s \r).
\end{align*}
This formula yields that the function $(-1,1)\ni s\mapsto \| \Phi_{1,2s}\|_{L^{\infty}}$ is strictly increasing and
\begin{align*}
\lim_{s\to 1-0} \| \Phi_{1,2s}\|_{L^{\infty}}^2 =\frac{8}{\gamma} = \| \Phi_{1,2}\|_{L^{\infty}}^2.
\end{align*}
In particular we have
\begin{align}
\label{eq:3.6}
\max_{s\in (-1,1]} \| \Phi_{1,2s}\|_{L^{\infty}} = \| \Phi_{1,2}\|_{L^{\infty}}.
\end{align}
By \eqref{eq:3.4} and \eqref{eq:3.6} we obtain that
\begin{align*}
\| s\Phi_{1,2s}^3-\Phi_{1,2}^3\|_{L^2} &\leq (1-s) \| \Phi_{1,2s}^3\|_{L^2} +\| \Phi_{1,2s}^3-\Phi_{1,2}^3\|_{L^2} \\
&\leq (1-s)\| \Phi_{1,2}\|_{L^{\infty}}^2\| \Phi_{1,2}\|_{L^2} 
+3\| \Phi_{1,2}\|_{L^{\infty}}^2\| \Phi_{1,2s}-\Phi_{1,2}\|_{L^2}\\
&\hspace{-5pt}\underset{s\to 1-0}{\longrightarrow} 0.
\end{align*}
Similarly, we have
\begin{align*}
\| \Phi_{1,2s}^5-\Phi_{1,2}^5 \|_{L^2} &\leq 4\|\Phi_{1,2}\|_{L^{\infty}}^4\| \Phi_{1,2s}-\Phi_{1,2}\|_{L^2}
\underset{s\to 1-0}{\longrightarrow} 0.
\end{align*}
Therefore, by using the equation (\ref{eq:3.5}), we deduce that
\begin{align*}
\| \Phi_{1,2s}''-\Phi_{1,2}''\|_{L^2}&\leq (1-s^2)\| \Phi_{1,2s} \|_{L^2}^2+\| s\Phi_{1,2s}^3-\Phi_{1,2}^3\|_{L^2}\\
&\qquad +\frac{3}{16}\gamma\| \Phi_{1,2s}^5-\Phi_{1,2}^5 \|_{L^2}
\underset{s\to 1-0}{\longrightarrow} 0.
\end{align*}
Combined with \eqref{eq:3.4} we have 
\begin{align*}
\lim_{s\to 1-0} \| \Phi_{1,2s} -\Phi_{1,2}\|_{H^2} =0.
\end{align*}
By using the formula \eqref{eq:1.6}, we deduce that
\begin{align*}
\lim_{s\to 1-0} \| \phi_{1,2s} -\phi_{1,2}\|_{H^2} =0.
\end{align*}
The rest of the proof is done by using the equation \eqref{eq:1.5} and a standard bootstrap argument.
\end{proof}
\section{Stability of two types of solitons}
\label{sec:4}
In this section we study stability of two types of solitons for the case $-\frac{3}{16}<b<0$, and prove Theorem \ref{thm:1.2}.

\subsection{Variational characterization}
\label{sec:4.1}
In this subsection we recall variational properties of the solitons of \eqref{ME}. Here we assume that $b$ and $(\omega ,c)$ satisfy
\begin{align}
\label{eq:4.1}
b>-\frac{3}{16}\l(\Leftrightarrow\gamma >0 \r),~ -2\sqrt{\omega} <c\leq 2\sqrt{\omega}.
\end{align}
First we define the function space by
\begin{align*}
\varphi \in X_{\omega,c} &\iff
\l\{
\begin{array}{ll}
\ds \varphi \in H^1(\R) & \ds\text{if}~ \omega >c^2/4,
\\[5pt]
\ds e^{-\frac{i}{2}cx}\varphi \in \dot{H}^1(\R) \cap L^{4}(\R) & \ds\text{if}~c=2\sqrt{\omega},
\end{array}
\r.
\end{align*}
where the norm of $X_{c^2/4,c}$ is defined by
\begin{align*}
\| \varphi\|_{ X_{c^2/4,c} }&=\| e^{-\frac{i}{2}c\cdot}\varphi\|_{\dot{H}^1\cap L^4}.
\end{align*}
We note that $H^1(\R) \subset X_{c^2/4,c}$. We define the functional $\cK_{\omega ,c}$ by
\begin{align*}
\cK_{\omega ,c}(\varphi )&=\l.\frac{d}{d\lambda}\cS_{\omega ,c}(\lambda\varphi)\r|_{\lambda=1}
\\
&= \| \del_x\varphi\|_{L^2}^2+\omega\|\varphi\|_{L^2}^2
+c\rbra[i\del_x\varphi, \varphi] +\frac{c}{2}\| \varphi\|_{L^4}^4 -\frac{3}{16}\gamma\| \varphi\|_{L^6}^6.
\end{align*}
We note that the quadratic terms in the functional are rewritten as 
\begin{align*}
 \| \del_x\varphi\|_{L^2}^2+\omega\|\varphi\|_{L^2}^2
+c\rbra[i\del_x\varphi, \varphi] 
=\norm[ \del_x\l(e^{-\frac{i}{2}c\cdot}\varphi\r)]_{L^2}^2+\l(\omega-\frac{c^2}{4}\r)\|\varphi\|_{L^2}^2.
\end{align*}
From this relation, the functionals $\cS_{\omega,c}$ and $\cK_{\omega,c}$ are well-defined in the space $X_{\omega,c}$. A similar observation is already done in \cite{FHI17, H19}.

Now we consider the following minimization problem:
\begin{align*}
\mu (\omega ,c) &=\inf \l\{ \cS_{\omega ,c}(\varphi) :\varphi \in X_{\omega ,c}\setminus \{ 0\} 
, \cK_{\omega ,c}(\varphi )=0  \r\},
\\
\scM_{\omega ,c}&=\l\{ \varphi \in X_{\omega ,c}\setminus\{ 0\} :
\cS_{\omega ,c}(\varphi) =\mu (\omega ,c), \cK_{\omega ,c}(\varphi )=0  \r\}.
\end{align*}
We note that $\scM_{\omega ,c}$ is the set of minimizers of $\cS_{\omega ,c}$ on the Nehari manifold. The following result gives a variational characterization of the solitons on the Nehari manifold.
\begin{proposition}[\cite{H19}]
\label{prop:4.1}
Assume \eqref{eq:4.1}. Then we have
\begin{align*}
\scM_{\omega ,c} =\l\{ e^{i\theta_0}\varphi_{\omega ,c}(\cdot -x_0):\theta_0\in[0,2\pi), x_0\in\R \r\},
\end{align*}
and $d(\omega ,c)=\mu (\omega ,c)$, where $d(\omega ,c)=\cS_{\omega ,c}(\varphi_{\omega ,c})$ \textup{(}see \eqref{eq:2.4}\textup{)}.
\end{proposition}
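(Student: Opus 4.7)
The plan is to reduce the constrained minimization to a standard Nehari problem for the ``defocusing'' double-power equation \eqref{eq:1.7} via the gauge substitution $\psi \ce e^{-\frac{i}{2}cx}\varphi$, which sends $X_{\omega,c}$ onto $H^1(\R)$ when $\omega>c^2/4$ and onto $\dot H^1(\R)\cap L^4(\R)$ when $c=2\sqrt{\omega}$. A direct computation using $\|\partial_x\psi\|_{L^2}^2=\|\partial_x\varphi\|_{L^2}^2+c\,\rbra[i\partial_x\varphi,\varphi]+\tfrac{c^2}{4}\|\varphi\|_{L^2}^2$ yields
\begin{align*}
\cS_{\omega,c}(\varphi) &= \tfrac{1}{2}\|\partial_x\psi\|_{L^2}^2 + \tfrac{1}{2}\bigl(\omega-\tfrac{c^2}{4}\bigr)\|\psi\|_{L^2}^2 + \tfrac{c}{8}\|\psi\|_{L^4}^4 - \tfrac{\gamma}{32}\|\psi\|_{L^6}^6, \\
\cK_{\omega,c}(\varphi) &= \|\partial_x\psi\|_{L^2}^2 + \bigl(\omega-\tfrac{c^2}{4}\bigr)\|\psi\|_{L^2}^2 + \tfrac{c}{2}\|\psi\|_{L^4}^4 - \tfrac{3\gamma}{16}\|\psi\|_{L^6}^6,
\end{align*}
so the Euler--Lagrange equation (with vanishing Lagrange multiplier, verified below) is precisely \eqref{eq:1.7} for $\psi$. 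Since $\Phi_{\omega,c}$ is the unique positive even solution of \eqref{eq:1.7} (classical one-dimensional ODE analysis for the double-power nonlinearity), any minimizer must coincide with $\Phi_{\omega,c}$ modulo translation, and hence with $\varphi_{\omega,c}$ modulo phase and translation.

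For existence and coercivity, set $A(\psi)\ce\|\partial_x\psi\|_{L^2}^2+(\omega-c^2/4)\|\psi\|_{L^2}^2$. The algebraic identity
\begin{align*}
\cS_{\omega,c}(\varphi) - \tfrac{1}{4}\cK_{\omega,c}(\varphi) = \tfrac{1}{4}A(\psi) + \tfrac{\gamma}{64}\|\psi\|_{L^6}^6
\end{align*}
has nonnegative right-hand side (since $\gamma>0$ and $\omega\geq c^2/4$) and equals $\cS_{\omega,c}(\varphi)$ on $\{\cK_{\omega,c}=0\}$; it controls $\|\psi\|_{X_{\omega,c}}$ and $\|\psi\|_{L^6}$, with $\|\psi\|_{L^4}$ recovered by interpolation. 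Hence $\mu(\omega,c)>0$ and every minimizing sequence $\{\psi_n\}$ is bounded. By the diamagnetic inequality one may assume $\psi_n\geq 0$ without increasing $\cS_{\omega,c}$, and Schwarz symmetrization (Polya--Szeg\H{o} lowers $\|\partial_x\psi_n\|_{L^2}$ while preserving the remaining $L^p$-terms) makes $\psi_n$ symmetric decreasing. The compactness of the embedding of radial $H^1(\R)$ into $L^p$ for $2<p<\infty$ then gives strong $L^p$-convergence, and Brezis--Lieb pins down a nontrivial limit $\psi_*$ with $\cK_{\omega,c}(\psi_*)=0$. That the Lagrange multiplier vanishes is a direct computation:
\begin{align*}
\tfrac{d}{d\lambda}\cK_{\omega,c}(\lambda\psi_*)\big|_{\lambda=1} = -2A(\psi_*)-\tfrac{3\gamma}{8}\|\psi_*\|_{L^6}^6 < 0
\end{align*}
on the Nehari constraint, which forces $\cS_{\omega,c}'(\psi_*)=0$.

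The main obstacle is the algebraic boundary case $c=2\sqrt{\omega}$: $A(\psi)$ reduces to $\|\partial_x\psi\|_{L^2}^2$ alone, the ambient space $\dot H^1\cap L^4$ is scaling invariant, and minimizing sequences can lose compactness simultaneously under translation and dilation. One option is a direct concentration-compactness argument adapted to $\dot H^1\cap L^4$, pinning down the dilation parameter through a normalization of $\|\psi_n\|_{L^6}$. An alternative more in the spirit of Theorem~\ref{thm:1.1} is to establish the interior case $\omega>c^2/4$ by the $H^1$ argument above and then pass to the boundary: the strong convergence $\varphi_{\omega,c_n}\to\varphi_{\omega,2\sqrt{\omega}}$ as $c_n\to 2\sqrt{\omega}^-$ provided by Theorem~\ref{thm:1.1}, together with the continuity of $\cS_{\omega,c}$ and $\cK_{\omega,c}$ along such sequences and a density argument approximating any competitor in $X_{\omega,2\sqrt{\omega}}$ by competitors in $X_{\omega,c_n}$, yields the boundary characterization and transports the uniqueness from the interior.
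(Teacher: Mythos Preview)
The paper does not supply its own proof of Proposition~\ref{prop:4.1}; the result is quoted from \cite{H19}. Your sketch for the interior case $\omega>c^2/4$ follows the standard Nehari route and is essentially sound, with two minor corrections. First, replacing $\psi_n$ by $|\psi_n|$ and then by its symmetric-decreasing rearrangement does not preserve the constraint $\cK_{\omega,c}=0$: it can only push $\cK_{\omega,c}$ downward, since $A$ decreases while the $L^p$-terms are unchanged. The clean fix is to work throughout with the equivalent min-max characterization $\mu(\omega,c)=\inf_{\psi\ne 0}\max_{\lambda>0}\cS_{\omega,c}(\lambda\psi)$, for which your identity $\cS-\tfrac14\cK=\tfrac14 A+\tfrac{\gamma}{64}\|\psi\|_{L^6}^6$ shows the inner maximum is monotone in $A$, and then project back to the manifold at the end. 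Second, the phrase ``compactness of radial $H^1(\R)$ into $L^p$'' is false for merely even functions in one dimension; what you are actually using is the pointwise bound $\psi^*(x)\le (2|x|)^{-1/2}\|\psi^*\|_{L^2}$ valid for symmetric-decreasing functions, which does give uniform $L^p$-tightness for $p>2$.

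The real gap is the endpoint $c=2\sqrt\omega$. Your second route---carrying the interior characterization to the boundary via Theorem~\ref{thm:1.1} plus density---breaks down: a generic competitor $\varphi\in X_{c^2/4,c}$ has $e^{-ic\cdot/2}\varphi\in\dot H^1\cap L^4$ but need not lie in $L^2$, and for any $H^1$-approximant the term $(\omega-c_n^2/4)\|\cdot\|_{L^2}^2$ in the interior action is uncontrolled as $c_n\uparrow 2\sqrt\omega$, so no comparison with $d(\omega,c_n)$ is available. The viable path is your first option, a direct concentration-compactness argument in the scaling-critical space $\dot H^1\cap L^4$: fix the dilation (for instance by normalizing $\|\psi_n\|_{L^6}$), use the constraint $\cK=0$ to rule out vanishing, and establish strict subadditivity of the level to rule out dichotomy. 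That is a genuine piece of analysis and cannot be replaced by the limiting procedure from the interior that you sketch.
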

Here we introduce the following potential wells in the energy space: 
\begin{align*}
\scA_{\omega ,c}^{+}&=\l\{ v\in H^1(\R)\setminus\{0\} : \cS_{\omega ,c}(v) <d(\omega ,c), 
\cK_{\omega ,c}(v)>0\r\},\\
\scB_{\omega ,c}^{+}&=\l\{ v\in H^1(\R)\setminus\{0\} : \cS_{\omega ,c}(v) <d(\omega ,c), 
\cJ_c (v)<d(\omega ,c)\r\},\\
 \scA_{\omega ,c}^{-}&=\l\{ v\in H^1(\R)\setminus\{0\} : \cS_{\omega ,c}(v) <d(\omega ,c), 
\cK_{\omega ,c}(v)<0\r\},\\
 \scB_{\omega ,c}^{-}&=\l\{ v\in H^1(\R)\setminus\{0\} : \cS_{\omega ,c}(v) <d(\omega ,c), 
\cJ_c (v) >d(\omega ,c)\r\},
\end{align*}
where the functional $\cJ_c$ is defined by
\begin{align*}
\cJ_c(v)=-\frac{c}{8}\| v\|_{L^4}^4+\frac{\gamma}{16}\| v\|_{L^6}^6.
\end{align*}
We note that the functional $\cS_{\omega ,c}$ is rewritten as
\begin{align}
\label{eq:4.2}
\cS_{\omega,c}(v) =\frac{1}{2}\cK_{\omega ,c}(v)+\cJ_c(v).
\end{align}
From Proposition \ref{prop:4.1} we obtain the following result. 
\begin{proposition}
\label{prop:4.2}
Assume \eqref{eq:4.1}. Then
$\scA_{\omega ,c}^{+}$ and $\scA_{\omega ,c}^{-}$ are invariant under the flow of \eqref{ME}. Moreover, we have $\scA_{\omega ,c}^{\pm}=\scB_{\omega ,c}^{\pm}$. 
\end{proposition}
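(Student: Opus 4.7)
The plan is to first establish the set-theoretic identity $\scA_{\omega,c}^\pm=\scB_{\omega,c}^\pm$ by a fiber analysis along rays $\lambda\mapsto\lambda v$, and then deduce invariance under the \eqref{ME}-flow from the conservation of $\cE,\cM,\cP$ combined with the variational characterization in Proposition~\ref{prop:4.1}.

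For the identity, I would expand
\[
\cK_{\omega,c}(\lambda v)=A\lambda^2+B\lambda^4-C\lambda^6,\qquad \cJ_c(\lambda v)=-\tfrac{B}{4}\lambda^4+\tfrac{C}{3}\lambda^6,
\]
with $A=\|\partial_x(e^{-icx/2}v)\|_{L^2}^2+(\omega-c^2/4)\|v\|_{L^2}^2$, $B=\tfrac{c}{2}\|v\|_{L^4}^4$, $C=\tfrac{3\gamma}{16}\|v\|_{L^6}^6$, so that $A,C>0$ for $v\not\equiv 0$. If $v\in\scA^+_{\omega,c}$, then \eqref{eq:4.2} immediately yields $\cJ_c(v)<\cS_{\omega,c}(v)<d(\omega,c)$, hence $v\in\scB^+_{\omega,c}$. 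If $v\in\scA^-_{\omega,c}$, then $\cK_{\omega,c}(\lambda v)$ is positive for small $\lambda>0$ and negative at $\lambda=1$, producing a unique $\lambda_0\in(0,1)$ with $\cK_{\omega,c}(\lambda_0 v)=0$; Proposition~\ref{prop:4.1} then gives $\cJ_c(\lambda_0 v)=\cS_{\omega,c}(\lambda_0 v)\geq d(\omega,c)$. Since $\tfrac{d}{d\lambda}\cJ_c(\lambda v)=\lambda^3(-B+2C\lambda^2)$ and the relation $C\lambda_0^4=A+B\lambda_0^2$ forces $\lambda_0^2>B/(2C)$ (trivially if $B\leq 0$; from $C\lambda_0^2-B=A/\lambda_0^2>0$ if $B>0$), the map $\lambda\mapsto\cJ_c(\lambda v)$ is strictly increasing on $[\lambda_0,\infty)$. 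Therefore $\cJ_c(v)>\cJ_c(\lambda_0 v)\geq d(\omega,c)$, giving $v\in\scB^-_{\omega,c}$. The reverse inclusions $\scB^\pm_{\omega,c}\subset\scA^\pm_{\omega,c}$ follow by contradiction: $v\in\scB^+_{\omega,c}$ with $\cK_{\omega,c}(v)\leq 0$ would, by the cases just treated, either force $\cS_{\omega,c}(v)\geq d(\omega,c)$ (if $\cK_{\omega,c}(v)=0$ with $v\neq 0$, by Proposition~\ref{prop:4.1}) or $\cJ_c(v)>d(\omega,c)$ (if $\cK_{\omega,c}(v)<0$), contradicting the definition of $\scB^+_{\omega,c}$; the argument for $\scB^-_{\omega,c}$ is symmetric.

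For invariance, take $v_0\in\scA^+_{\omega,c}$ and let $v(t)$ be the corresponding $H^1$-solution of \eqref{ME}. Conservation of $\cE,\cM,\cP$ preserves $\cS_{\omega,c}(v(t))=\cS_{\omega,c}(v_0)<d(\omega,c)$, while conservation of $\cM$ ensures $v(t)\not\equiv 0$. If $\cK_{\omega,c}(v(t))$ were ever non-positive, the $H^1$-continuity of $t\mapsto v(t)$ and of $\cK_{\omega,c}$ on $H^1$ would produce, via the intermediate value theorem, a first time $t_1$ with $\cK_{\omega,c}(v(t_1))=0$ and $v(t_1)\neq 0$; Proposition~\ref{prop:4.1} would then give $\cS_{\omega,c}(v(t_1))\geq d(\omega,c)$, a contradiction. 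The same reasoning applies to $\scA^-_{\omega,c}$. The main obstacle is the monotonicity step for $\cJ_c(\lambda v)$ on $[\lambda_0,1]$: because $\cJ_c(\cdot\,v)$ is \emph{not} globally monotone in $\lambda$ when $c>0$, one must pin the Nehari point $\lambda_0$ to the increasing branch, which is precisely what the structural identity $C\lambda_0^4=A+B\lambda_0^2$ delivers.
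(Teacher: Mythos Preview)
Your proof is correct and follows essentially the same approach the paper refers to (Lemma~11 of \cite{CO06}): the ray analysis $\lambda\mapsto\lambda v$ combined with the Nehari characterization of Proposition~\ref{prop:4.1} to identify $\scA_{\omega,c}^{\pm}$ with $\scB_{\omega,c}^{\pm}$, and then conservation of $\cS_{\omega,c}$ plus $H^1$-continuity for the invariance. Your handling of the monotonicity of $\cJ_c(\lambda v)$ past the Nehari point via $C\lambda_0^4=A+B\lambda_0^2$ is exactly the point one must be careful about when $c>0$, and you treat it correctly (in fact you get $\lambda_0^2>B/C$, stronger than the $B/(2C)$ threshold needed).
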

\begin{proof}
The proof is done in the similar way as \cite[Lemma 11]{CO06}.
\end{proof}
\begin{remark*}
One can also prove that if the initial data of \eqref{ME} belongs to $\scA_{\omega ,c}^+$, then the corresponding $H^1(\R)$-solution is global and bounded.
\end{remark*}
Finally we prepare the compactness result on minimizers on the Nehari manifold which is important for the proof of stability. 
\begin{proposition}[\cite{H19}]
\label{prop:4.3}
Assume \eqref{eq:4.1}. If a sequence $
\thickmuskip=3mu \{\varphi_n\}\subset X_{\omega,c}$ satisfies
\begin{align*}
\cS_{\omega ,c}(\varphi_n ) \to\mu (\omega ,c)~\text{and}~\cK_{\omega ,c}(\varphi_n ) \to 0~\text{as}~n\to\infty ,
\end{align*}
then there exist a sequence $\{ y_n\}\subset\R$ and $v\in\scM_{\omega ,c}$ such that $\{ \varphi_n(\cdot -y_n)\}$ has a subsequence that converges to $v$ strongly in $X_{\omega ,c}$.
\end{proposition}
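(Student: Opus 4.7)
The plan is to use the classical concentration-compactness / Brezis-Lieb scheme for a Nehari-type minimization problem, adapted to the degenerate ambient space $X_{\omega,c}$ via a gauge change.

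\textbf{Step 1 (Gauge reduction and boundedness).} Setting $\psi_n := e^{-icx/2}\varphi_n$ and completing the square on the quadratic part, one checks that
\[
\cK_{\omega,c}(\varphi_n) = \|\del_x\psi_n\|_{L^2}^2 + \l(\omega - \tfrac{c^2}{4}\r)\|\psi_n\|_{L^2}^2 + \tfrac{c}{2}\|\psi_n\|_{L^4}^4 - \tfrac{3\gamma}{16}\|\psi_n\|_{L^6}^6,
\]
and similarly $\cS_{\omega,c}(\varphi_n)$ becomes a polynomial expression in $\psi_n$ free of the non-local momentum term. The identity $\cS_{\omega,c} = \tfrac{1}{2}\cK_{\omega,c} + \cJ_c$ and the hypotheses yield $\cJ_c(\psi_n) \to \mu(\omega,c) > 0$. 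Combining this with the Gagliardo-Nirenberg inequality $\|\psi\|_{L^6}^6 \cleq \|\del_x\psi\|_{L^2}^2 \|\psi\|_{L^4}^4$ and the cancellations built into $\cK_{\omega,c}(\psi_n) \to 0$, I expect to obtain a uniform bound for $\psi_n$ in $X_{\omega,c}$ (in $H^1$ when $\omega > c^2/4$, in $\dot H^1 \cap L^4$ when $c = 2\sqrt\omega$).

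\textbf{Step 2 (Non-vanishing via Lions).} If vanishing held, measured in $L^2$ when $\omega > c^2/4$ and in $L^4$ in the algebraic case, then Lions' lemma would force $\|\psi_n\|_{L^p} \to 0$ for the relevant range of $p > 2$, hence $\cJ_c(\psi_n) \to 0$, contradicting $\mu(\omega,c) > 0$. Passing to a subsequence and translating by appropriate $y_n \in \R$, one extracts a non-trivial weak limit $\varphi_n(\cdot + y_n) \wto v$ in $X_{\omega,c}$ with $v \neq 0$.

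\textbf{Step 3 (Identification and strong convergence).} With $w_n := \varphi_n(\cdot + y_n) - v$, the Brezis-Lieb lemma (Lemma \ref{lem:3.2}) applied to each $L^p$-term, combined with the exact splitting of the quadratic forms under $w_n \wto 0$ (transparent in the gauge-simplified functionals from Step 1), yields
\[
\cS_{\omega,c}(\varphi_n(\cdot+y_n)) = \cS_{\omega,c}(v) + \cS_{\omega,c}(w_n) + o(1), \quad \cK_{\omega,c}(\varphi_n(\cdot+y_n)) = \cK_{\omega,c}(v) + \cK_{\omega,c}(w_n) + o(1).
\]
A standard Nehari scaling together with Proposition \ref{prop:4.1} ($\cS_{\omega,c}\ge \mu$ on the constraint, and a cheaper rescaled competitor would contradict the infimum) then forces $\cK_{\omega,c}(v) = 0$ and $\cS_{\omega,c}(v) = \mu(\omega,c)$, so that $\cS_{\omega,c}(w_n), \cK_{\omega,c}(w_n) \to 0$. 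A final coercivity estimate inside the gauge-simplified functional upgrades this to $w_n \to 0$ strongly in $X_{\omega,c}$, and $v \in \scM_{\omega,c}$ by Proposition \ref{prop:4.1}.

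\textbf{Main obstacle.} The delicate point is the algebraic case $c = 2\sqrt\omega$: the ambient space $X_{\omega,c} = \dot H^1 \cap L^4$ has no direct $L^2$ control, so the $H^1$ versions of Lions' lemma, local Rellich-type compactness, and Brezis-Lieb all need to be replaced by their $\dot H^1 \cap L^4$ counterparts, based on $L^4$-localization and the embedding $\dot H^1 \cap L^4 \hookrightarrow L^p$ for $p \in (4,\infty)$. In addition, the non-local momentum term $(i\del_x\varphi,\varphi)$ is not visibly continuous on this ambient space. The gauge change $\psi = e^{-icx/2}\varphi$ is the decisive device that neutralizes both problems simultaneously, by transforming the functionals into purely polynomial expressions on $\dot H^1 \cap L^4$.
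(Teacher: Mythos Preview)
The paper does not give its own proof of this proposition; it is quoted from \cite{H19}. Your outline follows exactly the standard concentration--compactness\,/\,Nehari scheme that one expects in \cite{H19}, and the strategy is sound, including the correct identification of the algebraic endpoint $c=2\sqrt{\omega}$ as the delicate case and of the gauge substitution $\psi=e^{-icx/2}\varphi$ as the device that removes the non-local momentum term and reduces everything to polynomial functionals on $\dot H^1\cap L^4$.

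Two points deserve tightening. First, in Step~1 the phrase ``I expect to obtain a uniform bound'' is too vague, and the Gagliardo--Nirenberg hint you give is not the cleanest route. The linear combination
\[
\cS_{\omega,c}(\varphi)-\tfrac14\cK_{\omega,c}(\varphi)
=\tfrac14\Bigl(\|\del_x\psi\|_{L^2}^2+(\omega-\tfrac{c^2}{4})\|\psi\|_{L^2}^2\Bigr)+\tfrac{\gamma}{64}\|\psi\|_{L^6}^6
\]
is manifestly non-negative (since $\gamma>0$) and converges to $\mu(\omega,c)$; this immediately bounds both the quadratic part and the $L^6$-norm, and then $\cJ_c(\varphi_n)\to\mu(\omega,c)$ controls $L^4$ when $c\neq0$ (with an interpolation for $c=0$). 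Second, in Step~3 the ``standard Nehari scaling'' should be spelled out: the same combination above shows that $\mu(\omega,c)=\inf\{\cS_{\omega,c}-\tfrac14\cK_{\omega,c}:\varphi\neq0,\ \cK_{\omega,c}(\varphi)\le0\}$, because if $\cK_{\omega,c}(\varphi)<0$ one can scale $\varphi\mapsto\lambda\varphi$ with $\lambda\in(0,1)$ back onto the manifold while strictly decreasing this quantity. This is what rules out $\cK_{\omega,c}(v)\neq0$ after the Brezis--Lieb splitting. With these two clarifications the argument is complete; the $\dot H^1\cap L^4$ replacements for Lions' lemma and local compactness that you flag in the endpoint case are routine once the gauge change has been made.
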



\subsection{Stability theory with potential wells}
\label{sec:4.2}

Here we assume that $b$ and $(\omega ,c)$ satisfy
\begin{align}
\label{eq:4.3}
-\frac{3}{16}< b<0\l(\Leftrightarrow0<\gamma<1\r),~ -2\sqrt{\omega} <c\leq 2\sqrt{\omega}.
\end{align}
We note that $\cP(\varphi_{\omega ,c})>0$ by Proposition \ref{prop:2.4}. To prove stability of the soliton, we need to control the flow around the soliton. By taking advantage of potential wells, we obtain the following claim which plays a key role for the proof of stability.
\begin{proposition}
\label{prop:4.4}
Assume \eqref{eq:4.3}. Then, there exists $\eps_0>0$ such that for any $\eps\in(0,\eps_0)$ there exists $\delta>0$\footnote{As can be seen in the proof, $\delta$ depends on $\eps$ and $(\omega,c)$.} such that if $v_0\in H^1(\R)$ satisfies $\|v_0-\varphi_{\omega ,c}\|_{H^1}<\delta$, then the solution $v(t)$ of \eqref{ME} with $v(0)=v_0$ exists globally in time and satisfies that 
\begin{enumerate}[\rm(i)]
\item if $c =2s\mu$ for $s\in (0,1]$ $(\mu =\sqrt{\omega})$,
\begin{align}
\label{eq:4.4}
\begin{aligned}
d\bigl( (\mu -\eps )^2 ,2s(\mu -\eps )\bigr)& -\frac{s\eps}{4}\| v(t)\|_{L^4}^4 \\
<\cJ_{c} (v(t)) &<d\bigl( (\mu +\eps )^2 ,2s(\mu +\eps )\bigr) +\frac{s\eps}{4}\| v(t)\|_{L^4}^4,
\end{aligned}
\end{align}

\item if $c=0$,
\begin{align}
\label{eq:4.5}
d(\omega ,-\eps )-\frac{\eps}{8}\| v(t)\|_{L^4}^4 <\cJ_0 (v(t)) <d(\omega , \eps )+\frac{\eps}{8}\| v(t)\|_{L^4}^4 ,
\end{align}

\item if $c<0$,
\begin{align}
\label{eq:4.6}
d(\omega -\eps ,c ) <\cJ_{c}(v(t)) <d(\omega +\eps ,c),
\end{align}
\end{enumerate}
for all $t\in \R$ in {\rm (i)-(iii)}.
\end{proposition}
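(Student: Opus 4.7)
The plan is to deploy the invariant potential wells of Proposition \ref{prop:4.2} at carefully chosen nearby parameters $(\omega_+,c_+)$ and $(\omega_-,c_-)$, then pass the resulting bounds on $\cJ_{c_\pm}$ to the claimed bounds on $\cJ_c$ through the identity
\begin{align*}
\cJ_c(v)-\cJ_{c'}(v)=\frac{c'-c}{8}\|v\|_{L^4}^4,
\end{align*}
which is immediate from the definition of $\cJ_c$. The parameter choices are dictated by the shape of the right-hand sides in \eqref{eq:4.4}--\eqref{eq:4.6}: in case (i) I move along the scaling curve \eqref{eq:1.10}, setting $(\omega_\pm,c_\pm)=((\mu\pm\eps)^2,2s(\mu\pm\eps))$; in case (ii) I perturb only in the velocity, $(\omega_\pm,c_\pm)=(\omega,\pm\eps)$; in case (iii) only in the frequency, $(\omega_\pm,c_\pm)=(\omega\pm\eps,c)$.

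The main step is to check that $\varphi_{\omega,c}\in\scA_{\omega_+,c_+}^{+}\cap\scA_{\omega_-,c_-}^{-}$, i.e.\ $\cS_{\omega_\pm,c_\pm}(\varphi_{\omega,c})<d(\omega_\pm,c_\pm)$ together with $\cK_{\omega_+,c_+}(\varphi_{\omega,c})>0$ and $\cK_{\omega_-,c_-}(\varphi_{\omega,c})<0$. Using linearity of $\cS$ and $\cK$ in $(\omega,c)$ together with $\partial_\omega d=\tfrac{1}{2}\cM(\varphi_{\omega,c})$ and $\partial_c d=\tfrac{1}{2}\cP(\varphi_{\omega,c})$, the $\cS$-gap reduces to a second-order term. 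In case (i) the scaling \eqref{eq:2.5} makes $d$ linear along the curve while $\cS_{\omega_\pm,c_\pm}(\varphi_{\omega,c})$ retains a genuine $\eps^2$-contribution, yielding
\begin{align*}
d(\omega_\pm,c_\pm)-\cS_{\omega_\pm,c_\pm}(\varphi_{\omega,c})=\frac{s\eps^2}{2\mu}\cP(\varphi_{\omega,c})>0
\end{align*}
(using $s>0$ and $\cP>0$ from Proposition \ref{prop:2.4}). In case (iii) the analogous expansion gives $\tfrac{\eps^2}{4}\partial_\omega\cM(\varphi_{\omega,c})$, positive for $c<0$ by Lemma \ref{lem:2.2}. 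In case (ii) the first-order term vanishes identically and one is left with $\tfrac{\eps^2}{2}\partial_c^2 d(\omega,0)=\tfrac{\eps^2}{4}\partial_c\cP(\varphi_{\omega,c})|_{c=0}$; computing $\partial_c\cP$ from the explicit formula in Lemma \ref{lem:2.3} produces $\tfrac{1-\gamma}{4\gamma}\cM(\varphi_{\omega,0})>0$, in which the hypothesis $0<\gamma<1$ is decisive. The $\cK$-sign conditions are then immediate from $\cK_{\omega',c'}(\varphi_{\omega,c})=(\omega'-\omega)\cM+(c'-c)\cP$ and the positivity of $\cM$ and $\cP$.

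Once strict membership at $\varphi_{\omega,c}$ is in hand, continuity of $\cS_{\omega_\pm,c_\pm}$ and $\cK_{\omega_\pm,c_\pm}$ on $H^1(\R)$ furnishes a $\delta>0$ (depending on $\eps$ and $(\omega,c)$) such that every $v_0$ within $H^1$-distance $\delta$ of $\varphi_{\omega,c}$ lies in the same intersection. Proposition \ref{prop:4.2} then propagates this in time---the remark following it simultaneously supplies global existence and an $H^1$-bound because $v(t)\in\scA_{\omega_+,c_+}^{+}$---and the identification $\scA^\pm=\scB^\pm$ upgrades invariance to
\begin{align*}
\cJ_{c_+}(v(t))<d(\omega_+,c_+),\qquad \cJ_{c_-}(v(t))>d(\omega_-,c_-).
\end{align*}
Substituting these into the conversion identity above reproduces \eqref{eq:4.4}--\eqref{eq:4.6} exactly, with the $L^4$-correction absent in case (iii) because $c_\pm=c$ there.

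The most delicate point is the case (ii) gap, where the first-order term vanishes and one must access a second-order expansion of $d$ in $c$; this is where the hypothesis $b<0$ enters non-trivially. In case (i) one must also note that the scaling curve is parabolic rather than straight in the $(\omega,c)$-plane, so I would carry out the expansion directly along $t\mapsto((\mu+t)^2,2s(\mu+t))$; the algebra is clean thanks to \eqref{eq:2.5}. The endpoint $s=1$, corresponding to the algebraic soliton, requires no separate treatment because the perturbed parameters remain on the curve $c_\pm=2\sqrt{\omega_\pm}$ and Propositions \ref{prop:4.1}--\ref{prop:4.2} apply uniformly there.
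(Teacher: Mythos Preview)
Your approach is essentially the paper's: perturb $(\omega,c)$ to nearby parameters $(\omega_\pm,c_\pm)$, verify strict membership of $\varphi_{\omega,c}$ in the corresponding potential wells, extend to $v_0$ by $H^1$-continuity, propagate by Proposition~\ref{prop:4.2}, and convert the $\cJ_{c_\pm}$-bounds into $\cJ_c$-bounds via $\cJ_c-\cJ_{c'}=\tfrac{c'-c}{8}\|\cdot\|_{L^4}^4$. The three cases, the parameter choices, and the second-order mechanisms (the scaling identity \eqref{eq:2.5} for (i), $\partial_c\cP|_{c=0}>0$ via Lemma~\ref{lem:2.3} for (ii), $\partial_\omega\cM>0$ via Lemma~\ref{lem:2.2} for (iii)) all match the paper.

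One small slip: your formula $\cK_{\omega',c'}(\varphi_{\omega,c})=(\omega'-\omega)\cM+(c'-c)\cP$ is not quite right, because $\cK_{\omega,c}$ carries $\tfrac{c}{2}\|\varphi\|_{L^4}^4$ rather than $\tfrac{c}{4}\|\varphi\|_{L^4}^4$; the correct increment is $(\omega'-\omega)\cM(\varphi_{\omega,c})+(c'-c)\bigl(\cP(\varphi_{\omega,c})+\tfrac{1}{4}\|\varphi_{\omega,c}\|_{L^4}^4\bigr)$. Since the extra term is positive, your sign conclusions for $\cK_{\omega_\pm,c_\pm}(\varphi_{\omega,c})$ are unaffected. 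The paper sidesteps this by checking membership in $\scB^\pm$ directly (using that $c\mapsto\cJ_c(\varphi_{\omega,c})$ is monotone and comparing with $d$), rather than in $\scA^\pm$ via $\cK$.
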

\begin{remark*}
Compared with the corresponding result \cite[Lemma 12]{CO06}, the $L^4$-norm appears in \eqref{eq:4.4} and \eqref{eq:4.5}, which comes from the lack of the ``good" Hamiltonian structure in \eqref{ME}.
\end{remark*}
\begin{proof}
We mainly prove the most difficult case $0<c\leq 2\sqrt{\omega}$. 

(i) Let $\eps_0>0$ be sufficiently small. For $\eps\in (0,\eps_0)$ we define the function $g$ by
\begin{align*}
g(\tau )=d\bigl( (\mu +\tau)^2 ,2s(\mu +\tau)\bigr) \quad\text{for}~\tau\in (-\eps ,\eps ).
\end{align*}
From the relation \eqref{eq:2.5} we have
\begin{align*}
&g(\tau )=(\mu +\tau )^2d(1,2s)\quad\text{for}~\tau\in (-\eps ,\eps ),
\end{align*}
which yields that
\begin{align}
\label{eq:4.7}
g(0)=\mu^2d(1,2s),~g'(0)=2\mu d(1,2s), ~g''(0)=2d(1,2s).
\end{align}
From Lemma \ref{lem:2.5} we have
\begin{align}
\label{eq:4.8}
2d(1,2s)=\cM (\varphi_{1,2s})+s\cP (\varphi_{1,2s}).
\end{align}
Assume that $v_0\in H^1(\R)$ satisfies $\| v_0-\varphi_{\mu^2,2s\mu}\|_{H^1}<\delta$, where $\delta>0$ is determined later. First we prove that 
\begin{align}
\label{eq:4.9}
v_0 \in \scB^+_{({\mu +\eps})^2,2s({\mu +\eps})}
\cap \scB^-_{({\mu -\eps})^2,2s({\mu -\eps})}.
\end{align}
From \eqref{eq:4.8} and \eqref{eq:4.7} we have
\begin{align*}
\cS_{(\mu \pm\eps )^2,2s(\mu\pm\eps )}(v_0)
&=\cS_{(\mu \pm\eps )^2,2s(\mu\pm\eps )}(\varphi_{\mu^2,2s\mu})+O(\delta )\\
&=\cE (\varphi_{\mu^2,2s\mu})+\frac{(\mu\pm\eps )^2}{2}\cM (\varphi_{\mu^2,2s\mu})\\
&\qquad\qquad\qquad +s(\mu\pm\eps )\cP (\varphi_{\mu^2,2s\mu})+O(\delta )\\
&=\mu^2 d(1,2s)\pm\eps\mu\l( \cM (\varphi_{1,2s})+s\cP (\varphi_{1,2s})\r)\\
&\qquad\qquad\qquad +\frac{\eps^2}{2}\cM (\varphi_{1,2s})+O(\delta )\\
&=g(0)\pm\eps g'(0)+\frac{\eps^2}{2}\cM (\varphi_{1,2s})+O(\delta ).
\end{align*}
By using the Taylor expansion,\footnote{One can also show this formula without using the Taylor expansion since the function $g$ is the quadratic function.} we have
\begin{align*}
g(\pm\eps )&=g(0)\pm\eps g'(0)+\frac{\eps^2}{2}g''(0 ).
\end{align*}
We note that
\begin{align*}
g''(0)=2d(1,2s)=\cM (\varphi_{1,2s})+s\cP (\varphi_{1,2s})
\end{align*}
and $s\cP(\varphi_{1,2s})>0$. Therefore, by taking small $\delta >0$ we obtain that
\begin{align}
\label{eq:4.10}
\cS_{(\mu \pm\eps )^2,2s(\mu\pm\eps )}(v_0) < g(\pm\eps ).
\end{align}
On the other hand, by (\ref{eq:4.2}) and $\cK_{\omega ,c}(\varphi_{\omega,c})=0$ we have
\begin{align*}
\cJ_{c+2s\eps}(\varphi_{\omega ,c})&=
-\frac{c+2s\eps}{8}\|\varphi_{\omega ,c}\|_{L^4}^4 +\frac{\gamma}{16}\| \varphi_{\omega ,c}\|_{L^6}^6 \\
&<\cJ_{c} (\varphi_{\omega ,c})=g(0) <g(\eps ).
\end{align*}
By taking smaller $\delta>0$ again, we obtain that $\cJ_{c+2s\eps} (v_0)<g(\eps )$. Similarly, we have 
$g(-\eps )<\cJ_{c-2s\eps} (v_0)$.
Combined with \eqref{eq:4.10}, we deduce that \eqref{eq:4.9} holds. 

We now prove \eqref{eq:4.4}. By Proposition \ref{prop:4.2} we have
\begin{align}
\label{eq:4.11}
v(t) \in \scB^+_{({\mu +\eps})^2,2s({\mu +\eps})}
\cap \scB^-_{({\mu -\eps})^2,2s({\mu -\eps})}
\end{align}
for all $t\in \R$. Therefore, we deduce that
\begin{align*}
g(\eps )>\cJ_{c+2s\eps} (v(t))
&=-\frac{c+2s\eps}{8}\| v(t)\|_{L^4}^4 +\frac{\gamma}{16}\| v(t)\|_{L^6}^6
\\
&=\cJ_{c}(v(t))-\frac{s\eps}{4} \| v(t)\|_{L^4}^4.
\end{align*}
Similarly, we have
\begin{align*}
g(-\eps )<\cJ_{c}(v(t))+\frac{s\eps}{4} \| v(t)\|_{L^4}^4.
\end{align*}
This completes the proof of \eqref{eq:4.4}.
\\[5pt]
(ii) When $c=0$, by Lemma \ref{lem:2.3} we have
\begin{align*}
\Bigl. \del_c \cP(\varphi_{\omega ,c})\Bigr|_{c=0}= \frac{1}{2} \l( -1+ \frac{1}{\gamma} \r) M(\varphi_{\omega ,0})>0,
\end{align*}
which yields that $\Bigl.\del_c^2 d(\omega ,c)\Bigr|_{c=0}>0$. From this fact and the calculation based on the function
$
(-\eps,\eps)\ni\tau\mapsto d(\omega,\tau),
$
one can prove that 
\begin{align}
\label{eq:4.12}
v_0 \in \scB^+_{\omega ,\eps}
\cap \scB^-_{\omega ,-\eps}.
\end{align}
In the same way as (i), we see that \eqref{eq:4.12} implies \eqref{eq:4.5}.
\\[5pt]
(iii) When $c<0$, by Lemma \ref{lem:2.2} we have 
\begin{align*}
\del^2_{\omega} d(\omega ,c)=\frac{1}{2}\del_{\omega}\cM (\varphi_{\omega ,c}) >0.
\end{align*}
From this fact and the calculation based on the function
$
(-\eps,\eps)\ni\tau\mapsto d(\omega+\tau,c),
$
one can prove that 
\begin{align*}
v_0 \in \scB^+_{\omega+\eps ,c}
\cap \scB^-_{\omega-\eps ,c},
\end{align*}
which yields \eqref{eq:4.6}.
\end{proof}
Combined with Proposition \ref{prop:4.3}, one can prove the following stability result.
\begin{theorem}
\label{thm:4.5}
Assume \eqref{eq:4.3}. Then the soliton $v_{\omega,c}$ of \eqref{ME} is stable.
\end{theorem}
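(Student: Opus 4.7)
The plan is to prove Theorem~\ref{thm:4.5} by a contradiction argument that combines the Nehari characterization (Proposition~\ref{prop:4.1}), invariance of the potential wells (Proposition~\ref{prop:4.2}), the flow-confinement estimates (Proposition~\ref{prop:4.4}), and the compactness of minimizing sequences (Proposition~\ref{prop:4.3}). Assume instability: there exist $\eps_0>0$, data $v_{0,n}\in H^1(\R)$ with $\|v_{0,n}-\varphi_{\omega,c}\|_{H^1}\to 0$, and times $t_n\in\R$ such that
\[
\inf_{(\theta,y)\in\R^2}\bigl\| v_n(t_n)-e^{i\theta}\varphi_{\omega,c}(\cdot-y)\bigr\|_{H^1}\geq\eps_0,
\]
where $v_n$ is the solution of \eqref{ME} with $v_n(0)=v_{0,n}$. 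Choose a sequence $\eps_n\downarrow 0$ satisfying $\|v_{0,n}-\varphi_{\omega,c}\|_{H^1}<\delta(\eps_n)$, where $\delta(\eps)$ is furnished by Proposition~\ref{prop:4.4}, so the two-sided bounds there apply to $v_n(t_n)$ with parameter $\eps_n$, in whichever of the three cases (i)--(iii) is relevant to $(\omega,c)$.

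The next step is to show that $\varphi_n\ce v_n(t_n)$ is a minimizing sequence for $\mu(\omega,c)=d(\omega,c)$. Conservation of $\cE,\cM,\cP$ together with continuity of $\cS_{\omega,c}$ yields $\cS_{\omega,c}(\varphi_n)\to d(\omega,c)$. To pass Proposition~\ref{prop:4.4} to the limit I need a uniform bound on $\|v_n(t_n)\|_{L^4}^4$; this follows by Sobolev embedding from a uniform $H^1$ bound, which in turn comes from the conservation laws combined with the mass-threshold result of \cite{H19}, since in the regime \eqref{eq:4.3} one has $\cM(\varphi_{\omega,c})<M^{*}(b)=4\pi/\gamma^{3/2}$. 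Consequently the error terms $O(\eps_n\|v_n(t_n)\|_{L^4}^4)$ in \eqref{eq:4.4}--\eqref{eq:4.5} vanish, and the continuity of $d$ in its parameters (through the scaling identity \eqref{eq:2.5} for case (i), and directly in $\omega$ or $c$ for cases (ii)--(iii)) forces $\cJ_c(\varphi_n)\to d(\omega,c)$. The decomposition $\cS_{\omega,c}=\tfrac12\cK_{\omega,c}+\cJ_c$ then gives $\cK_{\omega,c}(\varphi_n)\to 0$. Proposition~\ref{prop:4.3} now produces translates $\varphi_n(\cdot-y_n)$ that, along a subsequence, converge strongly in $X_{\omega,c}$ to some $e^{i\theta_0}\varphi_{\omega,c}(\cdot-x_0)\in\scM_{\omega,c}$. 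For exponentially decaying solitons $X_{\omega,c}=H^1(\R)$, giving the conclusion directly; for the algebraic soliton $c=2\sqrt{\omega}$, convergence in $X_{\omega,c}=\dot H^1\cap L^4$ (weighted) combined with $\|\varphi_n\|_{L^2}^2=\cM(v_{0,n})\to\cM(\varphi_{\omega,c})$ upgrades via weak-$L^2$ convergence to strong $H^1$-convergence. Translating back contradicts the lower bound $\eps_0$ and proves the theorem.

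The main obstacle I anticipate is controlling the extra $\tfrac{s\eps}{4}\|v(t)\|_{L^4}^4$ and $\tfrac{\eps}{8}\|v(t)\|_{L^4}^4$ terms appearing in Proposition~\ref{prop:4.4}(i)--(ii), which are absent from the classical Colin--Ohta framework and reflect the fact that \eqref{ME} lacks the nice Hamiltonian form of \eqref{eq:1.1}. Making these errors harmless requires a uniform $H^1$ bound on $v_n(t_n)$ valid along the \emph{whole} flow, and this in turn relies crucially on $M^{*}(b)$ strictly exceeding $\cM(\varphi_{\omega,c})$ in the regime $-\tfrac{3}{16}<b<0$, as guaranteed by \cite{H19} and the explicit mass formulae of Lemma~\ref{lem:2.1}. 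A secondary technical point is the $X_{\omega,c}\to H^1$ upgrade in the algebraic case, but this is straightforward given mass conservation and the integrability of $\varphi_{\omega,2\sqrt{\omega}}$.
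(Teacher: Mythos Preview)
Your overall architecture---contradiction, conservation to get $\cS_{\omega,c}(v_n(t_n))\to d(\omega,c)$, Proposition~\ref{prop:4.4} to force $\cJ_c(v_n(t_n))\to d(\omega,c)$ and hence $\cK_{\omega,c}(v_n(t_n))\to 0$ via \eqref{eq:4.2}, then Proposition~\ref{prop:4.3} for compactness, with the $X_{\omega,c}\to H^1$ upgrade in the algebraic case via weak $L^2$ convergence plus mass conservation---matches the paper's proof exactly.

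The one substantive divergence is how you obtain the uniform $H^1$ bound on $v_n(t_n)$ needed to kill the $\eps\|v(t)\|_{L^4}^4$ error terms. You invoke the mass-threshold global boundedness result of \cite{H19}, checking that $\cM(\varphi_{\omega,c})\le 4\pi/\sqrt{\gamma}<4\pi/\gamma^{3/2}=M^*(b)$ when $0<\gamma<1$; this is correct, though you should note that uniformity in $n$ requires the bound in \cite{H19} to depend only on the conserved quantities (it does). The paper instead uses a short, self-contained trick: since the orbital distance is $<\eps_1$ at $t=0$ (for large $n$) and $\ge\eps_1$ at the original $t_n$, by continuity of $t\mapsto v_n(t)$ one can \emph{redefine} $t_n$ so that the orbital distance is exactly $\eps_1$, which immediately gives $\|v_n(t_n)\|_{H^1}\le\|\varphi_{\omega,c}\|_{H^1}+\eps_1$. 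This intermediate-value argument avoids any appeal to external global-boundedness results and makes the proof independent of the mass threshold; your route works but is heavier.
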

\begin{proof}
The claim is proved by contradiction. Assume that there exist $\eps_1 >0$, a sequence of the maximal solutions $\{v_n\}$ to \eqref{ME} and a sequence $\{ t_n\}\subset\R$ such that 
\begin{align}
\label{eq:4.13}
\| v_n(0)-\varphi_{\omega,c} \|_{H^1} \underset{n\to\infty}{\longrightarrow} 0,\\
\label{eq:4.14}
\inf_{(\theta ,y)\in\R^2}\| v_n(t_n)-e^{i\theta}\varphi_{\omega ,c}(\cdot -y)\|_{H^1}\geq\eps_1 .
\end{align}
Since $\cS_{\omega ,c}(\cdot)$ is a conserved quantity, by \eqref{eq:4.13} we have
\begin{align}
\label{eq:4.15}
\cS_{\omega ,c}(v_n (t_n))=\cS_{\omega ,c}(v_n(0))
\underset{n\to\infty}{\longrightarrow}\cS_{\omega ,c}(\varphi_{\omega ,c}) =d(\omega ,c).
\end{align}
By \eqref{eq:4.13}, \eqref{eq:4.14} and the continuity $t\mapsto v (t)\in H^1(\R)$, one can pick up $t_n$ (still denoted by the same letter) such that
\begin{align}
\label{eq:4.16}
\inf_{(\theta ,y)\in\R^2}\| v_n(t_n)-e^{i\theta}\varphi_{\omega ,c}(\cdot -y)\|_{H^1}=\eps_1 .
\end{align}
This equality yields the boundedness of $\{ v_n(t_n)\}$ in $H^1(\R)$, i.e.,
\begin{align}
\label{eq:4.17}
\sup_{n\in\N} \| v_n(t_n)\|_{H^1} \leq C,
\end{align}
where $C$ only depends on $\| \varphi_{\omega ,c}\|_{H^1}$ and $\eps_1$. 
From Proposition \ref{prop:4.4} and \eqref{eq:4.17}, we obtain that 
\begin{align*}
\cJ_{c}(v_n(t_n))\underset{n\to\infty}{\longrightarrow} d(\omega ,c).
\end{align*}
Combined with \eqref{eq:4.2}, we have
\begin{align}
\label{eq:4.18}
\cK_{\omega ,c}(v_n(t_n))
\underset{n\to\infty}{\longrightarrow} 0.
\end{align}
Therefore, by (\ref{eq:4.15}), (\ref{eq:4.18}) and Proposition \ref{prop:4.3}, there exist a sequence $\{ y_n\}$ and $\theta_0 ,y_0 \in\R$ such that $\{ v_n(t_n, \cdot +y_n)\}$ has a subsequence (still denoted by the same letter) that converges to $e^{i\theta_0}\varphi_{\omega,c}(\cdot -y_0)$ in $X_{\omega ,c}$. If $\omega >c^2/4$, this yields that
\begin{align}
\label{eq:4.19}
\| v_n(t_n)-e^{i\theta_0}\varphi_{\omega ,c}(\cdot -y_0-y_n)\|_{H^1}\underset{n\to\infty}{\longrightarrow} 0,
\end{align} 
which contradicts \eqref{eq:4.16}.

When $c =2\sqrt{\omega}$, we need to modify the argument slightly. From the definition of $X_{c^2/4,c}$, we have
\begin{align}
\label{eq:4.20}
e^{-\frac{i}{2}c\cdot}v_n(t_n, \cdot +y_n) \to
e^{-\frac{i}{2}c\cdot}e^{i\theta_0}\varphi_{\omega,c}(\cdot -y_0)~\text{in}~\dot{H}^1(\R) .
\end{align}
By using this convergence one can easily prove that 
\begin{align}
\label{eq:4.21}
e^{-\frac{i}{2}c\cdot }v_n(t_n, \cdot +y_n) 
\wto
e^{-\frac{i}{2}c\cdot }e^{i\theta_0}\varphi_{\omega,c}(\cdot -y_0)~\text{weakly in}~L^2(\R) .
\end{align}
From \eqref{eq:4.13} and mass conservation we obtain that
\begin{align}
\label{eq:4.22}
\cM (v_n (t_n)) =\cM (v_n(0) ) \to \cM (\varphi_{\omega ,c}).
\end{align}
Therefore, it follows from \eqref{eq:4.21} and \eqref{eq:4.22} that
\begin{align}
\label{eq:4.23}
e^{-\frac{i}{2}c\cdot }v_n(t_n, \cdot +y_n) \to
e^{-\frac{i}{2}c\cdot}e^{i\theta_0}\varphi_{\omega,c}(\cdot -y_0)~\text{in}~L^2(\R) .
\end{align}
Hence \eqref{eq:4.19} follows from \eqref{eq:4.20} and \eqref{eq:4.23}, which contradicts \eqref{eq:4.16}. This completes the proof.
\end{proof}
\begin{proof}[Proof of Theorem \ref{thm:1.2}]
We note that $v_{\omega,c}=\cG(u_{\omega,c})$ and 
\begin{align*}
\cG(e^{i\theta}u(\cdot-y))(x)=e^{i\theta}\cG(u)(x -y)
\end{align*}
for $u\in H^1(\R)$ and $x,y,\theta\in \R$. We also note that the gauge transformation $u\mapsto \cG(u)$ is Lipschitz continuous on bounded subsets of $H^1(\R)$. Hence the result follows from Theorem \ref{thm:4.5} and the properties of the gauge transformation.
\end{proof}
\begin{remark*}
The stability of the solitons for the case $b=-\frac{3}{16}$ is proved in the same way. Indeed, the results in Section \ref{sec:4.1} still hold in this case, and Proposition \ref{prop:4.4} (iii) holds since velocity of the solitons is negative. 
\end{remark*}

We note that the formula \eqref{eq:1.15} still holds including the case $b<0$, i.e.,
\begin{align}
\label{eq:4.24}
\mathrm{det}[d''(\omega ,c)]=\frac{-2P(\phi_{\omega ,c})}{\sqrt{4\omega -c^2}
\l\{c^2+\gamma (4\omega -c^2) \r\} } 
\quad\text{for}~\omega>c^2/4.
\end{align}
By Proposition \ref{prop:2.4}, the momentum $P(\phi_{\omega,c})$ is always positive when $b<0$, which yields that $d''(\omega ,c)$ has one positive eigenvalue. Therefore there exists $\xi\in\R$ such that 
\begin{align*}
\tbra[d'(\omega ,c), \xi]\neq 0,~\tbra[d''(\omega ,c)\xi ,\xi]>0.
\end{align*}
As in the proof of Proposition \ref{prop:4.4}, the calculation of the function $\tau\mapsto d( (\omega,c)+\tau\xi)$ and variational characterization yields the control of the flow around the soliton. This is an adaptation of the argument in \cite{CO06} to our setting, but one cannot treat algebraic solitons in this approach.

Our variational approach offers a new perspective to the stability theory of a two-parameter family of solitons. We note that Proposition \ref{prop:4.4} is obtained without calculating the Hessian matrix $d''(\omega ,c)$. The calculation along the scaling curve gives a simpler argument on the stability theory, and also enables us to treat algebraic solitons and exponentially decaying solitons in a unified way.
This indicates that the curve \eqref{eq:1.10} gives not only the scaling of the soliton but also ``good" measure of the stability.


\section{Stability of solitons with negative velocity}
\label{sec:5}
In this section we study stability of the solitons for the case $b\le -\frac{3}{16}$, and prove Theorem \ref{thm:1.4}. For the proof we apply variational arguments introduced by Cazenave and Lions \cite{CL82}. Here we assume that $b$ and $(\omega,c)$ satisfy
\begin{align}
\label{eq:5.1}
b\le-\frac{3}{16}\l(\Leftrightarrow \gamma \le0\r),~ -2\sqrt{\omega} <c< -2s_*\sqrt{\omega}.
\end{align}
We remark that our proof in this section still works for the case $b>-\frac{3}{16}$ and 
$ -2\sqrt{\omega} <c<0$ (see the end of this section).

First we note that 
\begin{align}
\label{eq:5.2}
\begin{aligned}
\cS_{\omega ,c}(e^{\frac{i}{2}cx}\psi)
=& \frac{1}{2}\l\| \del_x\psi \r\|_{L^2}^2
 +\frac{1}{2}\l(\omega-\frac{c^2}{4}\r)\| \psi\|_{L^2}^2 +\frac{c}{8}\| \psi\|_{L^4}^4 -\frac{\gamma}{32}\| \psi\|_{L^6}^{6}
 \\
=& \cE_{c}(\psi) +\frac{1}{2}\l(\omega-\frac{c^2}{4}\r)\| \psi\|_{L^2}^2,
\end{aligned}
\end{align}
where $\cE_c$ is defined by
\begin{align*}
\cE_{c}(\psi)= \frac{1}{2}\l\| \del_x\psi \r\|_{L^2}^2+\frac{c}{8}\| \psi\|_{L^4}^4 -\frac{\gamma}{32}\| \psi\|_{L^6}^{6}.
\end{align*}
We note that $\cS_{\omega ,c}'(e^{\frac{i}{2}cx}\psi)=0$ is equivalent that
\begin{align*}
-\psi'' +\l( \omega -\frac{c^2}{4}\r)\psi +\frac{c}{2}|\psi|^2\psi -\frac{3}{16}\gamma|\psi|^4\psi=0,
\quad x\in\R,
\end{align*}
which is nothing but \eqref{eq:1.7}.

Now we consider a variational problem with mass constraint:
\begin{align*}
\scA_m =& \l\{ \psi\in H^1(\R) : \| \psi\|_{L^2}^2 =m \r\},\\
-\nu (c,m)&=\inf\l\{ \cE_c(\psi) : \psi\in \scA_m \r\},\\
\scM_{c,m}=&\l\{ \psi\in\scA_m : \cE_c(\psi ) =-\nu(c,m)\r\}
\end{align*}
for $m>0$. We begin with the following lemma.
\begin{lemma}
\label{lem:5.1}
Assume $\gamma\le 0$, $c<0$ and $m>0$. Then $-\infty<-\nu (c,m)<0$.
\end{lemma}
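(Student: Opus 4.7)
The plan is to prove the two inequalities separately, using the explicit form
\[
\cE_c(\psi)=\frac{1}{2}\|\del_x\psi\|_{L^2}^2+\frac{c}{8}\|\psi\|_{L^4}^4-\frac{\gamma}{32}\|\psi\|_{L^6}^6,
\]
and exploiting the signs $\gamma\le 0$ and $c<0$.

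\textbf{Lower bound.} Since $-\gamma\ge0$, the sextic term is non-negative on $\scA_m$, so only the $L^4$-term can drive the energy to $-\infty$. I would control it by the one-dimensional Gagliardo--Nirenberg inequality
\[
\|\psi\|_{L^4}^4\le C\|\psi\|_{L^2}^3\|\del_x\psi\|_{L^2}=Cm^{3/2}\|\del_x\psi\|_{L^2}
\]
for $\psi\in\scA_m$. Setting $X=\|\del_x\psi\|_{L^2}$, this gives
\[
\cE_c(\psi)\ge\frac{1}{2}X^2+\frac{c}{8}Cm^{3/2}X,
\]
and the right-hand side is a quadratic in $X$ bounded below by $-\frac{1}{2}\bigl(\tfrac{c}{8}Cm^{3/2}\bigr)^2$, independent of $\psi\in\scA_m$. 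Thus $-\nu(c,m)>-\infty$.

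\textbf{Negativity.} I would use the $L^2$-preserving scaling $\psi_\lambda(x)=\lambda^{1/2}\psi(\lambda x)$, which maps $\scA_m$ into $\scA_m$. A direct computation gives
\[
\cE_c(\psi_\lambda)=\frac{\lambda^2}{2}\|\del_x\psi\|_{L^2}^2+\frac{c\lambda}{8}\|\psi\|_{L^4}^4-\frac{\gamma\lambda^2}{32}\|\psi\|_{L^6}^6.
\]
Fixing any $\psi\in\scA_m$ with $\|\psi\|_{L^4}>0$, the linear-in-$\lambda$ term dominates the $\lambda^2$ terms as $\lambda\to 0^+$. Since $c<0$, this dominant term is strictly negative, so $\cE_c(\psi_\lambda)<0$ for all sufficiently small $\lambda>0$, which gives $-\nu(c,m)<0$.

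\textbf{Main obstacle.} The argument is essentially routine; the only thing to be slightly careful about is the use of the correct Gagliardo--Nirenberg exponents in 1D and verifying that the scaling is indeed $L^2$-invariant, both of which are standard. No deeper issue is expected, and neither the sign of $\gamma$ nor the specific range of $c$ beyond $c<0$ is actually needed here; the restriction \eqref{eq:5.1} will matter only in later steps where one relates minimizers to the solitons.
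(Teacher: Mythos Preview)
Your proof is correct and follows essentially the same route as the paper: the Gagliardo--Nirenberg inequality in one dimension for the lower bound, and the $L^2$-invariant scaling $\psi_\lambda(x)=\lambda^{1/2}\psi(\lambda x)$ for the strict negativity.

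One correction to your closing remark: the assumption $\gamma\le 0$ \emph{is} genuinely needed for the lower bound when $m$ is arbitrary. If $\gamma>0$, the sextic term $-\tfrac{\gamma}{32}\|\psi\|_{L^6}^6$ is negative, and the $L^2$-critical Gagliardo--Nirenberg inequality only gives $\|\psi\|_{L^6}^6\le C m^2\|\del_x\psi\|_{L^2}^2$; for $m$ large the coefficient of $\|\del_x\psi\|_{L^2}^2$ in the resulting lower bound becomes negative and $-\nu(c,m)=-\infty$. The paper itself notes this at the end of Section~\ref{sec:5}, where the argument is extended to $\gamma>0$ only under the restriction $m\in(0,2\pi/\sqrt{\gamma})$. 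So your proof is fine, but drop the parenthetical claim that the sign of $\gamma$ is inessential.
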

\begin{proof}
From the assumption, $\cE_c$ is rewritten as
\begin{align*}
\cE_c(\psi) =\frac{1}{2}\l\| \del_x\psi \r\|_{L^2}^2-\frac{|c|}{8}\| \psi\|_{L^4}^4 +\frac{|\gamma|}{32}\| \psi\|_{L^6}^{6}.
\end{align*}
For $\psi\in\scA_m$ we set $\psi_{\lambda}=\lambda^{1/2}\psi(\lambda x)$. 
Then, $\psi_{\lambda}\in\scA_m$ and
\begin{align*}
\cE_c (\psi_{\lambda}) &=\lambda^2\l( \frac{1}{2}\| \del_x\psi \|_{L^2}^2 +\frac{|\gamma|}{32}\| \psi\|_{L^6}^6 \r)-\frac{|c|}{8}\lambda\| \psi\|_{L^4}^4
\\
&=\lambda^2\l( \cE(\psi) -\lambda^{-1}\frac{|c|}{8}\| \psi\|_{L^4}^4\r).
\end{align*}
One can see that $\cE_c(\psi_{\lambda} )<0$ for sufficiently small $\lambda>0$, which yields that $-\nu (c,m)<0$. 

By using the Gagliardo--Nirenberg's inequality
\begin{align*}
\|f\|_{L^4}\leq C_1 \| \del_xf\|_{L^2}^{1/4}\| f\|_{L^2}^{3/4},
\end{align*}
we obtain that
\begin{align}
\label{eq:5.3}
 \begin{aligned}
\cE_c(\psi)
&\ge \frac{1}{2}\l\| \del_x\psi \r\|_{L^2}^2 -C_1\frac{|c|}{8}\| \del_x\psi\|_{L^2}\| f\|_{L^2}^{3}
\ge \frac{1}{4}\l\| \del_x\psi \r\|_{L^2}^2 -C_2c^2\| \psi\|_{L^2}^6
\end{aligned}
\end{align}
for some constant $C_2>0$. 
Therefore we deduce that
\begin{align*}
-\nu (c,m) =\inf_{\psi\in\scA_m}\cE_c(\psi) \ge -C_2c^2m^3>-\infty.
\end{align*}
This completes the proof.
\end{proof}
The following claim on the sequential compactness plays a key role for the proof of stability in this section.
\begin{proposition}
\label{prop:5.2}
Assume $\gamma\le 0$, $c<0$ and $m>0$. If a sequence $\{\psi_n\}\subset H^1(\R)\setminus\{0\}$ satisfies $\| \psi_n\|_{L^2}^2\to m$ and $\cE_c(\psi_n)\to -\nu(c,m)$, then there exist $\varphi\in\scM_{c,m}$  and a sequence $\{y_n\}\subset\R$, such that $\{\psi_{n}(\cdot-y_n)\}$ has a subsequence that converges to $\varphi$ strongly in $H^1(\R)$.
\end{proposition}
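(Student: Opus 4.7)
The proof will follow the classical concentration-compactness method of Cazenave--Lions~\cite{CL82}. Let $\{\psi_n\}$ be the given sequence, so $\|\psi_n\|_{L^2}^2\to m$ and $\cE_c(\psi_n)\to -\nu(c,m)<0$. First I would establish that $\{\psi_n\}$ is bounded in $H^1(\R)$: the Gagliardo--Nirenberg bound \eqref{eq:5.3} used in the proof of Lemma~\ref{lem:5.1} controls $\|\del_x\psi_n\|_{L^2}^2$ in terms of $\cE_c(\psi_n)$, $|c|$, and $\|\psi_n\|_{L^2}^2$.

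The key step is to establish strict subadditivity of the map $m'\mapsto \nu(c,m')$. Using the pure spatial dilation $\psi_\theta(x)\ce \psi(x/\theta)$ with $\theta>1$, a direct calculation gives
\begin{align*}
\cE_c(\psi_\theta)-\theta\,\cE_c(\psi)=\frac{1-\theta^2}{2\theta}\|\del_x\psi\|_{L^2}^2,\qquad \|\psi_\theta\|_{L^2}^2=\theta\|\psi\|_{L^2}^2.
\end{align*}
Since $c<0$ and $\gamma\le 0$, the two non-kinetic terms of $\cE_c$ satisfy $\frac{c}{8}\|\psi\|_{L^4}^4\le \cE_c(\psi)$, so combining this with Gagliardo--Nirenberg yields a uniform lower bound $\|\del_x\psi\|_{L^2}\ge \delta_0>0$ for every $\psi\in\scA_m$ satisfying $\cE_c(\psi)<-\nu(c,m)/2$. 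This makes the above identity quantitatively negative, and after taking the infimum over such $\psi$ I obtain $\nu(c,\theta m)>\theta\,\nu(c,m)$ for all $\theta>1$; equivalently, $m'\mapsto \nu(c,m')/m'$ is strictly increasing on $(0,\infty)$, from which the strict subadditivity $\nu(c,m_1+m_2)>\nu(c,m_1)+\nu(c,m_2)$ follows by a standard elementary argument.

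Next I apply Lions' concentration-compactness lemma to $\rho_n\ce |\psi_n|^2$. Vanishing is ruled out because by Lions' vanishing lemma it would force $\|\psi_n\|_{L^4}\to 0$, and hence $\cE_c(\psi_n)\ge -\frac{|c|}{8}\|\psi_n\|_{L^4}^4\to 0$, contradicting $-\nu(c,m)<0$. Dichotomy is excluded via the strict subadditivity above together with the standard splitting argument of Lions. Thus tightness up to translation holds: there exist $\{y_n\}\subset\R$ and $\varphi\in H^1(\R)$ with $\|\varphi\|_{L^2}^2=m$ such that, along a subsequence, $\psi_n(\cdot -y_n)\wto \varphi$ weakly in $H^1(\R)$ and strongly in $L^p(\R)$ for every $p\in(2,\infty)$.

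Finally, weak lower semicontinuity of the positive kinetic and $L^6$ terms combined with the strong $L^4$ convergence gives $\cE_c(\varphi)\le \liminf \cE_c(\psi_n(\cdot -y_n))=-\nu(c,m)$; since $\varphi\in \scA_m$ this forces $\varphi\in \scM_{c,m}$ and convergence of the kinetic norms, which together with convergence of the $L^2$-norms upgrades weak to strong convergence in $H^1(\R)$. The main obstacle is the strict subadditivity step: the defocusing $L^6$-term has the wrong sign under the mass-preserving scalings $\sqrt{\theta}\,\psi$ and $\sqrt{\mu}\,\psi(\mu\cdot)$, so one is forced to use the mass-changing dilation $\psi(\cdot/\theta)$ and then to extract the positive lower bound on the gradient from the negativity of the infimum $-\nu(c,m)$.
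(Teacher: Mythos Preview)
Your proof is correct and follows the classical Cazenave--Lions/Lions concentration-compactness route: establish strict subadditivity of $m'\mapsto\nu(c,m')$ via the spatial dilation $\psi(\cdot/\theta)$, then run Lions' trichotomy to exclude vanishing and dichotomy. The paper takes a somewhat different, more streamlined path: instead of invoking Lions' lemma, it uses Lieb's compactness lemma (Lemma~\ref{lem:5.3}) directly---the lower bound $\inf_n\|\psi_n\|_{L^4}>0$ (which you also derive) guarantees a nontrivial weak limit $\varphi$ after translation. Then, rather than proving strict subadditivity abstractly, the paper applies the Br\'ezis--Lieb splitting \eqref{eq:5.6}--\eqref{eq:5.7} and argues by contradiction that $\|\varphi\|_{L^2}^2=m$: assuming $\|\varphi\|_{L^2}^2<m$, it rescales both the remainder $\xi_n=\varphi_n-\varphi$ and the limit $\varphi$ back to mass $m$ via the \emph{same} dilation you use, obtaining the identity \eqref{eq:5.8}, which is exactly your formula $\cE_c(\psi_\theta)-\theta\cE_c(\psi)=\tfrac{1-\theta^2}{2\theta}\|\del_x\psi\|_{L^2}^2$ rearranged. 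The positive gradient term then forces $-\nu(c,m)>-\nu(c,m)$, a contradiction.

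So the key scaling identity is identical in both arguments; the difference is organizational. Your approach proves subadditivity \emph{a priori} and feeds it into Lions' dichotomy machinery, while the paper's approach (borrowed from \cite{CJS10,CS20}) bypasses the trichotomy entirely and applies the scaling \emph{a posteriori} to the concrete pair $(\varphi,\xi_n)$ produced by Lieb and Br\'ezis--Lieb. The paper's version is shorter and avoids the cutoff technicalities in Lions' dichotomy step; your version is the more classical one and makes the role of subadditivity explicit. One minor point: since $\|\psi_n\|_{L^2}^2\to m$ rather than $=m$, you should either normalize first or note that Lions' lemma applies to sequences with merely convergent mass.
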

For the proof of Proposition \ref{prop:5.2} we use the following Lieb's compactness lemma and Brezis--Lieb's lemma (Lemma \ref{lem:3.2}). We note that the original argument in \cite{CL82} (see also \cite[Chapter 8]{C03}) relies on the concentration compactness method by Lions \cite{Lions84}.  
\begin{lemma}[\cite{L83}]
\label{lem:5.3}
Let $\{f_n\}$ be a bounded sequence in $H^1(\R)$. Assume that there exists $q\in(2,\infty)$ such that $\limsup_{n \to \infty} \norm[f_n]_{L^q}>0$.
Then, there exist $\{y_n\}\subset\R$ and $f \in H^1(\R)\setminus \{0\}$ such that $\{f_n(\cdot-y_n)\}$ has a subsequence that converges to $f$ weakly in $H^1(\R)$. 
\end{lemma}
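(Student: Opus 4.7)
The plan is to adapt the classical proof of Lieb's compactness lemma to dimension one. Boundedness of $\{f_n\}$ in $H^1(\R)$ alone only yields a weakly convergent subsequence whose limit may vanish, so the non-vanishing $L^q$-assumption must be exploited to locate a ``bump'' of $f_n$ which, after a suitable translation, survives in the weak limit.

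First I would pass to a subsequence so that $\|f_n\|_{L^q}\ge\delta>0$ and $\|f_n\|_{H^1}\le M$ hold simultaneously. Partitioning $\R$ into the unit intervals $Q_k=[k,k+1)$ for $k\in\Z$ and using the pointwise bound $|f_n(x)|^{q}\le \|f_n\|_{L^\infty(Q_k)}^{q-2}|f_n(x)|^2$ on each $Q_k$, summation over $k$ gives
\begin{align*}
\delta^{q}\le \|f_n\|_{L^q(\R)}^q\le \Bigl(\sup_{k\in\Z}\|f_n\|_{L^\infty(Q_k)}\Bigr)^{q-2}\|f_n\|_{L^2}^2\le \Bigl(\sup_{k\in\Z}\|f_n\|_{L^\infty(Q_k)}\Bigr)^{q-2}M^2.
\end{align*}
Hence there exist a constant $c>0$, independent of $n$, and integers $k_n\in\Z$ with $\|f_n\|_{L^\infty(Q_{k_n})}\ge c$.

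Setting $y_n=k_n$ and $\tilde f_n(x)=f_n(x+y_n)$, the sequence $\{\tilde f_n\}$ is still bounded in $H^1(\R)$ and satisfies $\|\tilde f_n\|_{L^\infty(Q_0)}\ge c$. Extracting a further subsequence, $\tilde f_n \rightharpoonup f$ weakly in $H^1(\R)$. The one-dimensional compact embedding $H^1(I)\hookrightarrow C(\overline I)$ on any bounded interval $I$ (equivalently, $H^1(I)\hookrightarrow C^{1/2}(\overline I)$ combined with Arzelà--Ascoli) implies that, along a further subsequence, $\tilde f_n\to f$ uniformly on $\overline{Q_0}$, so $\|f\|_{L^\infty(Q_0)}\ge c>0$ and in particular $f\not\equiv 0$. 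The only delicate point is this last passage from weak $H^1$-convergence to non-triviality of the limit: the translations $y_n$ have been chosen precisely so that one-dimensional Rellich compactness applies on the fixed interval $\overline{Q_0}$ rather than on the drifting intervals $\overline{Q_{k_n}}$, which is exactly what prevents the $L^q$-mass from escaping to infinity under weak convergence.
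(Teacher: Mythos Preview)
Your argument is correct and is the standard proof of Lieb's compactness lemma in one dimension. The paper does not supply its own proof of this statement; Lemma~\ref{lem:5.3} is simply quoted from \cite{L83} and then invoked in the proof of Proposition~\ref{prop:5.2}, so there is nothing to compare against. One cosmetic remark: the lemma is phrased with translations $f_n(\cdot-y_n)$ while you set $\tilde f_n(x)=f_n(x+y_n)$, so your $y_n$ is the negative of the one in the statement; this is of course harmless.
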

\begin{proof}[Proof of Proposition \ref{prop:5.2}]
We proceed in three steps.
\\[5pt]
{\bf Step 1:} Boundedness of $\{\psi_n\}$. From the assumption and $-\nu(c,m)<0$, $\cE_c(\psi_n)<-\nu(c,m)/2$ for large $n$.
Combined with \eqref{eq:5.3}, we obtain that
\begin{align*}
-\frac{\nu(c,m)}{2} >\cE_c(\psi_n) \ge \frac{1}{4}\| \del_x\psi_n\|_{L^2}^2 -C_2c^2m^3
\end{align*}
for large $n$. Since $\|\psi_n\|_{L^2}^2\to m$, this yields that $\{ \psi_n\}$ is bounded in $H^1(\R)$. From the definition of $\cE_c$ and $\cE\ge 0$, we have
\begin{align*}
-\frac{\nu(c,m)}{2} >\cE_c (\psi_n) =\cE(\psi_n) -\frac{|c|}{8}\| \psi_n\|_{L^4}^4
\ge -\frac{|c|}{8}\| \psi_n\|_{L^4}^4,
\end{align*}
which implies that 
\begin{align*}
0< \frac{4\nu(c,m)}{|c|}<\| \psi_n\|_{L^4}^4 \quad\text{for large}~n. 
\end{align*}
Thus we have obtained that
\begin{align}
\label{eq:5.4}
\sup_{n\in\N}\|\psi_n\|_{H^1}<\infty,~\inf_{n\in\N}\| \psi_n\|_{L^4}^4>0.
\end{align}
\noindent
{\bf Step 2:} Limits. From \eqref{eq:5.4} one can apply Lemma \ref{lem:5.3} to the sequence $\{ \psi_n\}$. Then there exist $\{y_n\}\subset\R$ and $\varphi\in H^1(\R)\setminus\{ 0\}$ such that a subsequence of $\{ \psi_n(\cdot -y_n)\}$ (we denote it by $\{ \varphi_n\}$) converges to $\varphi$ weakly in $H^1(\R)$.  
By the weak lower semicontinuity of the $L^2$ norm we have
\begin{align}
\label{eq:5.5}
\|\varphi\|_{L^2}^2\le\liminf_{n\to\infty} \| \varphi_n\|_{L^2}^2=\lim_{n\to\infty}\|\psi_n\|_{L^2}^2=m.
\end{align}
We also have, up to a subsequence, $\varphi_n\to\varphi~\text{a.e.}~\text{in}~\R$. Applying Lemma \ref{lem:3.2} we obtain that
\begin{align}
\label{eq:5.6}
&\cE_c(\varphi_n)-\cE_c(\varphi_n-\varphi)-\cE_c (\varphi)\to 0,\\
\label{eq:5.7}
&\| \varphi_n \|_{L^2}^2 -\| \varphi_n-\varphi\|_{L^2}^2 -\| \varphi\|_{L^2}^2\to 0.
\end{align}
\noindent
{\bf Step 3:} Strong convergence. 
Assume that $\|\varphi\|_{L^2}^2<m$. Then, combined with \eqref{eq:5.7} and $\varphi\neq 0$, we have
\begin{align*}
0<\lim_{n\to\infty}\| \varphi_n-\varphi\|_{L^2}^2 <m.
\end{align*}
We set $\xi_n =\varphi_n-\varphi$. Following an idea from \cite{CJS10, CS20}, we modify $\{ \xi_n\}$ and $\varphi$ by using the scaling transformation
\begin{align*}
\til{\xi}_n(x)=\xi_n(\lambda_n^{-1}x),~\til{\varphi}(x)=\varphi(\lambda^{-1}x ),
\end{align*} 
where
\begin{align*}
\lambda_n=\frac{m}{\| \xi_n\|_{L^2}^2},~
\lambda =\frac{m}{\| \varphi\|_{L^2}^2}.
\end{align*}
We note that $\lambda, \lambda_n>1$ and $\til{\xi}_n, \til{\varphi}\in\scA_m$.
By a direct calculation we have 
\begin{align}
\label{eq:5.8}
\begin{aligned}
\cE_c(\varphi)&=\frac{1-\lambda^{-2}}{2}\|\del_x\varphi\|_{L^2}^2+\lambda^{-1}\cE_c(\til{\varphi}),
\\
\cE_c(\xi_n)&=\frac{1-\lambda_n^{-2}}{2}\|\del_x\xi_n\|_{L^2}^2+\lambda_n^{-1}\cE_c(\til{\xi}_n).
\end{aligned}
\end{align}
Then it follows from \eqref{eq:5.6}, \eqref{eq:5.8} and \eqref{eq:5.7} that
\begin{align*}
-\nu(c,m) =\lim_{n\to\infty}\cE_c (\varphi_n)
&=\lim_{n\to\infty} \cE_c(\xi_n)+\cE_c (\varphi)
\\
&=
\begin{aligned}[t]
\lim_{n\to\infty} \Bigl[ \frac{1-\lambda_n^{-2}}{2}\|\del_x\xi_n\|_{L^2}^2&+\frac{1-\lambda^{-2}}{2}\|\del_x\varphi\|_{L^2}^2
\\
&{}+\lambda_n^{-1}\cE_c(\til{\xi}_n)+\lambda^{-1}\cE_c(\til{\varphi}) \Bigr]
\end{aligned}
\\
&\ge \frac{1-\lambda^{-2}}{2}\|\del_x\varphi\|_{L^2}^2 -\nu(c,m)\lim_{n\to\infty}\l( \lambda_n^{-1}+\lambda^{-1} \r)
\\
&=\frac{1-\lambda^{-2}}{2}\|\del_x\varphi\|_{L^2}^2 -\nu(c,m)>-\nu(c,m),
\end{align*}
which gives a contradiction. Therefore we deduce that $m=\|\varphi\|_{L^2}^2$. 

Since we have the relation
\begin{align*}
\lim_{n\to\infty}\| \varphi_n\|_{L^2}^2=m=\|\varphi\|_{L^2}^2,
\end{align*}
we deduce that
$
\varphi_n\to\varphi~\text{in}~L^2(\R)
$.
From boundedness of $\{\varphi_n\}$ in $H^1(\R)$ and elementary interpolation estimates, we have
\begin{align}
\label{eq:5.9}
\varphi_n \to \varphi\quad \text{in}~L^r(\R)\quad\text{for all}~r\in[2,\infty].
\end{align}
Combined with the lower semicontinuity of the $H^1$-norm, we deduce that 
\begin{align*}
\cE_c (\varphi)\le\liminf_{n\to\infty}\cE_c(\varphi_n) =-\nu(c,m).
\end{align*}
On the other hand, it follows from $\varphi\in\scA_m$ that $-\nu(c,m)\le\cE_c (\varphi)$, which yields that $-\nu(c,m) =\cE_c(\varphi)$. Hence $\varphi\in\cM_{c,m}$. By \eqref{eq:5.6} we have $\cE(\varphi_n-\varphi)\to 0$. Combined with \eqref{eq:5.9} we obtain that 
\begin{align*}
\frac{1}{2}\|\del_x\varphi_n-\del_x\varphi\|_{L^2}^2
=\cE_c(\varphi_n-\varphi)-\frac{c}{8}\| \varphi_n-\varphi\|_{L^4}^4
+\frac{\gamma}{32}\| \varphi_n-\varphi\|_{L^6}^6
\to 0,
\end{align*}
which yields that
\begin{align*}
\varphi_n \to \varphi\quad\text{strongly in}~H^1(\R).
\end{align*}
This completes the proof.
\end{proof}
The set $\scM_{c,m}$ is characterized as follows.
\begin{lemma}
\label{lem:5.4}
Assume \eqref{eq:5.1}. Suppose further that
\begin{align}
\label{eq:5.10}
m=\| \varphi_{\omega,c}\|_{L^2}^2=\| \Phi_{\omega,c}\|_{L^2}^2.
\end{align}
Then we have 
\begin{align}
\label{eq:5.11}
\scM_{c,m} =\l\{ e^{i\theta}\Phi_{\omega,c}(\cdot-y) :\theta, y\in\R \r\}
~\text{and}~
-\nu(c,m)=\cE_{c}(\Phi_{\omega,c}).
\end{align}
\end{lemma}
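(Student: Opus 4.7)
The plan is to first establish existence of minimizers via the compactness theorem, then identify every minimizer with the explicit soliton profile $\Phi_{\omega,c}$ through Euler--Lagrange analysis; the equality $-\nu(c,m)=\cE_c(\Phi_{\omega,c})$ will then follow from the translation and phase invariance of $\cE_c$ and $\cM$, which forces $\Phi_{\omega,c}\in\scM_{c,m}$. Existence is immediate: applying Proposition~\ref{prop:5.2} to any minimizing sequence $\{\psi_n\}\subset\scA_m$ for $\cE_c$ (automatically $\|\psi_n\|_{L^2}^2=m>0$ and $\cE_c(\psi_n)\to-\nu(c,m)$), a subsequence of $\{\psi_n(\cdot-y_n)\}$ converges strongly in $H^1$ to some element of $\scM_{c,m}$.

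Given $\psi\in\scM_{c,m}$, the Lagrange multiplier rule yields some $\mu\in\R$ with
\[-\psi''+\mu\psi+\frac{c}{2}|\psi|^2\psi-\frac{3\gamma}{16}|\psi|^4\psi=0.\]
Two integral identities determine $\mu$: the Nehari identity (multiply by $\bar\psi$ and integrate) and the scaling (Pohozaev-type) identity coming from $\frac{d}{d\lambda}\cE_c(\lambda^{1/2}\psi(\lambda\cdot))|_{\lambda=1}=0$, valid because the $L^2$-preserving dilation keeps $\psi_\lambda$ on $\scA_m$. Eliminating $\|\psi\|_{L^6}^6$ between these two identities produces
\[\mu\|\psi\|_{L^2}^2=2\|\del_x\psi\|_{L^2}^2-\frac{c}{8}\|\psi\|_{L^4}^4,\]
and since $c<0$ and $\|\del_x\psi\|_{L^2}^2>0$ (as $\psi$ is a nontrivial $H^1$ function), this forces $\mu>0$.

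I next reduce to a nonnegative real-valued minimizer. By the diamagnetic inequality $\|\del_x|\psi|\|_{L^2}\leq\|\del_x\psi\|_{L^2}$ together with $\||\psi|\|_{L^p}=\|\psi\|_{L^p}$, we have $|\psi|\in\scA_m$ and $\cE_c(|\psi|)\leq\cE_c(\psi)$, so $|\psi|\in\scM_{c,m}$ and equality must hold in the diamagnetic inequality. Its equality case forces $\psi=e^{i\theta_0}|\psi|$ for some constant phase $\theta_0\in\R$, and the nonnegative function $|\psi|$ then solves the autonomous ODE
\[-\varphi''+\mu\varphi+\frac{c}{2}\varphi^3-\frac{3\gamma}{16}\varphi^5=0.\]
A one-dimensional phase-plane argument (any nonnegative $H^1$-solution of such an autonomous second-order ODE is symmetric about its unique maximum and uniquely determined up to translation), combined with the sharp existence range from \cite{BeL83} which places $(\mu+c^2/4,c)$ within \eqref{eq:1.8}, yields $|\psi|=\Phi_{\mu+c^2/4,c}(\cdot-x_0)$ for some $x_0\in\R$.

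The final step matches $\mu$ to $\omega$. The mass constraint reads $\cM(\Phi_{\mu+c^2/4,c})=m=\cM(\Phi_{\omega,c})$; under \eqref{eq:5.1} the quantity $c^2+\gamma(4\tilde\omega-c^2)$ is strictly positive throughout the admissible range of $\tilde\omega$ (which is precisely where $\Phi_{\tilde\omega,c}$ exists), so with $c<0$, Lemma~\ref{lem:2.2} shows that $\tilde\omega\mapsto\cM(\Phi_{\tilde\omega,c})$ is strictly increasing. Therefore $\mu+c^2/4=\omega$, giving $\psi=e^{i\theta_0}\Phi_{\omega,c}(\cdot-x_0)$ and completing \eqref{eq:5.11}. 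I expect the main obstacle to be the classification step, namely pinning down rigorously, via ODE analysis and the ground-state uniqueness theory for the cubic--quintic elliptic equation, that every nonnegative $H^1$-solution must be a translate of $\Phi_{\mu+c^2/4,c}$.
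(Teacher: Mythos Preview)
Your proof is correct and follows essentially the same line as the paper's: existence via Proposition~\ref{prop:5.2}, Euler--Lagrange with positive multiplier, identification of the minimizer with $\Phi_{\tilde\omega,c}$ up to phase and translation, and then matching $\tilde\omega=\omega$ via the strict monotonicity of $\tilde\omega\mapsto\cM(\Phi_{\tilde\omega,c})$ from Lemma~\ref{lem:2.2}. The only difference is that you spell out explicitly (via the Nehari/Pohozaev identities and the diamagnetic reduction) what the paper compresses into the phrases ``one can easily prove that $\lambda>0$'' and ``by uniqueness of the solution of \eqref{eq:5.12}''.
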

\begin{proof}
By Proposition~\ref{prop:5.2} we note that $\cM_{c,m}\neq\emptyset$. Let $\psi\in\scM_{c,m}$. Then there exists a Lagrange multiplier $\lambda\in\R$ such that
\begin{align*}
\cE_{c}'(\psi)+\lambda\cM '(\psi)=0
\iff -\psi'' +\lambda\psi +\frac{c}{2}|\psi|^2\psi-\frac{3}{16}\gamma|\psi|^4\psi=0.
\end{align*}
Since $\psi\neq 0$, one can easily prove that $\lambda>0$. If we set 
$
\tilde{\omega}=\lambda +\frac{c^2}{4}>0,
$
then $\psi$ satisfies the equation
\begin{align}
\label{eq:5.12}
-\psi'' +\l( \tilde{\omega} -\frac{c^2}{4}\r)\psi +\frac{c}{2}|\psi|^2\psi -\frac{3}{16}\gamma|\psi|^4\psi=0,\quad x\in\R.
\end{align}
By uniqueness of the solution of \eqref{eq:5.12}, there exist $\theta,y\in \R$ such that
\begin{align*}
\psi =e^{i\theta}\Phi_{\tilde{\omega},c}(\cdot-y).
\end{align*}
From the assumption we have
\begin{align*}
\| \Phi_{\tilde{\omega},c}\|_{L^2}^2=\| \psi\|_{L^2}^2=m=\| \Phi_{\omega,c}\|_{L^2}^2.
\end{align*}
Since $c<0$, it follows from Lemma~\ref{lem:2.2} that the function
\begin{align*}
 \l(\frac{c^2}{4},\infty \r)\ni \mu\mapsto \| \Phi_{\mu ,c}\|_{L^2}^2\in (0,\infty) 
\end{align*}
is strictly increasing, in particular which implies that $\tilde{\omega}=\omega$. Hence we have $\psi=e^{i\theta}\Phi_{\omega,c}(\cdot-y)$. We also obtain that
\begin{align}
\label{eq:5.13}
-\nu(c,m)=\cE_{c}(\psi)=\cE_{c} (\Phi_{\omega,c}).
\end{align}

Conversely, if $\psi =e^{i\theta}\Phi_{\omega,c}(\cdot-y)$ for some $\theta,y\in \R$, then it follows 
from \eqref{eq:5.10} and \eqref{eq:5.13} that $\psi\in\scM_{c,m}$. This completes the proof.
\end{proof}
Next we prove the following claim on sequential compactness.
\begin{proposition}
\label{prop:5.5}
Assume \eqref{eq:5.1}.
Suppose further that $m$ is defined by \eqref{eq:5.10}. If a sequence $\{\varphi_n\}\subset H^1(\R)$ satisfies
\begin{align*}
\cE(\varphi_n)\to\cE(\varphi_{\omega,c}),~\cP(\varphi_n)\to
\cP(\varphi_{\omega,c}),~\cM(\varphi_n)\to \cM(\varphi_{\omega,c}),
\end{align*}
then there exist a subsequence of $\{\varphi_n\}$ $($still denoted by the same letter$)$ and $\{ \theta_n\}, \{ y_n\}\subset\R$ such that
\begin{align*}
e^{i\theta_n}\varphi_n(\cdot-y_n) \to \varphi_{\omega,c}\quad\text{strongly in}~H^1(\R).
\end{align*} 
\end{proposition}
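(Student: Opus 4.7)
The plan is to reduce to the constrained-minimization compactness result, Proposition~\ref{prop:5.2}, via the gauge $\psi_n(x) \ce e^{-\frac{i}{2}cx}\varphi_n(x)$, which transports the sequence $\{\varphi_n\}$ onto the $\Phi$-side where the functional $\cE_c$ lives. First I would verify that $\{\psi_n\}$ (reduced to its eventually-nonzero tail) is a minimizing sequence for $-\nu(c,m)$ on the mass level $m$. Since $|\psi_n| = |\varphi_n|$ pointwise, $\|\psi_n\|_{L^2}^2 = \cM(\varphi_n) \to m$. From identity~\eqref{eq:5.2} applied to $\psi_n$,
\[
\cE_c(\psi_n) \;=\; \cS_{\omega,c}(\varphi_n) \;-\; \tfrac{1}{2}\bigl(\omega - \tfrac{c^2}{4}\bigr)\cM(\varphi_n),
\]
and using convergence of $\cE(\varphi_n)$, $\cM(\varphi_n)$, $\cP(\varphi_n)$ to the corresponding values on $\varphi_{\omega,c}$, the right-hand side converges to $\cS_{\omega,c}(\varphi_{\omega,c}) - \tfrac12(\omega-c^2/4)m$. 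Applying~\eqref{eq:5.2} again with $\psi = \Phi_{\omega,c}$ and using $m = \|\Phi_{\omega,c}\|_{L^2}^2$ identifies this value as $\cE_c(\Phi_{\omega,c})$, which equals $-\nu(c,m)$ by Lemma~\ref{lem:5.4}.

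Next I would apply Proposition~\ref{prop:5.2} to $\{\psi_n\}$: this furnishes $\{y_n\}\subset\R$ and some $\varphi_\star \in \scM_{c,m}$ such that, along a subsequence, $\psi_n(\cdot - y_n) \to \varphi_\star$ strongly in $H^1(\R)$. By Lemma~\ref{lem:5.4} we may write $\varphi_\star = e^{i\theta_0}\Phi_{\omega,c}(\cdot - y_0)$ for some $\theta_0, y_0 \in \R$.

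To conclude, I would undo the gauge. Multiplication by the bounded $C^\infty$ function $e^{\frac{i}{2}cx}$ is continuous on $H^1(\R)$, and direct computation gives the pointwise identities
\[
e^{\frac{i}{2}cx}\psi_n(x-y_n) = e^{\frac{i}{2}cy_n}\varphi_n(x-y_n), \qquad e^{\frac{i}{2}cx}\Phi_{\omega,c}(x-y_0) = e^{\frac{i}{2}cy_0}\varphi_{\omega,c}(x-y_0).
\]
Multiplying the convergence $\psi_n(\cdot-y_n) \to e^{i\theta_0}\Phi_{\omega,c}(\cdot-y_0)$ by $e^{\frac{i}{2}cx}$ and then shifting by $y_0$ (using translation-invariance of the $H^1$ norm) yields
\[
e^{i\theta_n}\varphi_n(\cdot - \tilde y_n) \to \varphi_{\omega,c} \quad \text{in } H^1(\R),
\]
with $\tilde y_n = y_n - y_0$ and $\theta_n = \tfrac{c}{2}(y_n - y_0) - \theta_0$, which is precisely the claimed convergence.

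The proof is essentially mechanical once Proposition~\ref{prop:5.2} is available; the only delicate point is the bookkeeping of phases and translations when passing between the $\psi$-frame and the $\varphi$-frame, together with the verification that the limiting energy level $\cE_c(\Phi_{\omega,c})$ coincides with $-\nu(c,m)$, which is exactly the content built into Lemma~\ref{lem:5.4}.
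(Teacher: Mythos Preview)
Your proof is correct and follows essentially the same approach as the paper: set $\psi_n=e^{-\frac{i}{2}cx}\varphi_n$, use identity~\eqref{eq:5.2} together with Lemma~\ref{lem:5.4} to recognize $\{\psi_n\}$ as a minimizing sequence at level $-\nu(c,m)$, apply Proposition~\ref{prop:5.2}, and then undo the gauge. Your phase and translation bookkeeping is accurate, and the remark about passing to the eventually-nonzero tail is a harmless technical point since $\cM(\varphi_n)\to m>0$.
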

\begin{proof}
We first note that  
\begin{align}
\label{eq:5.14}
\cE_c(e^{-\frac{i}{2}cx}\psi)=\cS_{\omega ,c}(\psi)-\frac{1}{2}\l(\omega-\frac{c^2}{4}\r)\| \psi\|_{L^2}^2
\quad\text{for}~\psi\in H^1(\R),
\end{align}
which follows from \eqref{eq:5.2}. If we set $\varphi=\varphi_{\omega ,c}$, we have
\begin{align}
\label{eq:5.15}
\cE_c(e^{-\frac{i}{2}cx}\varphi_{\omega,c})=d(\omega ,c)
-\frac{1}{2}\l(\omega-\frac{c^2}{4}\r)\| \varphi_{\omega ,c}\|_{L^2}^2.
\end{align}
From the assumption we have
\begin{align*}
\cS_{\omega,c}(\varphi_n)\to \cS_{\omega,c}(\varphi_{\omega,c})=d(\omega,c).
\end{align*}
Combined with \eqref{eq:5.14} and \eqref{eq:5.15}, we have
\begin{align*}
\cE_{c}(e^{-\frac{i}{2}cx}\varphi_n) \to \cE_c(e^{-\frac{i}{2}cx}\varphi_{\omega,c})=\cE_c(\Phi_{\omega ,c})
=-\nu (c,m),
\end{align*}
where we used \eqref{eq:2.2} and \eqref{eq:5.11}.
Therefore, by Proposition~\ref{prop:5.2} and Lemma~\ref{lem:5.4}, there exist $\{z_n\}\subset\R$ and $\theta,y\in\R$ such that up to a subsequence,
\begin{align*}
e^{-\frac{i}{2}c(\cdot-z_n)}\varphi_n(\cdot -z_n)\to e^{i\theta}\Phi_{\omega,c}(\cdot -y) \quad\text{strongly in}~H^1(\R).
\end{align*}
This yields that
\begin{align*}
e^{-\frac{i}{2}c(y-z_n)-i\theta}\varphi_n(\cdot+y -z_n)\to e^{\frac{i}{2}c\cdot}\Phi_{\omega,c}=\varphi_{\omega,c} \quad\text{strongly in}~H^1(\R),
\end{align*}
which completes the proof.
\end{proof}
We are now in a position to prove the following stability result.
\begin{theorem}
\label{thm:5.6}
Assume \eqref{eq:5.1}. Then the soliton $v_{\omega,c}$ of \eqref{ME} is stable.
\end{theorem}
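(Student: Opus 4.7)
The plan is to follow the classical Cazenave--Lions strategy and wrap Proposition \ref{prop:5.5} into a contradiction argument, in exactly the same skeleton as the proof of Theorem \ref{thm:4.5}. I would assume $v_{\omega,c}$ is not stable: then there exist $\varepsilon_1>0$, a sequence of maximal $H^1$-solutions $\{v_n\}$ of \eqref{ME}, and times $\{t_n\}\subset\R$ such that $\|v_n(0)-\varphi_{\omega,c}\|_{H^1}\to 0$ while
\begin{align*}
\inf_{(\theta,y)\in\R^2}\|v_n(t_n)-e^{i\theta}\varphi_{\omega,c}(\cdot-y)\|_{H^1}\geq\varepsilon_1.
\end{align*}

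First I would handle existence. Under the standing assumption $b\leq-\tfrac{3}{16}$, every $H^1$-solution of \eqref{eq:1.1} is global (see the footnote to \eqref{eq:1.19}); transported through the gauge transformation $\cG$, the same holds for \eqref{ME}, so each $v_n(t_n)$ is well defined. By continuity of the flow in $H^1(\R)$, I would then replace $t_n$ by the first time at which the infimum above actually equals $\varepsilon_1$. The triangle inequality applied to a near-optimal $(\theta_n',y_n')$ in this infimum forces $\{v_n(t_n)\}$ to be bounded in $H^1(\R)$.

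Next I would feed $\{v_n(t_n)\}$ into Proposition \ref{prop:5.5}. Since $\cE$, $\cM$, and $\cP$ are conserved along the flow of \eqref{ME}, and all three are continuous on bounded subsets of $H^1(\R)$ (the derivative and $L^4$ terms in $\cP$ being controlled by one-dimensional Sobolev embedding), the convergences
\begin{align*}
\cE(v_n(t_n))\to\cE(\varphi_{\omega,c}),\quad
\cM(v_n(t_n))\to\cM(\varphi_{\omega,c}),\quad
\cP(v_n(t_n))\to\cP(\varphi_{\omega,c})
\end{align*}
come for free from $v_n(0)\to\varphi_{\omega,c}$ in $H^1(\R)$. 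Proposition \ref{prop:5.5} then supplies sequences $\{\theta_n\},\{y_n\}\subset\R$ along which, after passing to a subsequence, $e^{i\theta_n}v_n(t_n,\cdot-y_n)\to\varphi_{\omega,c}$ strongly in $H^1(\R)$, contradicting the normalized lower bound at time $t_n$.

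I do not expect any genuine obstacle in this last step: the whole scheme is really a delivery mechanism for Proposition \ref{prop:5.5}. The substantive content already sits upstream --- solving the mass-constrained minimization problem for $\cE_c$ (Proposition \ref{prop:5.2}) and identifying its minimizers with the soliton profiles (Lemma \ref{lem:5.4}). A small simplification relative to Section \ref{sec:4} is that in the regime $b\leq-\tfrac{3}{16}$ no algebraic solitons are present, so one can stay inside $H^1(\R)$ throughout and avoid both the larger space $X_{\omega,c}$ and the $L^4$-remainder terms that cluttered Proposition \ref{prop:4.4}. Finally, the corresponding statement for $u_{\omega,c}$ of \eqref{eq:1.1} then follows from the Lipschitz continuity of $\cG$ on bounded sets of $H^1(\R)$, exactly as in the deduction of Theorem \ref{thm:1.2} from Theorem \ref{thm:4.5}.
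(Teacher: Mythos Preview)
Your proposal is correct and follows essentially the same contradiction argument as the paper: assume instability, use conservation of $\cE$, $\cM$, $\cP$ to feed $\{v_n(t_n)\}$ into Proposition~\ref{prop:5.5}, and obtain a subsequence converging to $\varphi_{\omega,c}$ modulo symmetries, contradicting the lower bound. The one difference is that you insert the normalization step (replacing $t_n$ so that the orbital distance equals $\varepsilon_1$, then extracting $H^1$-boundedness); the paper omits this here because, unlike in Theorem~\ref{thm:4.5}, Proposition~\ref{prop:5.5} does not require boundedness as a hypothesis---it is recovered internally via Proposition~\ref{prop:5.2}---so the contradiction follows directly from the conservation laws without any intermediate control on $\|v_n(t_n)\|_{H^1}$.
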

\begin{proof}
For completeness we give a proof. Assume by contradiction that  there exist $\eps >0$, a sequence of the maximal solutions $\{v_n\}$ to \eqref{ME} and a sequence $\{ t_n\}\subset\R$ such that 
\begin{align}
\label{eq:5.16}
\| v_n(0)-\varphi_{\omega,c} \|_{H^1} \underset{n\to\infty}{\longrightarrow} 0,\\
\label{eq:5.17}
\inf_{(\theta ,y)\in\R^2}\| v_n(t_n)-e^{i\theta}\varphi_{\omega ,c}(\cdot -y)\|_{H^1}\geq\eps .
\end{align}
From conservation laws and \eqref{eq:5.16}, we have
\begin{align*}
\begin{aligned}
\cE(v_n (t_n))=\cE(v_n(0))&\to\cE(\varphi_{\omega ,c}),
\\
\cM(v_n (t_n))=\cM(v_n(0))&\to\cM(\varphi_{\omega ,c}),
\\
\cP(v_n (t_n))=\cP(v_n(0))&\to\cP(\varphi_{\omega ,c}).
\end{aligned}
\end{align*}
Therefore, by Proposition \ref{prop:5.5}, there exist a subsequence of $\{ v_n(t_n)\}$ (still denoted by the same letter) and $\{ \theta_n\}, \{ y_n\}\subset\R$ such that
\begin{align*}
v_n(t_n)-e^{i\theta_n}\varphi_{\omega,c}(\cdot -y_n)\to 0\quad\text{strongly in}~H^1(\R),
\end{align*}
which contradicts \eqref{eq:5.17}.
\end{proof}
\begin{proof}[Proof of Theorem \ref{thm:1.4}]
Similarly as in the proof of Theorem \ref{thm:1.2}, the result follows from Theorem \ref{thm:5.6} and the properties of the gauge transformation $u\mapsto \cG(u)$.
\end{proof}
Our proof in this section still works for the case $b>-\frac{3}{16}$ and 
$ -2\sqrt{\omega} <c<0$. For this case we note that
\begin{align*}
0<\| \varphi_{\omega ,c}\|_{L^2}^2 <\| \varphi_{\omega ,0}\|_{L^2}^2=\frac{2\pi}{\sqrt{\gamma}},
\end{align*}
which follows from Lemma \ref{lem:2.1}. By the sharp Gagliardo--Nirenberg inequality
\begin{align*}
\frac{\gamma}{32}\| f\|_{L^6}^6 \le \frac{1}{2}\| \del_x f\|_{L^2}^2\cdot
\l( \frac{ \sqrt{\gamma} }{2\pi}\| f\|_{L^2}^2\r)^2,
\end{align*}
one can prove that $-\infty <-\nu (c,m)<0$ for $m\in (0,\tfrac{2\pi}{\sqrt{\gamma}})$. Other parts in the proof work without any changes. We note that the condition $m\in (0,\tfrac{2\pi}{\sqrt{\gamma}})$ is essential to prove $-\infty <-\nu (c,m)$, so that we need to restrict our approach to the case of negative velocity. 
%

\section*{Acknowledgments}
The results of this paper were mostly obtained when the author was a PhD student at Waseda University. The author would like to thank his thesis adviser Tohru Ozawa for constant encouragements.
The author is also grateful to Masahito Ohta for fruitful discussions, and to Noriyoshi Fukaya for helpful comments on the first manuscript. This work was supported by JSPS KAKENHI Grant Numbers JP17J05828, JP19J01504, and Top Global University Project, Waseda University.


\end{document}